\crefname{section}{Section}{Sections}
\renewenvironment{abstract}
{\small\vspace{-1em}
\begin{center}
\bfseries\abstractname\vspace{-.5em}\vspace{0pt}
\end{center}
\list{}{
\setlength{\leftmargin}{0.6in}%
\setlength{\rightmargin}{\leftmargin}}%
\item\relax}
{\endlist}
\declaretheorem[name=Theorem,     refname={Theorem,Theorems},         numberwithin=section]{theorem}
\declaretheorem[name=Lemma,       refname={Lemma,Lemmas},             sibling=theorem]{lemma}
\declaretheorem[name=Corollary,   refname={Corollary,Corollaries},    sibling=theorem]{corollary}
\declaretheorem[name=Conjecture,  refname={Conjecture,Conjectures},   sibling=theorem]{conjecture}
\declaretheorem[name=Claim,       refname={Claim,Claims},             sibling=theorem]{claim}
\def\cqedsymbol{\ifmmode$\lrcorner$\else{\unskip\nobreak\hfil
\penalty50\hskip1em\null\nobreak\hfil$\lrcorner$
\parfillskip=0pt\finalhyphendemerits=0\endgraf}\fi} 
\newcommand{\cqed}{\renewcommand{\qed}{\cqedsymbol}}
\def\eps{\varepsilon}
\newcommand{\leqnomode}{\tagsleft@true}
\newcommand{\reqnomode}{\tagsleft@false}
\newcommand\eqdef{\overset{\text{\tiny{def}}}{=}} 
\newcommand{\card}[1]{\left|#1\right|}
\newcommand{\fvs}{\mathrm{fvs}\xspace}
\newcommand{\tw}{\mathrm{tw}\xspace}
\newcommand{\ocp}{\mathrm{ocp}\xspace}
\newcommand{\cp}{\mathrm{cp}\xspace}
\newcommand{\icp}{\mathrm{icp}\xspace}
\newcommand{\iocp}{\mathrm{iocp}\xspace}
\newcommand{\mis}{\textsc{Maximum Independent Set}\xspace}
\newcommand{\smis}{\textsc{MIS}\xspace}
\let\le\leqslant
\let\ge\geqslant
\let\geq\geqslant
\title{Sparse graphs with bounded induced cycle packing number have logarithmic treewidth \thanks{This is the full version of a paper which was presented at the 2023 Annual ACM-SIAM Symposium on Discrete Algorithms (SODA 2023) \cite{soda}.}}
\author[1]{Marthe Bonamy\thanks{Supported by ANR project DISTANCIA (Metric Graph Theory, ANR-17-CE40-0015).}}
\author[2]{\'{E}douard Bonnet\thanks{É. B., H. D., L. E., C. G., and S. T. were supported by the ANR projects TWIN-WIDTH (ANR-21-CE48-0014-01) and Digraphs (ANR-19-CE48-0013-01).}}
\author[2]{Hugues D\'{e}pr\'{e}s}
\author[3]{Louis Esperet\thanks{Partially supported by LabEx
  PERSYVAL-lab (ANR-11-LABX-0025).}}
\author[2]{Colin Geniet}
\author[4]{Claire Hilaire\thanks{Partially supported by Slovenian Research and Innovation Agency (research project J1-4008).}}
\author[2]{St\'{e}phan Thomass\'{e}}
\author[5]{Alexandra Wesolek \thanks{Supported by the Vanier Canada Graduate Scholarships program and by the Deutsche Forschungsgemeinschaft (DFG, German Research Foundation) under Germany's Excellence Strategy – The Berlin Mathematics Research Center MATH+ (EXC-2046/1, project ID: 390685689).}}
\affil[1]{CNRS, LaBRI, Université de Bordeaux, Bordeaux, France.}
\affil[2]{Univ Lyon, CNRS, ENS de Lyon, Université Claude Bernard Lyon 1, LIP UMR5668, France.}
\affil[3]{CNRS, G-SCOP, Université Grenoble Alpes, Grenoble, France.}
\affil[4]{FAMNIT, University of Primorska, Slovenia.}
\affil[5]{Technische Universität Berlin, Berlin, Germany.}
\date{\today}
\begin{document}

\maketitle

\begin{abstract}
A graph is $\mathcal{O}_k$-free if it does not contain $k$ pairwise vertex-disjoint and non-adjacent cycles. We prove that "sparse" (here, not containing large complete bipartite graphs as subgraphs) $\mathcal{O}_k$-free graphs have treewidth (even, feedback vertex set number) at most logarithmic in the number of vertices. This is optimal, as there is an infinite family of $\mathcal{O}_2$-free graphs without $K_{2,3}$ as a subgraph and whose treewidth is (at least) logarithmic.

Using our result, we show that \textsc{Maximum Independent Set} and \textsc{3-Coloring} in $\mathcal{O}_k$-free graphs can be solved in quasi-polynomial time.
Other consequences include that most of the central NP-complete problems (such as \textsc{Maximum Independent Set}, \textsc{Minimum Vertex Cover}, \textsc{Minimum Dominating Set}, \textsc{Minimum Coloring}) can be solved in polynomial time in sparse $\mathcal{O}_k$-free graphs, and that deciding the $\mathcal{O}_k$-freeness of sparse graphs is polynomial time solvable. 
\end{abstract}


\section{Introduction}\label{sec:intro}

Two vertex-disjoint subgraphs $H$ and $H'$ in a graph $G$ are \emph{independent} if there is no edge between $H$ and $H'$ in $G$. 
\emph{Independent cycles} 
are simply vertex-disjoint cycles that are pairwise independent. 
Let $\mathcal{O}_k$ denote the family of all graphs consisting of the disjoint union of $k$ cycles. We say that a graph is \emph{$\mathcal{O}_k$-free} if it does not contain any graph of $\mathcal{O}_k$ as an induced subgraph. Equivalently, a graph $G$ is $\mathcal{O}_k$-free
if it does not contain $k$ independent induced cycles, 
or (equivalently), if $G$ does not contain $k$ independent  cycles.
These graphs can equivalently be defined in terms of forbidden induced subdivisions. 
Letting $T_k$ be the disjoint union of $k$ triangles, a graph is $\mathcal{O}_k$-free if and only if it does not contain an induced subdivision of $T_k$.

\medskip

A \emph{feedback vertex set} is a set of vertices whose removal yields a forest.
Our main technical contribution is the following.
\begin{restatable}{theorem}{mainthm}
  \label{thm:mainthm}
  Every $\mathcal{O}_k$-free graph on $n$ vertices that does not contain $K_{t,t}$ as a~subgraph has a~feedback vertex set of size $O_{t,k}(\log n)$.
\end{restatable}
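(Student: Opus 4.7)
The plan is to prove the theorem in two stages: first establish $\tw(G) = O_{t,k}(\log n)$ via a recursive balanced-separator argument, then derive the feedback vertex set bound from any tree decomposition of this width using the Helly property of subtrees together with $\mathcal{O}_k$-freeness.

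For the treewidth bound, I aim to find in every induced subgraph $H \subseteq G$ a balanced separator of constant size $s = s(t,k)$; applying the standard recursion $\tw(G) \leq s + \tw(G[V'])$ with $|V'| \leq n/2$ then yields $\tw(G) \leq s \log n$. To find such a separator I would perform a BFS from a carefully chosen root, producing layers $L_0, \ldots, L_d$. Induced cycles confined to layers $L_i$ and $L_j$ with $|i - j| \geq 2$ are pairwise vertex-disjoint and non-adjacent, hence independent; so by $\mathcal{O}_k$-freeness all but $O(k)$ layers induce forests. Inside each cycle-active layer, the $K_{t,t}$-subgraph-freeness controls neighborhoods and, together with $\mathcal{O}_k$-freeness, should allow extraction of a constant-size cut.

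The reduction from treewidth to feedback vertex set is clean. Given a tree decomposition $(T, \{B_t\}_{t \in T})$ of width $w = O_{t,k}(\log n)$, every induced cycle $C$ of $G$ corresponds to a subtree $T_C \subseteq T$ consisting of those nodes whose bag meets $V(C)$; this is connected since $C$ is. If two such subtrees are node-disjoint, then the corresponding induced cycles $C, C'$ are vertex-disjoint in $G$ (no vertex of $G$ indexes bags in both $T_C$ and $T_{C'}$) and non-adjacent in $G$ (an edge between them would live in a bag common to $T_C$ and $T_{C'}$). Thus $k$ pairwise node-disjoint subtrees would yield $k$ independent induced cycles, contradicting $\mathcal{O}_k$-freeness. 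The intersection graph of $\{T_C\}$ is chordal (Gavril's theorem on intersection graphs of subtrees) and has independence number less than $k$; since chordal graphs are perfect, its clique cover number is also less than $k$. By the Helly property of subtrees of a tree, the members of each clique share a common node, yielding at most $k-1$ nodes $t_1, \ldots, t_{k-1} \in T$ such that every $T_C$ contains one of them. The set $X = B_{t_1} \cup \ldots \cup B_{t_{k-1}}$ meets every induced cycle of $G$; as every cycle contains an induced cycle, $X$ is a feedback vertex set of size at most $(k-1)(w+1) = O_{t,k}(\log n)$.

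The principal obstacle is achieving balanced separators of truly constant size in the first stage. A naive Lipton--Tarjan-type argument on BFS layers would only give separators of size $O_{t,k}(\log n)$, leading to treewidth $O_{t,k}(\log^2 n)$, which is not enough. Obtaining constant size requires exploiting $\mathcal{O}_k$-freeness and $K_{t,t}$-subgraph-freeness in tandem and arguing that cycle-active BFS layers admit structured decompositions whose analysis yields cuts bounded solely in terms of $t$ and $k$. This delicate combinatorial lemma is where the bulk of the technical work will concentrate.
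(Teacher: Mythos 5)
Your second stage (treewidth $\Rightarrow$ FVS) is correct and is a nice argument: induced cycles trace out subtrees of the decomposition tree, disjoint subtrees give independent induced cycles, so the (chordal, hence perfect) intersection graph of these subtrees has independence number, and therefore clique cover number, less than $k$; by the Helly property each clique pins down one node of $T$, and the union of the corresponding $k-1$ bags hits every induced cycle and hence every cycle. This direction is the reverse of the paper's (which bounds the FVS directly and gets treewidth for free), and it would indeed convert any $O_{t,k}(\log n)$ treewidth bound into an $O_{t,k}(\log n)$ FVS bound.

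The problem is that your first stage, which carries the entire weight of the theorem, is not only unproved but rests on a claim that is false. The paper's own lower-bound construction $G_k$ (Section 2) is $\mathcal{O}_2$-free and $K_{2,3}$-subgraph-free, yet it has no balanced separator of constant size: it is a path $u_1,\dots,u_{2^k-1}$ plus vertices $v_0,\dots,v_{k-1}$, where $v_i$ is joined to every $u_j$ whose index has $2$-order $i$. If a separator $S$ omits $v_i$, then every subpath of the $u$-path of length at least $2^{i+1}$ that survives in $G_k-S$ contains a neighbour of $v_i$ and hence lies in the component of $v_i$; a short count shows this component exceeds $\tfrac{2}{3}n$ unless $i \gtrsim k-\log_2|S|$. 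Consequently any balanced separator must contain $v_0,\dots,v_{k-\log_2|S|-O(1)}$, forcing $|S|=\Omega(\log n)$. So no constant $s(t,k)$ exists even for $k=2$, $t=3$, and your recursion can only yield treewidth $O(\log^2 n)$, which (as you note yourself) does not suffice. The BFS-layer analysis has the additional difficulty that cycles are not confined to single layers, but the separator obstruction above is already fatal. The paper avoids separators entirely: it shows that a reduced sparse $\mathcal{O}_k$-free graph always contains a vertex of degree linear in its cycle rank, deletes that vertex to shrink the cycle rank by a constant factor, and iterates $O(\log n)$ times; proving the existence of such a ``rich'' vertex is where all the structural work (shortest cycle, its neighbourhood, and the forest structure of the rest) goes. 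Some replacement for that core argument is needed; your proposal does not yet contain one.
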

Since a graph with a feedback vertex set of size $k$ has treewidth at most $k+1$,
this implies a corresponding result on treewidth.

\begin{restatable}{corollary}{maintw}
  \label{cor:main-treewidth}
  Every $\mathcal{O}_k$-free graph on $n$ vertices that does not contain $K_{t,t}$ as a subgraph has treewidth $O_{t,k}(\log n)$.
\end{restatable}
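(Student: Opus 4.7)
The plan is to derive the corollary directly from Theorem~\ref{thm:mainthm} via the classical inequality $\tw(G) \leq \fvs(G) + 1$, where $\fvs(G)$ denotes the size of a minimum feedback vertex set of $G$. Once this inequality is in hand, the conclusion follows by a single invocation of the main theorem.

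To justify the inequality, I would argue as follows. Let $S$ be a feedback vertex set of $G$ of size $s$, so that $G - S$ is a forest and therefore admits a tree decomposition $(T, \{X_u\}_{u \in V(T)})$ of width at most $1$. Form a new decomposition by replacing each bag $X_u$ with $X_u \cup S$. Every vertex of $G$ still appears in some bag, and for every vertex the set of bags containing it still induces a connected subtree of $T$ (this is clear for vertices in $V(G)\setminus S$, and trivial for vertices in $S$ since they now appear in every bag). Every edge of $G$ is either an edge of $G - S$, covered by the original decomposition, or incident to a vertex of $S$, covered because every bag now contains $S$ in its entirety. The width of the new decomposition is at most $1 + s$.

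Combining this bound with Theorem~\ref{thm:mainthm} gives $\tw(G) \leq \fvs(G) + 1 = O_{t,k}(\log n)$, as required. There is no genuine obstacle here: the corollary is essentially a syntactic consequence of Theorem~\ref{thm:mainthm}, with all the substantive work concentrated in the main theorem's logarithmic bound on the feedback vertex set.
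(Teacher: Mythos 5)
Your proposal is correct and matches the paper exactly: the paper likewise derives the corollary from Theorem~\ref{thm:mainthm} by noting that a graph with a feedback vertex set of size $s$ has treewidth at most $s+1$, which you have verified with the standard argument of adding the feedback vertex set to every bag of a width-$1$ decomposition of the remaining forest.
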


\Cref{cor:main-treewidth} implies that a number of fundamental problems, such as \textsc{Maximum Independent Set}, \textsc{Minimum Vertex Cover}, \textsc{Minimum Dominating Set}, \textsc{Minimum Colo}\-\textsc{ring}, can be solved in polynomial time in "sparse" $\mathcal{O}_k$-free graphs. Before we elaborate on the algorithmic consequences of our results, we mention that our work is related to an ongoing project  devoted to unraveling an \emph{induced} version of the grid minor theorem of Robertson and Seymour~\cite{RS86}. 
This theorem implies that every graph not containing a subdivision of a $k \times k$ wall as a subgraph has treewidth at most $f(k)$, for some function $f$. 
This result had a deep impact in algorithmic graph theory since many natural problems are tractable in graphs of bounded treewidth. 

Now, what are the forbidden \emph{induced} subgraphs in graphs of small treewidth?
It is clear that large cliques, complete bipartite graphs, subdivided walls, and line graphs of subdivided walls shall be excluded.
It was actually suggested that in graphs with no $K_{t,t}$ subgraphs, the absence of induced subdivisions of large walls and their line graphs might imply bounded treewidth, but counterexamples were found~\cite{Sintiari21,Dav22,Tro22}. 
However, Korhonen recently showed that this absence suffices within bounded-degree graphs~\cite{Korhonen22}.
Abrishami et al.~\cite{Abrishami22} proved that a vertex with at least two neighbors on a hole (i.e., an induced cycle of length at least four) is also necessary in a counterexample. 
Echoing our main result, it was proven that (triangle,theta)-free graphs have logarithmic treewidth~\cite{ACH22}, where a \emph{theta} is made of three paths each on at least two edges between the same pair of vertices. The interested reader is referred to \cite{ACHS23a,ACHS23b} for more recent development on the ongoing project.

As we shall see, the class of $\mathcal{O}_2$-free graphs that do not contain $K_{3,3}$ as a subgraph
has unbounded treewidth. 
Since these graphs do not contain as an induced subgraph a subdivision of a large wall or its line graph, they constitute yet another family of counterexamples. 

We leave as an open question whether $\mathcal{O}_k$-free graphs that do not contain $K_{t,t}$ as a subgraph have bounded \emph{twin-width}, that is, if there is a function $f: \mathbb N \times \mathbb N \to \mathbb N$ such that their twin-width is at most~$f(t,k)$, and refer the reader to~\cite{twin-width1} for a definition of twin-width.

\subsection*{Algorithmic motivations and consequences}

A natural approach to tackle NP-hard graph problems is to consider them on restricted classes.
A simple example is the case of forests, that is, graphs \emph{without cycles}, on which most hard problems become tractable.
The celebrated Courcelle's theorem~\cite{Courcelle90} generalizes that phenomenon to graphs of bounded treewidth and problems expressible in monadic second-order logic.

For the particular yet central \mis (\smis, for short), the mere absence of \emph{odd} cycles makes the problem solvable in polynomial time.
Denoting by $\ocp(G)$ (for odd cycle packing) the maximum cardinality of a collection of vertex-disjoint odd cycles in~$G$, the classical result that \smis is polytime solvable in bipartite graphs corresponds to the $\ocp(G)=0$ case.
Artmann et al.~\cite{Artmann17} extended the tractability of~\smis to graphs $G$ satisfying $\ocp(G) \leqslant 1$.
One could think that such graphs are close to being bipartite, in the sense that the removal of a few vertices destroys all odd cycles.
This is not necessarily true: Adding to an $n \times n$ grid the edges between $(1,i)$ and $(n,n+1-i)$, for every $i=1, \ldots, n$, yields a graph $G$ with $\ocp(G)=1$ such that no removal of less than $n$ vertices make $G$ bipartite; also see the \emph{Escher wall} in~\cite{Reed99}.   

It was believed that Artmann et al.'s result could even be lifted to graphs with bounded odd cycle packing number.
Conforti et al.~\cite{Conforti20} proved it on graphs further assumed to have bounded genus, and Fiorini et al. \cite{Fiorini21} confirmed the general conjecture for graphs with bounded odd cycle packing number.
A polynomial time approximation scheme (PTAS), due to Bock et al.~\cite{Bock14}, was known for~\smis in the (much) more general case of $n$-vertex graphs $G$ such that $\ocp(G)=o(n/\log n)$. 

Similarly let us denote by $\cp(G), \icp(G), \iocp(G)$ the maximum cardinality of a collection of vertex-disjoint cycles in $G$ that are unconstrained, independent, and independent and of odd length, respectively (for cycle packing, induced cycle packing, and induced odd cycle packing).
The Erd\H{o}s-P\'osa theorem~\cite{EP65} states that graphs $G$ with $\cp(G)=k$ admit a feedback vertex set (i.e., a subset of vertices whose removal yields a forest) of size $O(k \log k)$, hence have treewidth $O(k \log k)$.
Thus, graphs with bounded cycle packing number allow polynomial time algorithms for a wide range of problems.



However, graphs with bounded feedback vertex set are very restricted.
This is a motivation to consider the larger classes for which solely the induced variants $\icp$ and $\iocp$ are bounded. 
Graphs with $\iocp \leqslant 1$ have their significance since they contain all the complements of disk graphs~\cite{BonnetG0RS18} and all the complements of unit ball graphs~\cite{BonamyBBCT18}.  
Concretely, the existence of a polynomial time algorithm for \smis on graphs with $\iocp \leqslant 1$ ---an intriguing open question--- would solve the long-standing open problems of whether \textsc{Maximum Clique} is in~P for disk graphs and unit ball graphs.
Currently only efficient PTASes are known~\cite{BBB21}, even when only assuming that $\iocp \leqslant 1$ and that the solution size is a positive fraction of the total number of vertices~\cite{DP21}. 
Let us mention that recognizing the class of graphs $G$ satisfying $\iocp(G) \leqslant 1$ is NP-complete~\cite{Golovach12}.

We have seen that graphs with bounded $\cp, \ocp, \iocp$ have been studied in close connection with solving \smis (or a broader class of problems), respectively forming the Erd\H{o}s-P\'osa theory, establishing a far-reaching generalization of total unimodularity, and improving the approximation algorithms for~\textsc{Maximum Clique} on some geometric intersection graph classes.
Relatively less attention has been given to $\icp$. As a graph $G$ satisfies $\icp(G) < k$ if and only if it is $\mathcal{O}_k$-free, our results (and their algorithmic consequences) are precisely about graphs with bounded induced cycle packing, with a particular focus on the sparse case.



\medskip

So, what can be said about the complexity of classical optimization problems for $\mathcal{O}_k$-free graphs?
Even the class of $\mathcal{O}_2$-free graphs is rather complex.
Observe indeed that complements of graphs without $K_{3,3}$ subgraph are $\mathcal{O}_2$-free. 
As~\smis remains NP-hard in graphs with girth at least 5 (hence without $K_{3,3}$ subgraph)~\cite{Alekseev82}, \textsc{Maximum Clique} is NP-hard in $\mathcal{O}_2$-free graphs. 
Nonetheless \smis could be tractable in $\mathcal{O}_k$-free graphs, as is the case in graphs of bounded $\ocp$:

\begin{conjecture}\label{conj:mis}
\mis is solvable in polynomial time in $\mathcal{O}_k$-free graphs.
\end{conjecture}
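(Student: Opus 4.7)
The plan for attacking \Cref{conj:mis} is to split on whether the $n$-vertex $\mathcal{O}_k$-free input graph $G$ contains $K_{t,t}$ as a subgraph, for some $t = t(k)$ to be chosen. The sparse case is essentially settled by our main theorem: if $G$ contains no $K_{t,t}$ subgraph, \Cref{thm:mainthm} (applied in a constructive form) produces a feedback vertex set $F$ of size $f = O_{t,k}(\log n)$ in polynomial time. Enumerating the $2^f = n^{O_{t,k}(1)}$ subsets $S \subseteq F$ that could lie in a maximum independent set, and solving the residual \smis on the forest $G - F - N(S)$ in linear time for each, yields an overall polynomial-time algorithm. So the conjecture reduces to handling graphs that contain arbitrarily large complete bipartite subgraphs.

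For the dense case, take a biclique $(A,B)$ with $|A|=|B|=t \gg k$. No independent set can contain vertices from both $A$ and $B$, and a Ramsey argument inside $A$ and inside $B$ further lets us assume each is either a clique or an independent set in $G$. A natural recursive attack is to branch on the polynomially many possible intersections of the optimum with $A \cup B$, delete the chosen vertices together with their neighborhoods, and recurse on the remaining graph (which is still $\mathcal{O}_k$-free). For the recursion to run in polynomial time, every branch must strictly reduce some well-chosen potential, for instance the induced cycle packing number $\icp(G)$, or $|V(G)|$ by a linear fraction; this in turn requires a global structural statement about how the biclique $(A,B)$ must interact with the rest of $G$ under $\mathcal{O}_k$-freeness.

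A cleaner and more uniform route I would also try is to show that $\mathcal{O}_k$-free graphs admit only polynomially many potential maximal cliques; by the Bouchitt\'e--Todinca framework this would immediately give a polynomial algorithm for \smis (and indeed for many problems expressible over tree decompositions). Our main theorem already yields such a polynomial bound in the sparse regime, so the remaining task is to control the potential maximal cliques that intersect large $K_{t,t}$ subgraphs. The main obstacle, in my view, is ruling out pathological dense $\mathcal{O}_k$-free constructions that stay globally complex while resisting every such structural reduction: even for $k=2$ this seems to require structural insights beyond \Cref{thm:mainthm}, and is where the bulk of the work is likely to sit.
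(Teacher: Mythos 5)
You are trying to prove something that the paper itself does not prove: \Cref{conj:mis} is stated as an open conjecture, and the authors explicitly say they ``come short of proving [it] in general.'' The paper's actual results are (i) the sparse case (\Cref{cor:gen-alg}: polynomial time when $K_{t,t}$ subgraphs are excluded, via \Cref{cor:main-treewidth} and dynamic programming over a logarithmic-width tree decomposition) and (ii) a \emph{quasi-polynomial} algorithm in general (\Cref{thm:quasip}). Your handling of the sparse case is correct and essentially equivalent to the paper's: a feedback vertex set of size $O_{t,k}(\log n)$ can be extracted constructively, and enumerating its $n^{O_{t,k}(1)}$ subsets and solving \smis on the remaining forest works; the paper instead runs treewidth DP, but both are routine once \Cref{thm:mainthm} is in hand.

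The dense case is where your proposal has a genuine gap, which you yourself acknowledge. Neither of your two routes is carried out: for the branching route you never exhibit a potential that provably decreases in every branch (note that $\icp(G)$ need not drop when you delete a closed neighborhood, and nothing forces a linear fraction of $V(G)$ to disappear), and for the potential-maximal-cliques route no polynomial bound on PMCs of $\mathcal{O}_k$-free graphs is known or proved here. For comparison, the paper's furthest advance on the dense case (\Cref{thm:quasip}) uses a different and more delicate potential: it fixes a maximum collection of $q<k$ independent $4$-cycles, observes that the closed neighborhood of any one such $4$-cycle must hit \emph{every} maximum collection, and branches on a vertex $v$ of that cycle whose closed neighborhood hits at least a quarter of the collections. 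The ``take $v$'' branch destroys a constant fraction of the collections, so it can occur only $O(q\log n)$ times per root-to-leaf path, giving $n^{O(k^2\log n)}$ leaves, each reducible to the $K_{2,2}$-free (hence sparse) base case. Even this only yields quasi-polynomial time, because the ``discard $v$'' branch makes essentially no progress; turning it into a polynomial algorithm is exactly the open problem. So your proposal should be read as a reasonable research plan whose sparse half is sound, but it does not constitute a proof of the statement, and the missing ingredient is precisely the structural control of dense $\mathcal{O}_k$-free graphs that the paper also lacks.
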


As far as we can tell, \smis could even be tractable in graphs with bounded $\iocp$.
This would be a surprising and formidable generalization of~\cref{conj:mis} and of the same result for bounded $\ocp$~\cite{Fiorini21}.

 
 We note that  \Cref{cor:main-treewidth} implies~\cref{conj:mis} in the sparse case. We come short of proving~\cref{conj:mis} in general, but not by much. 
 We obtain a quasi\-polynomial time algorithm for \smis in general $\mathcal{O}_k$-free graphs, excluding that this problem is NP-complete without any complexity-theoretic collapse (and making it quite likely that the conjecture indeed holds).
 
\begin{restatable}{theorem}{quasip}\label{thm:quasip}
  There exists a function $f$ such that for every positive integer $k$, \mis can be solved in quasi\-polynomial time $n^{O(k^2 \log n+f(k))}$ in $n$-vertex $\mathcal{O}_k$-free graphs.
 \end{restatable}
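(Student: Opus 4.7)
The plan is to combine \Cref{cor:main-treewidth} with a balanced-separator divide-and-conquer. It is classical that if every induced subgraph of a graph class admits a balanced vertex separator of size $s(n)$, then \smis is solvable in time $n^{O(1)} \cdot 2^{O(s(n) \log n)}$: enumerate the intersection $I \cap S$ of the optimum independent set $I$ with a balanced separator $S$ (at most $2^{|S|}$ options), and recurse independently on each of the at most $n/2$-sized components of $G \setminus S$. With $s(n) = O(k^2 \log n)$, unrolling the recurrence over depth $O(\log n)$ gives exactly the target $n^{O(k^2 \log n)}$.

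The task thus reduces to proving that every $n$-vertex $\mathcal{O}_k$-free graph admits a balanced vertex separator of size $O(k^2 \log n)$. The $K_{t,t}$-subgraph-free case (for $t = t(k)$ chosen large enough) is handled directly by \Cref{cor:main-treewidth}, which gives treewidth $O_{t,k}(\log n)$ and hence balanced separators of the same size. In fact, in this sparse regime one can solve \smis outright in polynomial time by running standard dynamic programming on the tree decomposition, since $2^{O(\log n)} \cdot n^{O(1)} = n^{O(1)}$; this polynomial leaf cost accounts for the $n^{f(k)}$ factor in the exponent, which absorbs the hidden $k$-dependence of the treewidth bound and of the tree-decomposition computation.

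To extend the separator bound to $\mathcal{O}_k$-free graphs that contain $K_{t,t}$-subgraphs, the natural plan is to greedily add a hitting set for all $K_{t,t}$-subgraphs into the separator, then invoke \Cref{cor:main-treewidth} on the $K_{t,t}$-free residual graph. The key structural observation is that whenever $A, B$ are the sides of a $K_{t,t}$-subgraph, any independent set $I$ must avoid either $A$ or $B$ entirely (otherwise a vertex of $A \cap I$ and one of $B \cap I$ would be joined by a biclique edge), so such subgraphs are ``fragile'' from the standpoint of \smis.

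The main obstacle is the quantitative bound: showing that $O(k^2 \log n)$ vertices suffice to hit all $K_{t,t}$-subgraphs while keeping the separator balanced. The natural route is an Erd\H{o}s--P\'osa-flavored dichotomy exploiting $\mathcal{O}_k$-freeness---either a small $K_{t,t}$-hitting set exists, or one can extract a large packing of pairwise vertex-disjoint $K_{t,t}$-subgraphs, within which a Ramsey-style selection would produce $k$ pairwise independent induced cycles, contradicting the hypothesis. Making this dichotomy tight enough to yield the $O(k^2 \log n)$ bound---rather than a weaker polynomial one---is the delicate step, and is where \Cref{cor:main-treewidth} must be leveraged not only on the final $K_{t,t}$-free residual graph but also to control intermediate stages of the peeling.
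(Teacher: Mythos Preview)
Your separator-based plan has a genuine gap that cannot be patched: the central claim---that every $n$-vertex $\mathcal{O}_k$-free graph admits a balanced vertex separator of size $O(k^2\log n)$---is false. The complete graph $K_n$ is $\mathcal{O}_2$-free (any two vertex-disjoint cycles are adjacent), yet any balanced separator of $K_n$ has size $\Omega(n)$, and any $K_{2,2}$-hitting set in $K_n$ has size at least $n-3$. The Erd\H{o}s--P\'osa-style dichotomy you sketch also breaks on this example: $K_n$ contains $\lfloor n/4\rfloor$ pairwise vertex-disjoint copies of $K_{2,2}$, but they are all mutually adjacent, so no Ramsey selection yields independent cycles. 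The ``fragility'' observation about independent sets avoiding one side of a biclique is correct, but it is a branching-type fact, not a separator-size fact, and you do not exploit it as such.

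The paper does \emph{not} bound separators in the dense regime. Instead it runs a branching algorithm whose potential is the number $s$ of $4q$-vertex subsets forming $q$ pairwise independent $4$-cycles, where $q<k$ is the maximum size of such a packing. Any $4$-cycle in a maximum packing has its closed neighborhood meeting every such configuration, so some vertex $v$ has $N[v]$ hitting at least $s/4$ of them; branching on ``$v\in I$'' (delete $N[v]$, drop $s$ by a factor $3/4$) versus ``$v\notin I$'' (delete $v$, drop $s$ by $1$) gives at most $n^{O(q\log n)}$ leaves, each with strictly smaller $q$. Iterating down to $q=0$ lands in the $K_{2,2}$-free case, where \cref{cor:main-treewidth} finishes in polynomial time. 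The induction over $q$ is what produces the $k^2$ in the exponent.
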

 
 This is in sharp contrast with what is deemed possible in general graphs.
 Indeed, any exact algorithm for \smis requires time $2^{\Omega(n)}$ unless the Exponential Time Hypothesis (asserting that solving $n$-variable \textsc{$3$-SAT} requires time $2^{\Omega(n)}$) fails~\cite{Impagliazzo01}.
 
 \medskip
 
 It should be noted that~\cref{conj:mis} is a special case of an intriguing and very general question by Dallard, Milani\v c, and \v Storgel~\cite{Dallard21} of whether there are planar graphs $H$ for which \smis is NP-complete on graphs excluding $H$ as an induced minor.
 In the same paper, the authors show that \smis is in fact polytime solvable when $H$ is $W_4$ (the 4-vertex cycle with a fifth universal vertex), or $K_5^-$ (the 5-vertex clique minus an edge), or $K_{2,t}$. 
Gartland et al.~\cite{Gartland21} (at least partially) answered that question when $H$ is a path, or even a cycle, by presenting in that case a quasi-polynomial algorithm for \smis.
 As we will mention again later, Korhonen~\cite{Korhonen22} showed that bounded-degree graphs excluding a~fixed planar graph~$H$ as an induced minor have bounded treewidth, thereby fully settling the question of Dallard et al. when the degree is bounded. 
 He also derived an algorithm running in time $2^{O(n/\log^{1/6}n)}=2^{o(n)}$, in the general (non bounded-degree) case.
 
 \Cref{thm:quasip} now adds a quasi-polynomial time algorithm when $H$ is the disjoint union of triangles.
 This is an orthogonal generalization of the trivial case when $H$ is a triangle (hence the graphs are forests) to that of Gartland et al.
 We increase the number of triangles, while the latter authors increase the length of the cycle.
 Our proofs are very different, yet they share a common feature, that of measuring the progress of the usual branching on a vertex by the remaining amount of relevant (semi-)induced subgraphs. 
 
 
 \medskip
 
A natural related problem is the complexity of deciding $\mathcal{O}_k$-freeness. 
%
%
A simple consequence of~\cref{cor:main-treewidth} is that one can test whether a graph without $K_{t,t}$ subgraph is $\mathcal{O}_k$-free in polynomial time. For $k=2$, when no complete bipartite is excluded as a subgraph, Le~\cite{Khang17} conjectured the following, which had been raised as an open question earlier by Raymond~\cite{Ray15}.

\begin{conjecture}[Le~\cite{Khang17}]\label{conj:khang}
There is a constant $c$ such that every $\mathcal{O}_2$-free $n$-vertex graph has at most $n^c$ distinct induced paths.
\end{conjecture}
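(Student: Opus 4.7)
The plan is a structural induction that exploits a direct consequence of $\mathcal{O}_2$-freeness: if $C$ is an induced cycle of $G$, then $F := G - N[C]$ contains no cycle at all, since any cycle in $F$ would contain an induced cycle vertex-disjoint from and non-adjacent to $C$, violating the hypothesis. Hence $F$ is a forest, and if $G$ itself is acyclic then its induced paths coincide with its paths and there are at most $\binom{n}{2} + n$ of them. Otherwise I fix an induced cycle $C$ and bound induced paths according to how they interact with the ``core'' $G[N[C]]$ and the forest~$F$.

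An induced path $P$ of $G$ decomposes uniquely as a concatenation $Q_1 R_1 Q_2 R_2 \ldots$ of maximal subpaths alternating between $N[C]$ and $F$, where each $Q_i$ is an induced path in $G[N[C]]$ and each $R_j$ is an induced path in $F$, hence an ordinary forest path specified by its two endpoints. Each $R_j$ therefore contributes at most $O(n^2)$ choices, and its endpoints constrain the adjacent $Q_i$'s: the last vertex of $Q_i$ must be a neighbor in $N[C]$ of the corresponding endpoint of $R_i$, and symmetrically for $Q_{i+1}$. What remains is to bound the number of interleaving patterns and the number of feasible sequences of $Q_i$'s; both reduce to counting induced paths inside $G[N[C]]$, a graph where every vertex lies on or is adjacent to a fixed induced cycle.

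The main obstacle is precisely this ``core'' subproblem: counting induced paths in an $\mathcal{O}_2$-free graph with a dominating induced cycle. My plan is a secondary induction in which each such induced path is encoded by its attachment pattern along $C$, leveraging $\mathcal{O}_2$-freeness to forbid many ``parallel'' non-crossing attachment strands, since two such strands together with the arcs of $C$ they subtend would produce a pair of independent induced cycles. Converting this ban on parallel strands into a clean polynomial bound is the genuinely delicate step, and is, I suspect, exactly the reason the conjecture remains open. As a concrete first milestone one can already derive, by combining \Cref{cor:main-treewidth} with a standard tree-decomposition dynamic program that tracks the intersection of an induced path with each bag, that every $K_{t,t}$-subgraph-free $\mathcal{O}_2$-free graph has at most $n^{O_t(\log n)}$ induced paths. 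The substantive content of \cref{conj:khang} is in pushing the exponent from $O(\log n)$ down to a constant and in handling the dense case without a $K_{t,t}$-freeness assumption; it is at that improvement, not at the decomposition above, that I expect the real difficulty to concentrate.
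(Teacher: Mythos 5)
First, a point of comparison: the statement you set out to prove is recorded in the paper as a \emph{conjecture} (due to Le), and the paper contains no proof of it. It only notes that the conjecture was subsequently resolved by Nguyen, Scott and Seymour (in the more general $\mathcal{O}_k$-free setting) \emph{using} \Cref{thm:mainthm} as an ingredient. So there is no in-paper argument to measure yours against; the only question is whether your proposal constitutes a proof, and it does not --- as you yourself concede.

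The opening reduction is sound: for an induced cycle $C$ of an $\mathcal{O}_2$-free graph $G$, any cycle in $G-N[C]$ would contain an induced cycle independent from $C$, so $G-N[C]$ is a forest, and each maximal segment of an induced path inside that forest is determined by its two endpoints. But two essential steps are missing. First, you never bound the number of alternations between $N[C]$ and the forest: an induced path could a priori switch sides $\Omega(n)$ times, and then the product of ``$O(n^2)$ choices per segment'' over all segments is $n^{\Theta(n)}$, not $n^{O(1)}$; you would need either a constant bound on the number of blocks or a single global polynomial encoding, and you supply neither. Second, the ``core subproblem'' of counting induced paths in $G[N[C]]$ is not smaller than the original problem in any measurable sense --- $N[C]$ may be all of $V(G)$ when $C$ is dominating --- so your structural induction has no decreasing parameter, and you explicitly admit that the step converting the ban on parallel attachment strands into a polynomial count is ``exactly the reason the conjecture remains open.'' A proof cannot defer its central lemma to a suspicion. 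The $n^{O_t(\log n)}$ bound for $K_{t,t}$-subgraph-free graphs via \Cref{cor:main-treewidth} is a reasonable observation, but closing the gap from quasi-polynomial to polynomial and handling the dense case are where the entire content of \cref{conj:khang} lies, and your proposal leaves both untouched.
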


\cref{conj:khang} was recently solved by Nguyen, Scott, and Seymour~\cite{NSS22} in the more general $\mathcal{O}_k$-free case using our \Cref{thm:mainthm}. This implies in particular that the number of induced cycles in $\mathcal{O}_k$-free graphs is polynomial (since this number cannot be more than $n$ times  the number of induced paths), and thus testing $\mathcal{O}_k$-freeness can be done in polynomial time by enumerating all induced cycles and testing, for every $k$ cycles in this collection, whether they are pairwise independent. 

%

\medskip

\paragraph{Organization of the paper.} In~\cref{sec:lb}, we prove that~\cref{thm:mainthm,cor:main-treewidth} are tight already for $k=2$ and $t=3$. 
\Cref{sec:algo} solves \smis in $\mathcal{O}_k$-free graphs in quasi-polynomial time, among other algorithmic applications of \cref{cor:main-treewidth}.

The proof of our main structural result, \cref{thm:mainthm}, spans from \cref{sec:prel} to~\cref{sec:main}. After some preliminary results (\cref{sec:prel}), we show in \Cref{sec:statement} that it suffices to prove \Cref{thm:mainthm} when the graph $G$ has a simple structure: a cycle $C$, its neighborhood $N$ (an independent set), and the remaining vertices $R$ (inducing a forest). Instead of directly exhibiting a logarithmic-size feedback vertex set, we rather prove that every such graph contains a vertex of degree linear in the so-called ``cycle rank'' (or first Betti number) of the graph. For sparse $\mathcal{O}_k$-free graphs, the cycle rank is at most linear in the number of vertices and decreases by a constant fraction when deleting a vertex of linear degree. We then derive the desired theorem by induction, using as a base case that if the cycle rank is small, we only need to remove a small number of vertices to obtain a tree. 
To obtain the existence of a linear-degree vertex in this simplified setting, we argue in~\Cref{sec:cutting} that we may focus on the case where the forest $G[R]$ contains only paths or only large ``well-behaving''
subdivided stars. In~\cref{sec:tsp_new}, we discuss how the $\mathcal{O}_k$-freeness restricts the adjacencies between these stars/paths and $N$. Finally, in~\cref{sec:main}, we argue that the restrictions yield a simple enough picture, and derive our main result.

\section{Sparse $\mathcal{O}_2$-free graphs with unbounded treewidth}\label{sec:lb}

In this section, we show the following.
\begin{theorem}\label{thm:construction}
For every natural $k$, there is an $\mathcal{O}_2$-free  graph with $2^k+k-1$ vertices, which does not contain $K_{3,3}$ as a subgraph and has treewidth  $k$. 
\end{theorem}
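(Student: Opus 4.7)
The plan is to exhibit the graph $G_k$ explicitly and verify its four required properties (vertex count $2^k+k-1$, $\mathcal{O}_2$-freeness, absence of $K_{3,3}$ as a subgraph, and treewidth exactly $k$). The decomposition $2^k+k-1 = (2^k+1) + (k-2)$ strongly suggests that $G_k$ consists of an ``outer cycle'' $C_k$ of length $2^k+1$ (the \emph{rim}) together with $k-2$ additional \emph{apex} vertices, each attached to a carefully chosen subset of the rim. This matches the small cases perfectly: $G_2$ is just $C_5$ (with no apex), $G_3$ is the wheel on a $9$-cycle (one apex, adjacent to all rim vertices), and $G_4$ is the ``double wheel'' on a $17$-cycle with both apexes adjacent to the whole rim, all of which can be checked directly to satisfy the required properties.

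For general $k$, I would try an inductive construction: obtain $G_k$ from $G_{k-1}$ by subdividing $2^{k-1}$ edges along the rim of $G_{k-1}$ (producing the new rim of length $2^k+1$) and adding one new apex adjacent to a precise subset of the new subdivision vertices. The upper bound $\tw(G_k) \le k$ would then follow by extending a tree decomposition of $G_{k-1}$ with the new apex added to every bag, while the lower bound $\tw(G_k) \ge k$ would be witnessed by a bramble of order $k+1$, built hierarchically using each apex together with an arc of the rim. To rule out a $K_{3,3}$ subgraph, I would use that each rim vertex has only two rim-neighbors, and argue by cases on how many vertices of a putative $K_{3,3}$ are apex vs.\ rim, with the hierarchical adjacency pattern ensuring that any three apexes share at most two common rim neighbors. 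For $\mathcal{O}_2$-freeness, every cycle is either the whole rim or an arc of the rim closed through one or several apexes, and one would verify by case analysis on these ``cycle types'' that any two of them share a vertex or are connected by an edge.

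The main obstacle is to choose the apex-to-rim adjacencies so that all three of $\mathcal{O}_2$-freeness, $K_{3,3}$-freeness, and treewidth exactly $k$ hold simultaneously. The naive choice where all apexes are adjacent to the whole rim gives the ``multi-wheel'' and works for $k\le 4$, but breaks $K_{3,3}$-freeness as soon as $k\ge 5$, because three apexes together with any three rim vertices then form a $K_{3,3}$. Conversely, apex adjacencies that are too sparse allow two cycles to live in distant arcs of the rim, vertex-disjoint and mutually non-adjacent, breaking $\mathcal{O}_2$-freeness. The technical heart of the proof is to design the (necessarily hierarchical) adjacency pattern that threads this needle, and to organize the inductive step so that the new apex simultaneously forces one extra unit of treewidth, preserves $\mathcal{O}_2$-freeness by restoring a common vertex or edge between pairs of cycles that the subdivision could have separated, and adds no new $K_{3,3}$ with respect to the previous apexes.
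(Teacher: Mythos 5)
Your proposal does not contain a proof: it is a plan whose decisive ingredient is missing. You correctly identify that everything hinges on ``the (necessarily hierarchical) adjacency pattern'' between the apex vertices and the rim, but you never specify that pattern, and you acknowledge as much. Without it, none of the four properties ($\mathcal{O}_2$-freeness, $K_{3,3}$-freeness, the treewidth lower bound, even the vertex count of your inductive step) can be checked; the small cases $k\le 4$ that you verify are exactly the cases where the naive ``all apexes see the whole rim'' choice happens to work, and you yourself observe that it fails for $k\ge 5$. The tension you describe --- adjacencies dense enough that any two remote cycles are joined by an edge, yet sparse enough to avoid $K_{3,3}$ and keep the treewidth from being forced by a trivial reason --- is precisely the content of the theorem, so leaving it unresolved is not a routine detail but the entire difficulty.

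For comparison, the paper resolves this with a concretely different skeleton: $G_k$ is a \emph{path} $u_1,\dots,u_{2^k-1}$ (not a cycle) together with an \emph{independent set} of $k$ vertices $v_0,\dots,v_{k-1}$ (not $k-2$ apexes), where $u_j$ is joined to $v_i$ exactly when $i$ is the $2$-order of $j$ (the ruler sequence). Each path vertex then has a \emph{unique} neighbor among the $v_i$, which kills $K_{2,3}$ (hence $K_{3,3}$) immediately; $\mathcal{O}_2$-freeness follows because any subpath of the path whose endpoints carry labels $<i$ must pass through a vertex labeled $i$, so a cycle through $v_i$ always sends an edge to any cycle avoiding $v_i,\dots,v_{k-1}$; and the lower bound $\tw(G_k)\ge k$ is obtained via an explicit $K_{k+1}$ minor (a bramble of order $k+1$, as you suggest, would be an acceptable substitute, but again only once the graph is fixed). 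If you want to salvage your rim-plus-apexes framework, you would need to exhibit an explicit hierarchical neighborhood for each apex and then actually carry out the three case analyses you sketch; as written, the proposal asserts that such a pattern exists rather than producing one.
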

In particular, for infinitely many values of $n$, there is an $\mathcal{O}_2$-free  $n$-vertex graph which does not contain $K_{3,3}$ as a subgraph and has treewidth at least $\log_2 n - 1$.

\paragraph*{Construction of $G_k$.}
To build $G_k$, we first define a word $w_k$ of length $2^k-1$ on the alphabet~$[k]$.
We set $w_1 = 1$, and for every integer $i > 1$, $w_i = i~w_{i-1}[1]~i~w_{i-1}[2]~i~\ldots~i~w_{i-1}[2^{i-1}-2]$ $~i~w_{i-1}[2^{i-1}-1]~i$.
It is worth noting that equivalently $w_i = \text{incr}(w_{i-1})~1~\text{incr}(w_{i-1})$, where $\text{incr}$ adds 1 to every letter of the word. 
Let $\Pi_k$ be the $(2^k-1)$-path where the $\ell$-th vertex of the path (say, from left to right) is denoted by $\Pi_k[\ell]$. 

The graph $G_k$ is obtained by adding to $\Pi_k$ an independent set of $k$ vertices $v_1, v_2, \ldots, v_k$, and linking by an edge every pair $v_i, \Pi_k[\ell]$ such that $i \in [k]$ and $w_k[\ell]=i$.

\smallskip

Observe that we can also define the graph $G_k$ directly, rather than iteratively: it is the union of a path $u_1,\ldots,u_{2^k-1}$  and an independent set $\{v_0,\ldots,v_{k-1}\}$, with an edge between $v_i$ and $u_j$ if and only if $i$ is the 2-order of $j$ (the maximum $k$ such that $2^k$ divides $j$).

See~\cref{fig:lowerbound} for an illustration.  

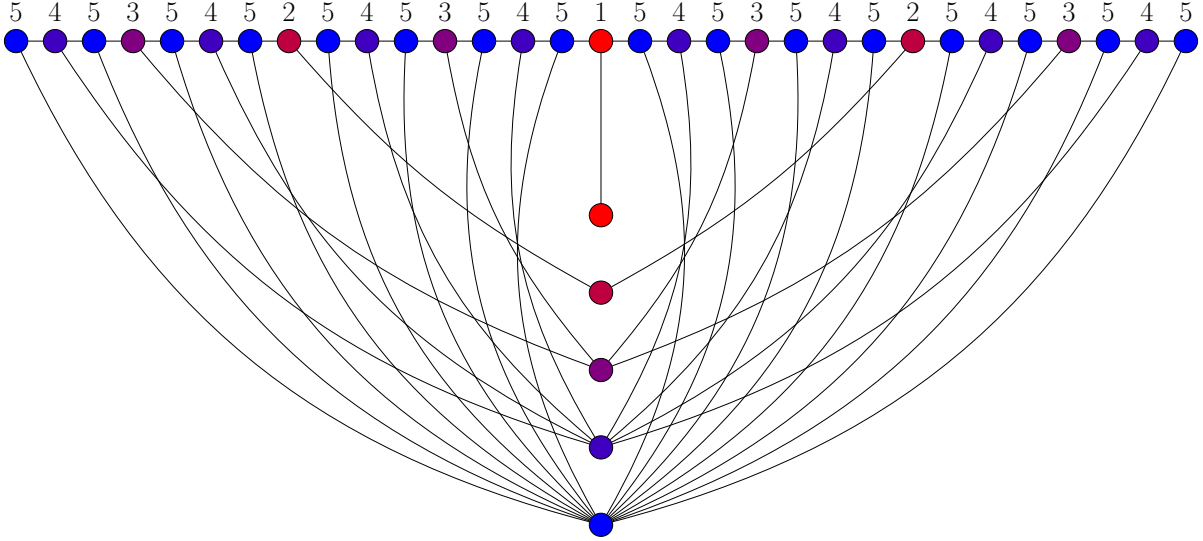
\begin{figure}[h!]
\centering
\resizebox{450pt}{!}{
\begin{tikzpicture}
\def\k{5}
\pgfmathtruncatemacro\km{\k-1}
\def\mw{0.6}
\node[draw,circle,fill=red,minimum width=\mw cm] (u1) at (0,0) {} ;
\node at (0,0.75) {\LARGE{$1$}} ;
\foreach \i in {2,...,\k}{
\pgfmathtruncatemacro\p{100 * \i / \km - 100 / \km}
\pgfmathtruncatemacro\q{2^\i / 2}
\pgfmathtruncatemacro\qm{\q / 2 - 1/2}
\pgfmathtruncatemacro\kk{\k - \i + 1}
\pgfmathtruncatemacro\s{2^\kk}
\pgfmathtruncatemacro\l{\k - \i}
\pgfmathtruncatemacro\ss{2^\l - 1}
\foreach \j in {0,...,\qm}{
\node[draw,circle,fill=blue!\p!red,minimum width=\mw cm] (u\i-\j) at (- \j * \s - \ss - 1,0) {} ;
\node at (- \j * \s - \ss - 1,0.75) {\LARGE{$\i$}} ;
\node[draw,circle,fill=blue!\p!red,minimum width=\mw cm] (v\i-\j) at (\j * \s + \ss + 1,0) {} ;
\node at (\j * \s + \ss + 1,0.75) {\LARGE{$\i$}} ;
}
}
\foreach \i in {1,...,\k}{
\pgfmathtruncatemacro\p{100 * \i / \km - 100 / \km}
\node[draw,circle,fill=blue!\p!red,minimum width=\mw cm] (w\i) at (0,-2.5 - 2 * \i) {} ;
}

\pgfmathtruncatemacro\q{2^\k / 2 - 1}
\foreach \j in {-\q,...,\q}{
  \node[circle,minimum width=\mw cm] (a\j) at (\j,0) {} ;
}

\pgfmathtruncatemacro\qp{-2^\k / 2 + 2}
\pgfmathtruncatemacro\qm{\q - 1}
\foreach \j [count = \jp from \qp] in {-\q,...,\qm}{
  \draw (a\j) -- (a\jp) ;
}

\draw (w1) -- (u1) ;
\foreach \i in {2,...,\k}{
\pgfmathtruncatemacro\q{2^\i / 2}
\pgfmathtruncatemacro\qm{\q / 2 - 1/2}
\foreach \j in {0,...,\qm}{
\draw (w\i) to [bend left = 5 * \i] (u\i-\j) ;
\draw (w\i) to [bend right = 5 * \i] (v\i-\j) ;
}
}
\end{tikzpicture}
}
\caption{The graph $G_k$ for $k=5$: an $\mathcal{O}_2$-free graph without $K_{3,3}$ subgraph, $k+2^k-1$ vertices, and treewidth~$k$.}
\label{fig:lowerbound}
\end{figure}

\paragraph*{$G_k$ is $\mathcal{O}_2$-free and has no $K_{3,3}$ subgraph.}
The absence of $K_{3,3}$ (even $K_{2,3}$) as a subgraph is easy to check.
At least one vertex of the $K_{3,3}$ has to be some $v_i$, for $i \in [k]$.
It forces that its three neighbors $x, y, z$ are in $\Pi_k$.
In turn, this implies that a common neighbor of $x, y, z$ (other than $v_i$) is some $v_{i'} \neq v_i$; a contradiction since distinct vertices of the independent set have disjoint neighborhoods.

We now show that $G_k$ is $\mathcal{O}_2$-free.
Assume towards a contradiction that $G_k[C_1 \cup C_2]$ is isomorphic to the disjoint union of two cycles $G_k[C_1]$ and $G_k[C_2]$.
As $C_1$ and $C_2$ each induce a cycle, they each have to intersect $\{v_1, \ldots, v_k\}$.
Assume without loss of generality that $C_1$ contains $v_i$, and $C_2$ is disjoint from $\{v_i, v_{i+1}, \ldots, v_k\}$.
Consider a subpath $S$ of $C_2$ with both endpoints in $\{v_1, \ldots, v_k\}$, thus in $\{v_1, \ldots, v_{i-1}\}$, and all the other vertices of $S$ form a set $S' \subseteq V(\Pi_k)$.
It can be that the endpoints are in fact the same vertex $v_{i'}$, and in that case $S$ is the entire $C_2$.

Let $v_{i'}, v_{i''}$ be the two (possibly equal) endpoints.
Observe that $S'$ is a subpath of $\Pi_k$ whose two endpoints have label $i', i'' < i$.
In particular there is a vertex labeled $i$ somewhere along~$S'$.
This makes an edge between $v_i \in C_1$ and $C_2$, which is a contradiction.

\paragraph*{$G_k$ has treewidth $k$.}
Since $\{v_2, \ldots v_k\}$ is a feedback vertex set,  the treewidth of $G_k$ is at most $k$, so it is enough to prove that $G_k$ has treewidth at least $k$. We do this by proving that $G_k$ contains the complete graph $K_{k+1}$ as a minor (we thank an anonymous reviewer for suggesting the argument below, our initial argument only gave a $K_{k}$-minor in $G_k$). The minor $K_{k+1}$ is constructed as follows: for each $i\in [k]$, we denote by $V_i$ the subpath of $\Pi_k$ whose right endpoint is the leftmost vertex of $\Pi_k$ labeled $i$, and which is maximal with the property that it does not contain any vertex labeled $i+1$. Note that each set $V_i$ contains vertices labeled $i, i+2, i+3, \ldots, k$, and is adjacent to a vertex labeled $i+1$. For each $i\in [k]$, we let $V_i'$ be the union of $V_i$ and the vertex $v_i$ (this set induces a connected subgraph of $G_k$), and we define $V_{k+1}'$ as the set of vertices of $\Pi_k$ lying to the right of the unique vertex of $\Pi_k$ labeled 1. Note that the sets $V_i'$, $i\in [k+1]$, form a partition of $V(k+1)$. By definition there is an edge between any two sets $V_i', V_j'$ in $G_k$ for $1\le i<j\le k+1$, and thus $G_k$ contains $K_{k+1}$ as a minor, as desired.



\medskip

The \emph{twin-width} of $G_k$, however, can be shown to be at most a constant independent of~$k$.

\section{Algorithmic applications}\label{sec:algo}
This section presents algorithms on $\mathcal{O}_k$-free graphs based on our main result, specifically using the treewidth bound.
\maintw*

Single-exponential parameterized $O(1)$-approximation algorithms exist for treewidth.
Already in 1995, Robertson and Seymour~\cite{Robertson95} present a~$2^{O(\tw)}n^2$-time algorithm yielding a~tree-decomposition of width $4(\tw+1)$ for any input $n$-vertex graph of treewidth $\tw$.
Run on $n$-vertex graphs of logarithmic treewidth, this algorithm outputs tree-decompositions of width $O(\log n)$ in polynomial time. 
We thus obtain the following.

\begin{corollary}\label{cor:gen-alg}
\textsc{Maximum Independent Set}, \textsc{Hamiltonian Cycle}, \textsc{Minimum Vertex Cover}, \textsc{Minimum Dominating Set}, \textsc{Minimum Feedback Vertex Set}, and \textsc{Minimum Coloring} can be solved in polynomial time $n^{g(t,k)}$  $\mathcal{O}_k$-free graphs with no $K_{t,t}$ subgraph, for some function $g$.
\end{corollary}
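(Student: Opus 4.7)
The plan is to combine the logarithmic treewidth bound of \Cref{cor:main-treewidth} with a polynomial-time constant-factor approximation of treewidth and the standard catalogue of tree-decomposition dynamic programs. Given an input $\mathcal{O}_k$-free $n$-vertex graph $G$ with no $K_{t,t}$ subgraph, \Cref{cor:main-treewidth} yields $\tw(G) \leq c\log n$ for some $c = c(t,k)$; feeding $G$ into the Robertson--Seymour $2^{O(\tw)} \cdot n^{O(1)}$-time constant-factor approximation (or any of its modern single-exponential variants) then produces, in time $2^{O(c \log n)} \cdot n^{O(1)} = n^{O_{t,k}(1)}$, a tree decomposition of $G$ of width $w = O_{t,k}(\log n)$.

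With this decomposition in hand, I would apply to each problem its standard single-exponential tree-decomposition DP: the textbook $2^{O(w)} \cdot n^{O(1)}$ algorithms for \mis, \textsc{Minimum Vertex Cover}, \textsc{Minimum Dominating Set}, and \textsc{Minimum Feedback Vertex Set}; the Cut \& Count algorithm of Cygan et al.\ for \textsc{Hamiltonian Cycle}, also in time $2^{O(w)} \cdot n^{O(1)}$; and for \textsc{Minimum Coloring}, the standard partition-based DP combined with the easy bound $\chi(G) \leq \tw(G)+1$. In every case, plugging in $w = O_{t,k}(\log n)$ collapses the bound to a running time of the form $n^{g(t,k)}$ for a suitable function $g$.

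No new structural insight is needed: \Cref{cor:main-treewidth} carries the whole argument and the rest is a matter of matching each problem in the list to a known tree-decomposition DP with suitable dependence on the width. The only mildly non-obvious point is for \textsc{Minimum Coloring}, where one must invoke the fact that the chromatic number is dominated by the treewidth in order to control the number of colors that need to be considered; aside from this, there is no real obstacle to overcome.
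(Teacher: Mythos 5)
Your overall scheme is exactly the paper's: apply \Cref{cor:main-treewidth} to get width $O_{t,k}(\log n)$, compute an $O(1)$-approximate tree decomposition with the Robertson--Seymour algorithm in polynomial time, and then run the standard $2^{O(w)}n^{O(1)}$ dynamic programs. For \textsc{Maximum Independent Set}, \textsc{Minimum Vertex Cover}, \textsc{Minimum Dominating Set}, \textsc{Hamiltonian Cycle}, and \textsc{Minimum Feedback Vertex Set} this is correct and matches the paper (the paper opts for the deterministic rank-based approach rather than Cut \& Count for the last two, but that is only a matter of derandomization).

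There is, however, a genuine gap in your treatment of \textsc{Minimum Coloring}. You propose to control the number of colors via $\chi(G) \le \tw(G)+1$, but here that only gives $\chi(G) = O_{t,k}(\log n)$, i.e., a number of colors growing with $n$. The partition-based DP for coloring with $q$ colors on a decomposition of width $w$ has $q^{w+1}$ states per bag, so with $q = \Theta(\log n)$ and $w = \Theta(\log n)$ you get running time $(\log n)^{O(\log n)} = n^{O(\log\log n)}$, which is quasi-polynomial and not of the claimed form $n^{g(t,k)}$. The paper avoids this by invoking \cref{cor:avg-degree}: $\mathcal{O}_k$-free graphs with no $K_{t,t}$ subgraph have bounded degeneracy and hence chromatic number at most a \emph{constant} $h(t,k)+1$ independent of $n$. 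This reduces \textsc{Minimum Coloring} to \textsc{$q$-Coloring} for constantly many fixed values of $q$, each solvable in $q^{O(w)}n^{O(1)} = n^{O_{t,k}(1)}$ time. You need this constant bound on the chromatic number (coming from sparsity, not from treewidth) to close the argument.
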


\begin{proof}
Let $h(t,k)$ be the implicit function in~\cref{cor:main-treewidth} such that every $\mathcal{O}_k$-free $n$-vertex graph with no $K_{t,t}$ subgraph has treewidth at most $h(t,k) \log n$.

Algorithms running in time $2^{O(\tw)}n^{O(1)}=2^{h(t,k) \log n}n^{O(1)}=n^{h(t,k)+O(1)}=n^{g(t,k)}$ exist for all these problems but for \textsc{Minimum Coloring}.
They are based on dynamic programming over a tree-decomposition, which by \cref{cor:main-treewidth} has logarithmic width and by~\cite{Robertson95} can be computed in polynomial time.
For \textsc{Maximum Independent Set}, \textsc{Minimum Vertex Cover}, \textsc{Minimum Dominating Set}, and \mbox{\textsc{$q$-Coloring}} (for a fixed integer $q$) see for instance the textbook~\cite[Chapter 7.3]{Cygan15}.
For \textsc{Hamiltonian Cycle} and \textsc{Minimum Feedback Vertex Set}, deterministic parameterized single-exponential algorithms require the so-called rank-based approach; see~\cite[Chapter 11.2]{Cygan15}. 

By~\cref{cor:avg-degree}, $\mathcal{O}_k$-free graphs with no $K_{t,t}$ subgraph have bounded chromatic number. 
Thus a~polynomial time algorithm for \textsc{Minimum Coloring} is implied by the one for \textsc{$q$-Coloring}.
\end{proof}


In a scaled-down refinement of Courcelle's theorem~\cite{Courcelle90}, Pilipczuk showed that any problem expressible in Existential Counting Modal Logic (ECML) admits a single-exponential fixed-parameter algorithm in treewidth~\cite{Pilipczuk11}.
In particular:
\begin{theorem}[\cite{Pilipczuk11}]\label{thm:ecml-logtw}
ECML model checking can be solved in polynomial time on any class with logarithmic treewidth. 
\end{theorem}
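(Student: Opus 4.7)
The plan is simply to combine two ingredients that have already appeared in the paper. First, invoke the main algorithmic theorem of \cite{Pilipczuk11}: given an $n$-vertex graph $G$ together with a tree decomposition of width $w$, the model checking problem for any fixed ECML formula can be solved in time $2^{O(w)} \cdot n^{O(1)}$, where the constants hidden in the $O(\cdot)$ depend only on the formula. Second, reuse the approximate-treewidth routine of Robertson and Seymour \cite{Robertson95} already applied in the proof of \cref{cor:gen-alg}: in time $2^{O(\tw)} n^2$, it outputs a tree decomposition of width at most $4(\tw+1)$.

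Now fix a graph class $\mathcal{C}$ such that every $n$-vertex $G \in \mathcal{C}$ satisfies $\tw(G) \le c \log n$ for some constant $c$ depending only on $\mathcal{C}$. Running the Robertson--Seymour routine on $G$ produces, in time $2^{O(\log n)} n^2 = n^{O(1)}$, a tree decomposition of width $w = O(\log n)$. Feeding this decomposition into the algorithm of \cite{Pilipczuk11} yields a total running time of
\[
  2^{O(w)} \cdot n^{O(1)} = 2^{O(\log n)} \cdot n^{O(1)} = n^{O(1)},
\]
which is exactly the desired bound.

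In this write-up the real work sits inside the cited single-exponential algorithm for ECML, and so the only possible obstacle is Pilipczuk's result itself: one has to design a dynamic programming scheme over the tree decomposition in which the state associated with a bag of size $w$ is storable in $2^{O(w)}$ space and updatable in $2^{O(w)} n^{O(1)}$ total time, despite ECML's ability to count modulo constants and to use modal/existential quantification. Since \cite{Pilipczuk11} already establishes this, for our purposes it is used entirely as a black box, and the statement follows immediately by substituting $w = O(\log n)$.
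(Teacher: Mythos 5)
Your proposal is correct and matches the paper's (implicit) derivation: the paper states this theorem as an immediate consequence of Pilipczuk's single-exponential fixed-parameter algorithm in treewidth, exactly as you do by substituting $w = O(\log n)$, with the tree decomposition supplied by the same Robertson--Seymour approximation routine the paper uses for \cref{cor:gen-alg}. Nothing further is needed.
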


In a nutshell, this logic allows existential quantifications over vertex and edge sets followed by a counting modal formula that should be satisfied from every vertex $v$.
Counting modal formulas enrich quantifier-free Boolean formulas with $\Diamond^S \varphi$, whose semantics is that the current vertex~$v$ has a number of neighbors satisfying $\varphi$ in the ultimately periodic set $S$ of non-negative integers. 
Another consequence of~\cref{cor:main-treewidth} (and \cref{thm:ecml-logtw}) is that testing if a graph is $\mathcal{O}_k$-free can be done in polynomial time among sparse graphs, further indicating that the general case could be tractable. cb

\begin{corollary}\label{cor:recognition}
For any fixed $k$ and $t$, deciding whether a  graph with no $K_{t,t}$ subgraph is $\mathcal{O}_k$-free can be done in polynomial time.
\end{corollary}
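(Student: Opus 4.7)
The plan is to combine \Cref{cor:main-treewidth} with \Cref{thm:ecml-logtw} in a~two-phase procedure: first, either compute a~logarithmic-width tree decomposition of~$G$ or certify that $G$ is not $\mathcal{O}_k$-free; second, decide $\mathcal{O}_k$-freeness by a~polynomial-time model-check on the resulting decomposition.

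Let $h = h(t,k)$ be the implicit constant of \Cref{cor:main-treewidth}: every $\mathcal{O}_k$-free $n$-vertex graph without $K_{t,t}$ subgraph has treewidth at most $h \log n$. Given an input $G$ without $K_{t,t}$ subgraph, the first step is to run a~constant-factor treewidth approximation (such as the Robertson--Seymour algorithm mentioned above, or a~more modern single-exponential variant) with target $w = h \log n$. In time $2^{O(w)} n^{O(1)} = n^{O(1)}$, this either certifies $\tw(G) > h \log n$---in which case the contrapositive of \Cref{cor:main-treewidth} yields the answer "not $\mathcal{O}_k$-free"---or returns a~tree decomposition of $G$ of width $O(\log n)$.

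In the latter case, the plan is to express "$G$ contains $k$ pairwise vertex-disjoint and non-adjacent induced cycles" as an ECML sentence $\varphi_k$, and apply \Cref{thm:ecml-logtw} to decide $G \models \varphi_k$ in polynomial time. The sentence $\varphi_k$ existentially guesses disjoint vertex subsets $V_1, \ldots, V_k$ (together with auxiliary markers witnessing cyclicity), and a~counting modal formula evaluated at every vertex $v$ verifies that any $v \in V_i$ has exactly two neighbors in $V_i$ and none in $V_j$ for $j \neq i$.

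The main technical obstacle is enforcing that each $G[V_i]$ be a~\emph{single} cycle rather than a~disjoint union of cycles: $2$-regularity of $G[V_i]$ alone is insufficient, and modal logic cannot straightforwardly express connectedness. One way around this is to additionally guess, for each $V_i$, a~root vertex together with an orientation of the cycle edges, so that every non-root vertex of $V_i$ has a~locally certifiable unique predecessor along the cycle---all constraints that remain local enough to fit within ECML. If weaving this into ECML turns out to be delicate, a~robust fallback is to bypass \Cref{thm:ecml-logtw} and run a~direct dynamic programming over the tree decomposition for "$k$ pairwise non-adjacent induced cycles" using the rank-based approach (analogous to the \textsc{Minimum Feedback Vertex Set} algorithm invoked in \Cref{cor:gen-alg}); this runs in $2^{O(w)} n^{O(1)}$ time and therefore in polynomial time on a~decomposition of width $w = O(\log n)$, concluding the argument.
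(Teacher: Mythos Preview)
Your two-phase plan matches the paper's: invoke \cref{cor:main-treewidth} via a treewidth approximation (falling through to ``not $\mathcal{O}_k$-free'' if the width exceeds $h\log n$), then model-check an ECML sentence using \cref{thm:ecml-logtw}.

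Where you diverge is in the ``main technical obstacle'' you identify, which is not actually an obstacle. The paper's ECML sentence simply asks for sets $X_1,\dots,X_k$ that are pairwise disjoint, pairwise non-adjacent, and such that every vertex of $X_i$ has exactly two neighbors in $X_i$. You object that $2$-regularity of $G[X_i]$ only forces it to be a disjoint union of cycles, not a single cycle. But this is harmless: if each $G[X_i]$ is a non-empty $2$-regular graph and the $X_i$ are pairwise independent, then selecting \emph{one} cycle from each $G[X_i]$ already yields $k$ pairwise independent cycles, so $G$ is not $\mathcal{O}_k$-free; the converse is immediate. Hence no connectedness certificate, orientation gadget, or rank-based DP is needed. (The one genuine subtlety---which you do not mention and which the paper's formula also glosses over---is that the $X_i$ must be non-empty; otherwise the sentence is trivially satisfiable. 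This is easy to repair in ECML, e.g.\ by an additional existential vertex marker per colour.)
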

\begin{proof}
One can observe that $\mathcal{O}_k$-freeness is definable in ECML.
Indeed, one can write $$\varphi = \exists X_1 \exists X_2 \ldots \exists X_k~\left(\bigwedge_{1 \leqslant i \leqslant k} X_i \rightarrow \Diamond^{\{2\}} X_i\right)~\land~\left(\bigwedge_{1 \leqslant i < j \leqslant k} \neg (X_i \land X_j) \land (X_i \rightarrow \Diamond^{\{0\}} X_j)\right).$$
Formula $\varphi$ asserts that there are $k$ sets of vertices $X_1, X_2, \ldots, X_k$ such that every vertex has exactly two neighbors in $X_i$ if it is itself in $X_i$, the sets are pairwise disjoint, and every vertex has no neighbor in $X_j$ if it is in some distinct $X_i$ (with $i < j$).
Thus $G$ is $\mathcal{O}_k$-free if and only if $\varphi$ does not hold in $G$.
\end{proof}

We now show the main algorithmic consequence of our structural result.
This holds for any (possibly dense) $\mathcal{O}_k$-free graph, and uses the sparse case (\cref{cor:gen-alg}) at the basis of an induction on the size of a largest collection of independent 4-vertex cycles. It should be noted that this result (as well as the previous result on \smis above) also works for the weighted version of the problem, with minor modifications. 

\quasip*
\begin{proof}
Let $G$ be our $n$-vertex $\mathcal{O}_k$-free input.
Let $q$ be the maximum integer such that $G$ admits $q$~independent 4-vertex cycles (the cycles themselves need not be induced).
Clearly $q < k$.
We show the theorem by induction on $q$, namely that \smis can be Turing-reduced in time $n^{c (q+1)^2 \log n}$ for some constant $c$ (specified later) to smaller instances with no $K_{2,2}$ subgraphs (hence such that $q=0$).
We first examine what happens with the latter instances.
Let $f(k)=h(2,k)$ with $h(t,k)$ the hidden dependence of~\cref{cor:main-treewidth}. 
If $q=0$, $G$ does not contains $K_{2,2}$ as a subgraph, so we can solve \smis in polynomial time $n^{f(k)+O(1)}$ by~\cref{cor:gen-alg}. 

We now assume that $q \geqslant 1$, $n \geqslant 4$, and that the case $q-1$ of the induction has been established (or $q-1=0$).
Let $C$ be a 4-vertex cycle part of a $4q$-vertex subset consisting of $q$~independent \mbox{4-vertex} cycles.
Let $\mathcal S$ be the set of all $4q$-vertex subsets consisting of $q$~independent 4-vertex cycles in the current graph (at this point, $G$), and $s = |\mathcal S|$.
Thus $1 \leqslant s \leqslant n^{4q}$.
By assumption, the closed neighborhood of $C$, $N[C]$, intersects every subset in $\mathcal S$.
In particular, there is one of the four vertices of $C$, say, $v$, such that $N[v]$ intersects at least $s/4$ subsets of $\mathcal S$. 

We branch on two options: either we put $v$ in (an initially empty set) $I$, and remove its closed neighborhood from $G$, or we remove $v$ from $G$ (without adding it to $I$).
With the former choice, the size of $\mathcal S$ drops by at least $s/4$, whereas with the latter, it drops by at least 1.

Even if fully expanded while $s >0$, this binary branching tree has at most 
$$\sum\limits_{0 \leqslant i \leqslant 4q \log_{4/3}n} {n \choose i} = n^{O(q \log n)}~\text{leaves,}$$
since including a vertex in $I$ can be done at most~$4q \log_{4/3}n$ times within the same branch; thus, leaves can be uniquely described as binary words of length~$n$ with at most~$4q \log_{4/3}n$ occurrences of, say, 1.

We retrospectively set $c \geqslant 1$ such that the number of leaves is at most $n^{c q \log n}$, running the algorithm thus far (when $q \geqslant 1$) takes at most time $n^{c+c q \log n}$.
At each leaf of the branching, $s = 0$ holds, which means that the current graph does not admit $q$~independent 4-vertex cycles.
By the induction hypothesis, we can Turing-reduce each such instance in time $n^{c q^2 \log n}$.
Thus the overall running time is $$n^{c+c q \log n} + n^{c q \log n} \cdot n^{c q^2 \log n} \leqslant n^{c+c q \log n} \cdot (n^{c q^2 \log n}+1) \leqslant n^{c(q+1)^2 \log n - cq \log n - c \log n + c + \frac{1}{\log n}}.$$
Note that $n^{cq^2 \log n} \geqslant 1$ thus we could upper-bound $n^{cq^2 \log n}+1$ by $2n^{cq^2 \log n}=n^{cq^2 \log n + \frac{1}{\log n}}$.
Since $c, q \geqslant 1$ and $\log n \geqslant 1$, it holds that $-cq \log n - c \log n + c + \frac{1}{\log n} \leqslant -2c+c+1 \leqslant 0$.
Hence we get the claimed running time of $n^{c(q+1)^2 \log n}$ for the reduction to $q=0$, and the overall running time of $n^{c(q+1)^2 \log n + f(k) + O(1)}=n^{O(k^2 \log n+f(k))}$.
\end{proof}

One may wonder if some other problems beside \smis become (much) easier on $\mathcal{O}_k$-free graphs than in general.
As $2K_2$-free graphs are $\mathcal{O}_2$-free, one cannot expect a quasi-polynomial time algorithm for \textsc{Minimum Dominating Set}~\cite{Bertossi84,Corneil84}, \textsc{Hamiltonian Cycle}~\cite{Golumbic04}, \textsc{Maximum Clique}~\cite{Poljak74}, and \textsc{Minimum Coloring}~\cite{Kral01} since these problems remain NP-complete on $2K_2$-free graphs.
Nevertheless we give a quasi-polynomial time algorithm for \textsc{3-Coloring}.

\begin{theorem}
  There exists a function $f$ such that for every positive integer $k$, \textsc{3-Coloring} can be solved in quasi-polynomial time $n^{O(k^2 \log n+f(k))}$ in $n$-vertex $\mathcal{O}_k$-free graphs.
 \end{theorem}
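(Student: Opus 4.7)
The approach is to mimic the proof of \cref{thm:quasip} on the more general \textsc{List 3-Coloring} problem, where each vertex $v$ carries a list $L(v) \subseteq \{1,2,3\}$ of admissible colors (plain \textsc{3-Coloring} corresponds to the case $L(v)=\{1,2,3\}$ for every $v$). We induct on the maximum size $q$ of a collection of independent 4-vertex cycles in the current graph, exactly as in \cref{thm:quasip}. In the base case $q=0$, the graph contains no $K_{2,2}$ subgraph, so \cref{cor:main-treewidth} yields a tree decomposition of width $O_k(\log n)$ in polynomial time via \cite{Robertson95}, and the standard dynamic program for \textsc{List 3-Coloring} on a tree decomposition solves the sub-instance in time $3^{O_k(\log n)} n^{O(1)} = n^{O_k(1)}$.

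\textbf{Key steps.} For the inductive step ($q \ge 1$) we reuse the vertex selection of \cref{thm:quasip}: let $\mathcal{S}$ be the family of all maximum packings of independent 4-cycles, $s = |\mathcal{S}|$; pick any 4-cycle $C$ in some element of $\mathcal{S}$ and observe that by maximality $N[V(C)]$ meets every packing in $\mathcal{S}$, so by pigeonhole some $v \in V(C)$ has $N[v]$ hitting at least $s/4$ packings. We branch on the coloring of the whole cycle $V(C)$ (rather than just of $v$): there are at most $18$ sub-problems, one per proper 3-coloring of $C_4$, and in each of them the four vertices of $V(C)$ are physically deleted from $G$ while the lists of $N(V(C))$ are updated accordingly. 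This feeds into the same potential-based counting as \cref{thm:quasip}: ``fast'' branches shrink $|\mathcal{S}|$ by a constant fraction and ``slow'' ones by at least one unit, so the recursion tree has size at most $n^{O(k^2 \log n)}$. Combined with the base case, this yields the announced running time of $n^{O(k^2 \log n + f(k))}$.

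\textbf{Main obstacle.} The hardest point is recovering the geometric shrinkage of $|\mathcal{S}|$ in the analog of the ``$v \in I$'' branch of \cref{thm:quasip}. There, deleting $N[v]$ kills every packing that the closed neighborhood meets, whereas merely assigning a color to $v$ does not delete any vertex of $N(v)$. Branching on the entire coloring of $V(C)$ partially fixes this because it physically deletes the four cycle vertices; but a case distinction is still needed, depending on whether $V(C)$ itself accounts for a constant fraction of the $s/4$ packings met by $N[V(C)]$, or whether they are mostly met via $N(V(C)) \setminus V(C)$ (in which case we instead recurse on a vertex of $N(V(C)) \setminus V(C)$ that lies in many packings, branching on its color). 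Setting this case distinction up so that the bookkeeping matches the MIS counting is the main technical step; once it is in place, the remainder of the analysis is essentially \cref{thm:quasip} verbatim.
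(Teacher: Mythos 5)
Your overall scaffolding (induction on $q$, the base case via \cref{cor:main-treewidth}, and the pigeonhole choice of a vertex $v$ of the 4-cycle $C$ whose closed neighborhood meets at least $s/4$ packings) matches the paper. But the step you yourself flag as the ``main obstacle'' is a genuine gap, and your proposed fix does not close it. In the case where most of the $s/4$ packings met by $N[V(C)]$ contain a neighbor of $C$ rather than a vertex of $V(C)$, you propose to branch on the color of ``a vertex of $N(V(C))\setminus V(C)$ that lies in many packings''. No such vertex need exist: $N(V(C))\setminus V(C)$ can have $\Theta(n)$ vertices, so pigeonhole only guarantees a vertex lying in $s/\Theta(n)$ packings. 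Branching on its color then shrinks $|\mathcal{S}|$ by a factor $(1-1/\Theta(n))$ per step, so $\Theta(nq\log n)$ ``good'' branchings may be needed along a single root-to-leaf path, and the recursion tree becomes exponential rather than quasi-polynomial. The root cause is that assigning a color to a vertex never deletes its neighbors, so the unweighted potential $|\mathcal{S}|$ cannot be forced to drop geometrically.

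The paper repairs exactly this point by changing the potential: $s$ is redefined as the sum, over all vertex sets $X$ consisting of $q$ independent 4-vertex cycles, of the total list size of the vertices of $X$ (so $8 \le s \le 12n^{4q}$). After the trivial propagation rules every list has size 2 or 3, and any two such subsets of $\{1,2,3\}$ intersect; hence some vertex $v\in C$ and some color $c\in L(v)$ are such that $c$ occurs in at least an $s/96$ share of the list mass carried by the neighbors of $v$. The binary branch ``color $v$ with $c$'' versus ``delete $c$ from $L(v)$'' then decreases this weighted potential by at least $s/96$ or by at least $1$, respectively, and the counting of \cref{thm:quasip} goes through verbatim. (The paper remarks that this pairwise-intersection trick is precisely what breaks for \textsc{List 4-Coloring}.) To make your argument correct you would need to adopt this list-weighted potential or an equivalent device; your 18-way branching on the coloring of all of $V(C)$ is then unnecessary, a single vertex and a single color suffice.
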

 \begin{proof}
 We solve the more general \textsc{List 3-Coloring} problem, where, in addition, every vertex~$v$ is given a \emph{list} $L(v) \subseteq \{1,2,3\}$ from which one has to choose its color.
 Note that when $L(v)=\emptyset$ for some vertex $v$, one can report that the instance is negative, and when $|L(v)|=1$, $v$ has to be colored with the unique color in its list, and this color has to be deleted from the lists of its neighbors (once this is done, $v$ might as well be removed from the graph).
These reduction rules are performed as long as they apply, so we always assume that the current instance has only lists of size 2 and 3.

We follow the previous proof, and simply adapt the branching rule, and the value of $s$.
Now $s$ is defined as the sum taken over all vertex sets $X$ consisting of $q$ independent 4-vertex cycles (the cycles themselves need not be induced), of the sum of the list sizes of the vertices of $X$.
Hence $8 \leqslant s \leqslant 12 \cdot n^{4q}$.
There is a vertex $v \in C$ and a~color $c \in L(v)$ such that $c$ appears in at least $\frac{1}{2} \cdot \frac{1}{12} \cdot \frac{s}{4}=\frac{s}{96}$ of the lists of its neighbors.
This is because all the lists have size at least 2, and are subsets of $\{1,2,3\}$, thus pairwise intersect.
(Note that this simple yet crucial fact already breaks down for \textsc{List 4-Coloring}.)

We branch on two options: either we color $v$ with $c$, hence we remove color $c$ from the lists of its neighbors or we commit to not color $v$ by $c$, and simply remove $c$ from the list of $v$.
With the former choice, the size of $\mathcal S$ drops by at least $s/96$, whereas with the latter, it drops by at least 1.
The rest of the proof is similar with a possibly larger constant $c$.
 \end{proof}

\section{Preliminary results}\label{sec:prel}

An important property of graphs which do not contain the complete bipartite graph $K_{t,t}$ as a subgraph is that they are not dense (in the sense that they have a subquadratic number of edges). 

\begin{theorem}[K\H{o}v{\'a}ri, S{\'o}s, and Tur{\'a}n~\cite{KST}]\label{thm:kst}
For every integer $t\ge 2$ there is a constant $c_t$ such that any $n$-vertex graph with no $K_{t,t}$ subgraph has at most $c_t \,n^{2-1/t}$ edges.
\end{theorem}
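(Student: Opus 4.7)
The plan is to prove the Kővári–Sós–Turán theorem by a double-counting argument applied to the number of stars $K_{1,t}$ (one center and $t$ leaves) in $G$. Fix a graph $G$ on $n$ vertices with $m$ edges and degree sequence $d_1,\dots,d_n$. Let $N$ denote the number of $K_{1,t}$-subgraphs of $G$. Counting by the center of the star gives
\[
N \;=\; \sum_{v\in V(G)} \binom{d_v}{t}.
\]

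For the upper bound on $N$, I would count instead by the set of leaves. For any set $S$ of $t$ vertices, the number of common neighbors of $S$ in $G$ is at most $t-1$: otherwise $S$ together with $t$ common neighbors would form a $K_{t,t}$-subgraph, contradicting the hypothesis. Hence
\[
N \;\le\; (t-1)\binom{n}{t}.
\]

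The main technical step is to turn the resulting inequality $\sum_v \binom{d_v}{t}\le (t-1)\binom{n}{t}$ into a bound on $m=\frac12\sum_v d_v$. I would use convexity of $x\mapsto \binom{x}{t}$ (extended to reals by the polynomial $x(x-1)\cdots(x-t+1)/t!$, which is convex on $[t-1,\infty)$) together with Jensen's inequality:
\[
\sum_{v}\binom{d_v}{t} \;\ge\; n\binom{2m/n}{t}.
\]
Restricting to the case $2m/n\ge t-1$ (the other case is trivial as it yields $m=O_t(n)$, absorbed into $c_t$), the lower bound $\binom{2m/n}{t}\ge (2m/n-t+1)^t/t!$ combined with $\binom{n}{t}\le n^t/t!$ produces
\[
(2m/n - t + 1)^t \;\le\; (t-1)\, n^{t-1}.
\]
Taking $t$-th roots and rearranging gives $2m \le (t-1)^{1/t} n^{2-1/t} + (t-1)n$, and choosing $c_t$ large enough to absorb the linear term yields $m\le c_t\, n^{2-1/t}$, as desired.

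The only delicate point is handling the convexity bound cleanly for small average degree; this is resolved by the case split above, since if $2m/n < t-1$ then $m < (t-1)n/2$ is already $O_t(n) \le c_t n^{2-1/t}$ for any $n\ge 1$ once $c_t$ is sufficiently large. No other obstacles arise: the argument is entirely elementary once the double count is set up.
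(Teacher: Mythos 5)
Your proof is correct. Note that the paper does not prove this statement at all — it is quoted as a classical black box from the cited reference — so there is no internal proof to compare against; your double count of the stars $K_{1,t}$ by centers versus by leaf-sets, with the observation that a $t$-set with $t$ common neighbors would yield a $K_{t,t}$, is exactly the standard (indeed the original) Kővári–Sós–Turán argument. One small point of hygiene: Jensen cannot be applied to the raw polynomial $x(x-1)\cdots(x-t+1)/t!$ over the full range of the degrees, since it is not convex below $t-1$; the standard fix is to replace it by the function equal to this polynomial on $[t-1,\infty)$ and to $0$ below, which is convex on all of $\mathbb{R}$ and agrees with $\binom{d_v}{t}$ at every non-negative integer, after which your case split on whether $2m/n\ge t-1$ finishes the argument exactly as you wrote it.
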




The following lemma shows that for $\mathcal{O}_k$-free graphs, excluding $K_{t,t}$ as a subgraph is equivalent to a much stronger `large girth' condition, up to the removal of a bounded number of vertices.

\begin{lemma}
  \label{lem:sparsification}
  There is a function $f$ such that for any integer $\ell$ and any $\mathcal{O}_k$-free graph $G$ with no $K_{t,t}$ subgraph, the maximum number of vertex-disjoint cycles of length at most $\ell$ in $G$ is at most $f(\ell,t,k)$.
\end{lemma}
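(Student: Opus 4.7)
}

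The plan is to reduce the problem to a two-step pigeonhole: first a Ramsey step using $\mathcal{O}_k$-freeness, then a density (Kővári--Sós--Turán) step using the $K_{t,t}$-freeness. Suppose $G$ contains $N$ pairwise vertex-disjoint cycles $C_1,\dots,C_N$, each of length at most $\ell$. My first step is to replace each $C_i$ by a shortest cycle $C_i'$ in the induced subgraph $G[V(C_i)]$; this cycle still lies in $V(C_i)$, has length between $3$ and $\ell$, and, being shortest in its vertex set, is induced in $G$. The $C_i'$ are then $N$ vertex-disjoint induced cycles of length at most $\ell$.

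Next, form the auxiliary graph $H$ on vertex set $\{1,\dots,N\}$ with $ij\in E(H)$ iff there is an edge of $G$ between $V(C_i')$ and $V(C_j')$. If $\alpha(H)\geq k$, then the corresponding $k$ induced cycles are pairwise non-adjacent and vertex-disjoint, giving an induced copy of a graph in $\mathcal{O}_k$, contradicting $\mathcal{O}_k$-freeness. Hence $\alpha(H)\leq k-1$. By the classical Ramsey theorem $R(k,r)\leq \binom{k+r-2}{k-1}$, provided $N\geq R(k,r)$, $H$ must contain a clique of size $r$, i.e.\ there exist $r$ pairwise-adjacent induced cycles among the $C_i'$.

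For the second step, fix such a clique $C_{i_1}',\dots,C_{i_r}'$. Let $H'$ be the subgraph of $G$ on $V(C_{i_1}')\cup\dots\cup V(C_{i_r}')$ consisting only of the edges between distinct $V(C_{i_s}')$. Then $|V(H')|\leq r\ell$ and $|E(H')|\geq \binom{r}{2}$ (at least one edge per pair). By \cref{thm:kst}, if $\binom{r}{2}>c_t(r\ell)^{2-1/t}$ then $H'$ (and thus $G$) contains $K_{t,t}$ as a subgraph, a contradiction. A quick calculation shows this holds as soon as $r>(2c_t)^t\,\ell^{2t-1}$. Setting $r=r(\ell,t):=\lceil(2c_t)^t\,\ell^{2t-1}\rceil+1$, we conclude $N<R(k,r)$, and we may take $f(\ell,t,k):=R(k,r(\ell,t))$.

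Essentially nothing in this scheme should be hard: the main thing to verify carefully is that passing from $C_i$ to $C_i'$ preserves the properties we need (induced, vertex-disjoint, length $\leq\ell$) and that the Kővári--Sós--Turán application produces a $K_{t,t}$ subgraph of $G$ rather than merely of some auxiliary construction. Since $H'$ is by definition a subgraph of $G$, that last point is immediate. The mild obstacle is really only bookkeeping to extract an explicit Ramsey-times-polynomial bound on $f(\ell,t,k)$, but the lemma asks only for existence.
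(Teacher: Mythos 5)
Your proof is correct and follows essentially the same strategy as the paper: both build the auxiliary graph on the cycles, use $\mathcal{O}_k$-freeness to bound its independence number by $k-1$, and derive a contradiction with the K\H{o}v\'ari--S\'os--Tur\'an bound (your preliminary pass to induced cycles $C_i'$ is harmless but unnecessary, since $\mathcal{O}_k$-freeness already forbids $k$ independent, not necessarily induced, cycles). The one substantive difference is that the paper applies Tur\'an's theorem to get $\Omega(N^2/k)$ edges among all $N$ cycles and feeds the entire union into K\H{o}v\'ari--S\'os--Tur\'an, whereas you apply Ramsey to extract $r$ pairwise-adjacent cycles and work locally; this yields a Ramsey-type bound $f=R(k,r(\ell,t))$, worse in its dependence on $k$ than the paper's polynomial $(2c_tk\ell^2)^t$, but the lemma only asks for existence, so both are fine.
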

\begin{proof}
If $\ell\le 2$, we define $f(\ell,t,k)=0$ for any integers $t$ and $k$, and we observe that since $G$ does not contain any cycle of length at most $\ell$, the statement of the lemma holds trivially. 

Assume now that $\ell\ge 3$, and define $f(\ell,t,k):=(2c_t k\ell^2)^t$, where $c_t$ is the constant of Theorem \ref{thm:kst}.

Assume for the sake of contradiction that $G$ contains $N:=f(\ell,t,k)$ vertex-disjoint cycles of length at most $\ell$, which we denote by $C_1,\ldots,C_N$. Let $H$ be the graph with vertex set $v_1,\ldots,v_N$, with an edge between $v_i$ and $v_j$ in $H$ if and only if there is an edge between $C_i$ and $C_j$ in $G$. Since $G$ is $\mathcal{O}_k$-free, $H$ has no independent set of size $k$. By Tur\'an's theorem~\cite{Tur41}, $H$ contains at least $\tfrac{N^2}{2k-2}-\tfrac{N}2\ge \tfrac{N^2}{2k}-\tfrac{N}2$ edges.

Consider the subgraph $G'$ of $G$ induced by the vertex set  $\bigcup_{i=1}^N C_i$. The graph $G'$ has $n\le \ell N$ vertices, and $m\ge 3N +\tfrac{N^2}{2k}-\tfrac{N}2> \tfrac{N^2}{2k}$ edges. Note that by the definition of $N$, we have 
\[
m>\tfrac{N^2}{2k}=\tfrac1{2k}\cdot N^{2-1/t}\cdot N^{1/t}\ge
\tfrac1{2k\ell^{2-1/t}}\cdot n^{2-1/t}\cdot 2c_t k\ell^2\ge c_t \,n^{2-1/t},
\]
which contradicts Theorem~\ref{thm:kst}, since $G'$ (as an induced subgraph of $G$) does not contain $K_{t,t}$ as a subgraph.
\end{proof}

The \emph{girth}  of a graph $G$ is the minimum length of a cycle in $G$ (if $G$ is acyclic, its girth is set to be infinite). We obtain the following immediate corollary of Lemma~\ref{lem:sparsification}. 

\begin{corollary} \label{cor:sparsification}
There is a function $g$ such that for any integer $\ell\ge 3$, any $\mathcal{O}_k$-free graph $G$ with no $K_{t,t}$ subgraph contains a set $X$ of at most $g(\ell,t,k)$ vertices such that $G-X$ has girth at least~$\ell$.
\end{corollary}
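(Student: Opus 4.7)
The plan is to derive the corollary directly from \Cref{lem:sparsification} by an extremal (maximum packing) argument: destroy all short cycles by removing the vertices of a largest vertex-disjoint collection of them.

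Concretely, given $\ell \ge 3$, I would apply \Cref{lem:sparsification} with parameter $\ell-1$, which yields the bound $f(\ell-1,t,k)$ on the maximum number of vertex-disjoint cycles of length at most $\ell-1$ in any $\mathcal{O}_k$-free graph without $K_{t,t}$ subgraph. Then I would take a \emph{maximum} such collection $C_1, \dots, C_m$ in $G$, so that $m \le f(\ell-1,t,k)$, and set
\[
X \;=\; \bigcup_{i=1}^{m} V(C_i), \qquad g(\ell,t,k) \;:=\; (\ell-1)\cdot f(\ell-1,t,k).
\]
Since each $C_i$ has at most $\ell-1$ vertices, $|X| \le g(\ell,t,k)$ as required.

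It remains to check that $G - X$ has girth at least $\ell$. Suppose for contradiction that $G - X$ contains a cycle $C$ of length at most $\ell - 1$. By construction $V(C)$ is disjoint from $V(C_1) \cup \dots \cup V(C_m) = X$, so $C_1, \dots, C_m, C$ would form a larger vertex-disjoint collection of cycles of length at most $\ell-1$ in $G$, contradicting the maximality of $m$. Hence every cycle in $G - X$ has length at least $\ell$, which is the definition of girth at least $\ell$ (acyclicity is also covered by convention).

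The argument is entirely routine once \Cref{lem:sparsification} is in hand; there is no real obstacle, and the only minor point to keep in mind is the off-by-one shift between "girth at least $\ell$" and "no cycle of length at most $\ell-1$", which is handled by invoking \Cref{lem:sparsification} with parameter $\ell - 1$.
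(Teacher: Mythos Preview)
Your proof is correct and essentially identical to the paper's own argument: take a maximum vertex-disjoint collection of cycles of length at most $\ell-1$, let $X$ be the union of their vertex sets, bound $|X|$ by $(\ell-1)f(\ell-1,t,k)$, and use maximality to conclude $G-X$ has no short cycle. The paper's proof is exactly this, stated slightly more tersely.
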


\begin{proof}
Let $f$ be the function of Lemma~\ref{lem:sparsification}, and let $g(\ell,t,k):= (\ell-1)\cdot f(\ell-1,t,k)$. Consider a maximum collection of disjoint cycles of length at most $\ell-1$ in $G$. Let $X$ be the union of the vertex sets of all these cycles. By Lemma~\ref{lem:sparsification}, $|X|\le (\ell-1) f(\ell-1,t,k)=g(\ell,t,k)$, and by definition of $X$, the graph $G-X$ does not contain any cycle of length at most $\ell-1$, as desired.
\end{proof}

We now state a simple consequence of Corollary \ref{cor:sparsification}, which will be particularly useful at the end of the proof of our main result. A \emph{banana} in a graph $G$ is a pair of vertices joined by at least 2 disjoint paths whose internal vertices all have degree 2 in $G$.

\begin{corollary}\label{corollary:banana}
There is a function $f'$ such that  any $\mathcal{O}_k$-free graph $G$ with no $K_{t,t}$ subgraph contains a set $X$ of at most $f'(t,k)=O_t(k^t)$ vertices such that all bananas of $G$ intersect $X$.
\end{corollary}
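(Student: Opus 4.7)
The plan is to bound the maximum matching of the ``banana graph'' $\mathcal{B}(G)$ whose vertices are those of $G$ and whose edges are the banana-pairs of $G$. In any simple graph, the endpoints of a maximum matching form a vertex cover (if some edge $\{a,b\}$ had $a,b\notin V(M)$, we could extend $M$), so if the maximum matching of $\mathcal{B}(G)$ has size $N$, then $X:=V(M)$ is a $2N$-vertex set meeting every banana of $G$. It thus suffices to prove $N = O_t(k^t)$.

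To bound $N$, fix $N$ pairwise vertex-disjoint bananas $\{u_i,v_i\}_{i=1}^N$ of $G$. For each, select two internally disjoint paths $P_i^1, P_i^2$ in $G$, with only degree-$2$ internal vertices, joining $u_i$ to $v_i$, and form the cycle $C_i := P_i^1 \cup P_i^2$. The key structural claim is that the $N$ cycles $C_1,\ldots,C_N$ are themselves pairwise vertex-disjoint. Indeed, every internal vertex $w$ of $C_i$ has degree $2$ in $G$ with both $G$-neighbors on $C_i$; if $w$ also lay on $C_j$, those same two $G$-neighbors would have to lie on $C_j$, so the maximal degree-$2$ chain of $G$ containing $w$ would coincide along $C_i$ and along $C_j$ and, terminating at the first vertex of degree $\ne 2$, would force an endpoint of $\{u_i,v_i\}$ to coincide with an endpoint of $\{u_j,v_j\}$---a contradiction. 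An analogous tracing argument forbids an endpoint $u_i$ from lying on $C_j$.

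With the cycles $C_1,\ldots,C_N$ pairwise vertex-disjoint, I would then apply the very double counting used in the proof of \cref{lem:sparsification}. Build the auxiliary graph $H$ on $[N]$ with $ij \in E(H)$ iff some edge of $G$ joins $V(C_i)$ to $V(C_j)$. Since $G$ is $\mathcal{O}_k$-free, $\alpha(H) \le k-1$ (else $G$ contains $k$ independent cycles), so Turán's theorem applied to $\overline{H}$ gives $|E(H)| \ge \tfrac{N^2}{2(k-1)} - \tfrac{N}{2}$. Conversely, because each internal vertex of $C_i$ has both of its $G$-neighbors on $C_i$, every edge counted by $E(H)$ lies among the $2N$ endpoints $\bigcup_i\{u_i,v_i\}$; since $G$ has no $K_{t,t}$ subgraph, \cref{thm:kst} bounds that number by $c_t(2N)^{2-1/t}$. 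Comparing the two estimates yields $N^{1/t} = O_t(k)$, hence $N = O_t(k^t)$, which gives the desired $|X|=2N = O_t(k^t) = f'(t,k)$.

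The main subtlety lies in the structural claim: the tracing argument implicitly assumes that the banana endpoints have degree $\ge 3$ in $G$, for otherwise the degree-$2$ chain could continue through an endpoint and identify the two extracted cycles (as happens for pairs drawn from a cycle-component of $G$). Such degenerate bananas should be dispatched beforehand: $G$ has at most $k-1$ cycle components by $\mathcal{O}_k$-freeness, and the ``loop vertices'' of $G$---the vertices of degree $\ge 3$ lying on a cycle whose other vertices all have degree $2$---are in number at most $O_t(k^t)$ by the very same Turán--Kővári--Sós--Turán reasoning (the loop cycles at distinct loop vertices being pairwise vertex-disjoint). Absorbing these vertices into $X$ before running the matching argument deals with every banana having a degree-$2$ endpoint.
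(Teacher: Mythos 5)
Your proof is correct, and it reaches the same quantitative bound as the paper, but by a genuinely more hands-on route. The paper's proof is a two-line reduction: suppress all maximal degree-2 chains so that every banana of $G$ becomes a copy of $K_{2,s}$ (hence contains a $4$-cycle) in the suppressed graph $G'$, then invoke \cref{cor:sparsification} with $\ell=5$ to hit all short cycles of $G'$ and lift the hitting set back to $G$. You instead bound the maximum number $N$ of pairwise vertex-disjoint (non-degenerate) bananas directly and convert it into a cover via the standard matching-to-vertex-cover step. The combinatorial engine is the same in both cases --- Tur\'an on the auxiliary adjacency graph of a disjoint cycle packing plus K\H{o}v\'ari--S\'os--Tur\'an, exactly as in the proof of \cref{lem:sparsification} --- but because your extracted cycles $C_i$ may be long, you cannot quote \cref{lem:sparsification} (whose bound degrades with the cycle length) and must replace the estimate $n\le \ell N$ by your observation that every $G$-edge between $C_i$ and $C_j$ joins banana endpoints, so that the KST bound applies to the graph induced on the $2N$ endpoints only. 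This is the right fix and it works. The price of avoiding suppression is the degenerate case of bananas with a degree-2 endpoint, which you correctly identify and dispatch (cycle components and ``loop vertices''); the paper's suppression trick makes these cases invisible. Your tracing argument for the pairwise disjointness of the $C_i$ is sound once the endpoints are guaranteed to have degree at least $3$. In short: same counting core, different packaging; the paper's version is shorter because it reuses \cref{cor:sparsification}, while yours is self-contained and makes explicit the structural facts (uniqueness of maximal degree-2 chains, edges between bananas landing on endpoints) that the suppression operation encodes implicitly.
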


\begin{proof}
Let $G'$ be the graph obtained from $G$ by replacing each maximal path whose internal vertices have degree 2 in $G$ by a path on two edges (with a single internal vertex, of degree 2). Note that each banana in $G$ is replaced by a copy of some graph $K_{2,s}$ in $G'$, with $s\ge 2$. In particular, every set $X'\in V(G')$ intersecting all 4-cycles in $G'$ intersects all copies of graphs $K_{2,s}$ with $s\ge 2$. Moreover, any such set $X'$ in $G'$ can be lifted to a set $X\in V(G)$ of the same size that intersects all bananas of $G$. The result then follows from the application of Corollary \ref{cor:sparsification} to $G'$ with $\ell=5$.
\end{proof}


In all the applications of Corollary~\ref{corollary:banana}, $t$ will be a small constant (2 or 3).

\medskip

The \emph{average degree} of a graph $G=(V,E)$, denoted by $\mathrm{ad}(G)$, is defined as $2|E|/|V|$.  Let us now prove that $\mathcal{O}_k$-free graphs with no $K_{t,t}$ subgraph have bounded average degree. This can also be deduced from the main result of~\cite{KO04}, but we include a short proof for the sake of completeness. Moreover, the decomposition used in the proof  will be used again in the proof of our main result.

\begin{lemma}
  \label{lem:avg-degreeg11}
 Every $\mathcal{O}_k$-free graph $G$ of girth at least 11 has average degree at most $2k$.
\end{lemma}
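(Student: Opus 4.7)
The plan is to prove the contrapositive by induction on $k$: every graph $G$ of girth at least $11$ with $\mathrm{ad}(G) > 2k$ contains $k$ pairwise independent induced cycles, contradicting $\mathcal{O}_k$-freeness. The base case $k = 1$ is immediate, since $\mathrm{ad}(G) > 2$ forces $|E(G)| > |V(G)|$, so $G$ is not a forest and contains an induced cycle.

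For the inductive step, suppose for contradiction that $G$ is a vertex-minimal $\mathcal{O}_k$-free counterexample. Minimality forces $\delta(G) \ge k + 1$: removing any vertex $v$ of degree at most $k$ preserves $\mathrm{ad} > 2k$, contradicting minimality. Let $C$ be a shortest cycle of $G$, of length $\ell \ge 11$. Since girth $\ge 11$ (in particular $\ge 5$) rules out two neighbors on $C$ for any off-$C$ vertex, we get $|N_1| := |N(V(C)) \setminus V(C)| = \sum_{v \in V(C)}(\deg(v) - 2) \ge (k-1)\ell$.

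The key step is the reduction: $G' := G - N[C]$ is $\mathcal{O}_{k-1}$-free, since any $k-1$ pairwise independent induced cycles in $G'$, being disjoint from $N[C]$, are independent from $C$ as well, producing $k$ such cycles in $G$. Applying the induction hypothesis to $G'$ (which inherits girth at least $11$) yields $|E(G')| \le (k-1)(|V(G)| - |N[C]|)$, and combining with $|E(G)| > k |V(G)|$ gives the lower bound
\[
  e_N \;>\; |V(G)| \,+\, (k-1)\,|N[C]|,
\]
where $e_N$ denotes the number of edges of $G$ incident to $N[C]$.

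To derive the contradiction, the plan is to upper-bound $e_N$ using girth-based structural constraints. Decomposing $e_N = e_{\mathrm{in}} + e_{\mathrm{out}}$ with $e_{\mathrm{in}} = |E(G[N[C]])|$, girth $\ge 11$ restricts $e_{\mathrm{in}}$ (an edge between two vertices of $N_1$ whose $C$-neighbors are at arc distance $a$ closes a cycle of length $a+2$, forcing $a \ge 9$ on both sides of $C$, hence $\ell \ge 18$) and $e_{\mathrm{out}}$ (a vertex $w$ outside $N[C]$ with $d$ neighbors in $N_1$ forces the corresponding $C$-neighbors to be at pairwise arc distance $\ge 7$, so $d \le \lfloor \ell/7 \rfloor$). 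Combined with $|N[C]| \ge k\ell$, these local restrictions yield an upper bound on $e_N$ that the plan is to match against the lower bound above. The main obstacle is precisely this final counting: the naive bounds leave a gap once $\ell$ grows large (for instance once $\ell \ge 18$, edges inside $N_1$ reappear), so the argument will have to either split cases on $\ell$ or exploit that many outside vertices share $N_1$-neighbors on a long cycle $C$, concentrating the "slack" provided by girth $11$. This is where the specific constant $11$ enters — it is calibrated so that the geometric separation on $C$ exactly suffices to close the inductive counting.
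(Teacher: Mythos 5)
Your reduction is sound up to and including the inequality $e_N > |V(G)| + (k-1)|N[C]|$, but the proof is not complete, and the tools you propose for the final step cannot close it. Bounding the number of $N_1$-neighbours of an outside vertex by $\lfloor \ell/7\rfloor$ only gives $e_{\mathrm{out}} \le \lfloor\ell/7\rfloor\,(|V(G)|-|N[C]|)$, which is of order $\ell\,|V(G)|$ and can never be matched against a lower bound of order $|V(G)|$; no case split on $\ell$ repairs this. The missing idea is to use the \emph{minimality} of $C$ rather than the girth alone: any path of length at most $5$ between two vertices of $C$ whose interior avoids $C$ closes, together with the shorter arc of $C$, a cycle of length at most $5+\ell/2<\ell$, contradicting the choice of $C$ as a shortest cycle. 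Consequently $N_1$ is an independent set, each vertex of $N_1$ has a unique neighbour on $C$, and — this is the decisive point — each vertex outside $N[C]$ has at most \emph{one} neighbour in $N_1$ (two such neighbours yield either a $4$-cycle or a path of length $4$ between two vertices of $C$). With these facts, $e_N \le \ell + |N_1| + (|V(G)|-|N[C]|) = |V(G)|$, which contradicts your lower bound and finishes the induction. (Two minor slips: an edge inside $N_1$ whose attachment points are at arc distance $a$ closes a cycle of length $a+3$, not $a+2$; and the minimum-degree bound $\delta(G)\ge k+1$, hence $|N_1|\ge(k-1)\ell$, is correct but turns out to be unnecessary once the counting is done as above.)

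For comparison, the paper argues directly rather than via a minimal counterexample: it keeps the second neighbourhood $S$ of $C$ inside the part $R=V(G)\setminus(C\cup N)$ handled by induction (which is $\mathcal{O}_{k-1}$-free because it has no edge to $C$), and uses exactly the minimality facts above to observe that $C$ is the only cycle of $G[C\cup N\cup S]$, so that graph has average degree at most $2$; splitting the edge set into these two pieces gives $\mathrm{ad}(G)\le(2k-2)+2$. Your route, once repaired as indicated, is an equivalent count phrased on a minimal counterexample, but as written it stops short of a proof.
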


\begin{proof}
We proceed by induction on $k$. When $k = 1$, $G$ is a forest, with average degree less than $2$.
  Otherwise, let $C$ be a cycle of minimal length in $G$.
  Let $N$ be the neighborhood of $C$, let $S$ the second neighborhood of $C$, and let $R = V(G) \setminus (C \cup N)$.
  Thus $V(G)$ is partitioned into $C,N,R$, and we have $S \subseteq R$.
  Observe that there are no edges between $C$ and $R$ in $G$, so it follows that $G[R]$ is $\mathcal{O}_{k-1}$-free, and thus $\mathrm{ad}(G[R])\le 2k-2$ by induction. Observe also that since $G$ has girth at least $11$ and $C$ is a minimum cycle, the two sets $N$ and $S$ are both independent sets. Moreover
  each vertex of $N$ has a unique neighbor in $C$, and each vertex in $S$ has a unique neighbor in $N$.
  Indeed, in any other case we obtain a path of length at most 5 between two vertices of $C$, contradicting the minimality of $C$.
  It follows that $C$ is the only cycle in $G[C \cup N \cup S]$, hence this graph has average degree at most $2$.
  As a consequence, $G$ has a partition of its edges into two subgraphs of average degree at most $2k-2$ and at most 2, respectively, and thus $\textrm{ad}(G)\le 2k-2+2=2k$, as desired.
\end{proof}

It can easily be deduced from this result that every $\mathcal{O}_k$-free graph with no $K_{t,t}$ subgraph has average degree at most $h(t,k)$, for some function $h$ (and thus chromatic number at most $h(t,k)+1$). 

\begin{corollary}
  \label{cor:avg-degree}
  There is a function $h$ such that every $\mathcal{O}_k$-free graph with no $K_{t,t}$ subgraph has average degree at most $h(t,k)$, and chromatic number at most  $h(t,k)+1$.
\end{corollary}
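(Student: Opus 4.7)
The strategy is to combine Corollary~\ref{cor:sparsification} (applied with $\ell=11$) with Lemma~\ref{lem:avg-degreeg11}, and then bootstrap the average-degree bound to the chromatic-number bound via degeneracy. Set
\[
h(t,k) \;:=\; 2k + 2\,g(11,t,k),
\]
where $g$ is the function from Corollary~\ref{cor:sparsification}.

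Let $G$ be an $n$-vertex $\mathcal{O}_k$-free graph with no $K_{t,t}$ subgraph. First I would apply Corollary~\ref{cor:sparsification} with $\ell=11$ to obtain a set $X \subseteq V(G)$ with $|X| \le g(11,t,k)$ such that $G-X$ has girth at least~$11$. Since the class of $\mathcal{O}_k$-free graphs with no $K_{t,t}$ subgraph is hereditary, $G-X$ still satisfies both hypotheses, so Lemma~\ref{lem:avg-degreeg11} gives $|E(G-X)| \le k(n-|X|)$. Edges of $G$ incident to $X$ number at most $|X|(n-1) \le g(11,t,k)\cdot n$, hence
\[
|E(G)| \;\le\; k\,n + g(11,t,k)\cdot n,
\]
which yields $\mathrm{ad}(G) \le 2k + 2\,g(11,t,k) = h(t,k)$, the first part of the statement.

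For the chromatic number bound, I would observe that every induced subgraph $H$ of $G$ is still $\mathcal{O}_k$-free and still has no $K_{t,t}$ subgraph, so by the argument above $\mathrm{ad}(H) \le h(t,k)$. In particular, every such $H$ contains a vertex of degree at most $h(t,k)$, i.e., $G$ is $h(t,k)$-degenerate. The standard greedy argument (peel off minimum-degree vertices, then color in reverse order) then gives $\chi(G) \le h(t,k)+1$, completing the proof.

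There is no real obstacle here: the argument is essentially an accounting step combined with the hereditary observation. The only point worth handling carefully is to use a single function $h$ that controls the average degree of \emph{every} induced subgraph (not just of $G$ itself), so that the degeneracy-to-chromatic-number step yields exactly the constant $h(t,k)+1$ stated in the corollary.
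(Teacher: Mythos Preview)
Your proposal is correct and follows essentially the same approach as the paper: apply Corollary~\ref{cor:sparsification} with $\ell=11$, invoke Lemma~\ref{lem:avg-degreeg11} on $G-X$, account for the edges touching $X$, and then use hereditarity to pass from the average-degree bound to degeneracy and hence to the chromatic bound. The only cosmetic difference is in the edge-counting step: the paper writes $\mathrm{ad}(G)\le \mathrm{ad}(G-X)+|X|$ and takes $h(t,k)=2k+g(11,t,k)$, whereas you bound the edges incident to $X$ by $|X|(n-1)$ and arrive at $h(t,k)=2k+2g(11,t,k)$; since the statement only asks for \emph{some} function $h$, this is immaterial (and your accounting is in fact the more careful of the two).
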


\begin{proof}
  Let $G$ be an $\mathcal{O}_k$-free graph that does not contain $K_{t,t}$ as a subgraph. By Corollary \ref{cor:sparsification}, $G$ has a set $X$ of at most $g(11,t,k)$ vertices such that $G-X$ has girth at least 11. Note that $\mathrm{ad}(G)\le \mathrm{ad}(G-X)+|X|\le \mathrm{ad}(G-X)+ g(11,t,k)\le 2k+g(11,t,k)$, where the last inequality follows from \cref{lem:avg-degreeg11}. 
  
  Let $h(t,k)=2k+g(11,t,k)$. As the class of $\mathcal{O}_k$-free graphs with no $K_{t,t}$ subgraph is closed under taking induced subgraphs, it follows that any graph in this class is $h(t,k)$-degenerate, and thus $(h(t,k)+1)$-colorable.
\end{proof}

We would like to note that using a result of \cite{Dvo18}, extending earlier results of~\cite{KO04}, it can be proved that the class of $\mathcal{O}_k$-free graphs with no $K_{t,t}$ subgraph actually has \emph{bounded expansion}, which is significantly stronger than having bounded average degree. This will not be needed in our proofs, and it can also be deduced from our main result, as it implies that sparse $\mathcal{O}_k$-free graphs have logarithmic separators, and thus polynomial expansion.

\medskip

A \emph{feedback vertex set} (FVS) $X$ in a graph $G$ is a set of vertices of $G$ such that $G-X$ is acyclic. The minimum size of a feedback vertex set in $G$ is denoted by $\mathrm{fvs}(G)$.
The  classical Erd\H os-P\'osa theorem~\cite{EP65} states that graphs with few vertex-disjoint cycles have small feedback vertex sets.

\begin{theorem}[Erd\H os and P\'osa \cite{EP65}]\label{thm:EP}
There is a constant $c>0$ such that if a multigraph $G$ contains less than $k$ vertex-disjoint cycles, then $\mathrm{fvs}(G)\le c k \log k$.
\end{theorem}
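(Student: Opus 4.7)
My plan is to prove the contrapositive: there exists an absolute constant $c > 0$ such that every multigraph $G$ with $\mathrm{fvs}(G) > c k \log k$ contains $k$ vertex-disjoint cycles. Equivalently, if $\nu(G) < k$ where $\nu(G)$ is the maximum size of a cycle packing, then $\mathrm{fvs}(G) \le c k \log k$. I would proceed by induction on $k$, with the base case $k=1$ being immediate: $\nu(G) = 0$ means $G$ is a forest and $\mathrm{fvs}(G) = 0$.

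For the inductive step, I would first normalize $G$ to a multigraph $G'$ of minimum degree at least $3$. Two local reductions achieve this without affecting $\nu$ or $\mathrm{fvs}$: delete vertices of degree at most $1$ (which lie in no cycle), and \emph{suppress} any vertex $v$ of degree $2$ by replacing its two incident edges with a single edge between $v$'s two neighbors. Suppression may create multi-edges, which is precisely why the theorem is stated for multigraphs. Any connected component that collapses to an isolated cycle can be set aside and contributes one to both $\mathrm{fvs}$ and $\nu$.

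Next I would use the classical short-cycle lemma: in any multigraph of minimum degree $\ge 3$ on $N$ vertices, a BFS from an arbitrary vertex produces layers whose sizes double until a back-edge closes a cycle, so there is a cycle of length at most $2\log_2 N + O(1)$. Let $C$ be such a short cycle in $G'$. Then $G'' := G' - V(C)$ satisfies $\nu(G'') \le \nu(G') - 1 \le k - 2$, since adding $C$ to any packing of $G''$ yields a larger packing of $G'$. By induction, $\mathrm{fvs}(G'') \le c(k-1)\log(k-1)$, and $\mathrm{fvs}(G') \le \mathrm{fvs}(G'') + |V(C)|$. The target bound $ck\log k$ would then follow from $|V(C)| \le ck\log k - c(k-1)\log(k-1) = c\log k + O(c)$, i.e., from $|V(C)| = O(\log k)$.

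The main obstacle is precisely this last inequality: the BFS lemma only guarantees $|V(C)| = O(\log |V(G')|)$, not $O(\log k)$, and a priori $|V(G')|$ can be arbitrarily large. To close the gap I would prove, as an auxiliary lemma, that any minimum-degree-$3$ multigraph $H$ with $\nu(H) < k$ satisfies $|V(H)| = O(k \log k)$. The natural strategy is an iterative extraction: greedily remove short cycles, re-preprocess to minimum degree $3$, and repeat; each step drops $\nu$ by at least one, so one reaches the empty graph within $k-1$ iterations, and the total vertex count is bounded by a self-referential inequality of the form $|V(H)| \le C \cdot k \log |V(H)|$ which unfolds to $|V(H)| = O(k \log k)$. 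The technical heart is verifying that the cascading effect of re-preprocessing after each removal does not inflate the count, and juggling the constants so that the outer induction closes cleanly with a single absolute~$c$.
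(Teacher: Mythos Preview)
The paper does not prove this statement: it is the classical Erd\H{o}s--P\'osa theorem, quoted from the original reference and used as a black box, so there is no proof in the paper to compare against.

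Evaluated on its own, your sketch has a genuine gap: the auxiliary lemma on which everything hinges is false. You claim that any minimum-degree-$3$ multigraph~$H$ with $\nu(H) < k$ satisfies $|V(H)| = O(k \log k)$. Take $H = K_{3,n}$ with $n \ge 3$: every vertex has degree at least~$3$, and since every cycle must use at least two of the three high-degree vertices we have $\nu(K_{3,n}) = 1$; yet $|V(H)| = n+3$ is unbounded. Your self-referential inequality $|V(H)| \le C \cdot k \log |V(H)|$ fails precisely because the reprocessing cascade can wipe out almost the whole graph: in $K_{3,n}$, deleting a single $4$-cycle leaves $K_{1,n-2}$, a tree, which preprocesses to nothing in one sweep. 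So the step you flag as ``the technical heart'' is not a detail to be verified---it is exactly where the argument breaks. The standard remedies pass through \emph{cubic} multigraphs, where the analogous vertex bound does hold (removing a shortest cycle~$C$ from a $3$-regular graph creates at most~$|C|$ vertices of degree~$2$, since each vertex of~$C$ has exactly one edge leaving~$C$, so the cascade is controlled); one then relates $\mathrm{fvs}(G)$ to the order of an associated cubic multigraph. That reduction---in effect, the handling of high-degree vertices---is the missing ingredient in your plan.
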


We use this result to deduce the following useful lemma.

\begin{lemma}
  \label{lem:cycle-handles}
There is a constant $c>0$ such that the following holds. Let $G$ consist of a cycle $C$, together with $\ell$ paths $P_1,\ldots,P_\ell$ on at least 2 edges 
\begin{itemize}
    \item whose endpoints are in $C$, and
    \item whose internal vertices are disjoint from $C$, and 
    \item such that the internal vertices of each pair of different paths $P_i,P_j$ are pairwise distinct and non-adjacent.
\end{itemize}
  Suppose moreover that $G$ is $\mathcal{O}_k$-free (with $k\ge 2$) and has maximum degree at most $d+2$.
  Then  
  \[ \ell \le c\, d\, k \log k.\]
\end{lemma}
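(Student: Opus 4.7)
The plan is to apply the Erdős–Pósa theorem to a multigraph built from $G$. Define the multigraph $M$ on vertex set $V(C)$ with edge set $E(C)$ together with, for each $i\in[\ell]$, a new edge $e_i$ joining the two endpoints of $P_i$ on $C$. Since contracting each $P_i$ to an edge preserves the degrees of $C$-vertices, $M$ has maximum degree at most $d+2$. A direct edge-counting argument shows that $\ell \leq (d+1)\,\mathrm{fvs}(M)$: if $X$ is a minimum FVS of $M$, then $M-X$ is a forest with at most $|V(M)|-|X|-1$ edges, and at most $(d+2)|X|$ edges of $M$ are incident to $X$, so $|V(M)|+\ell = |E(M)| \leq |V(M)|+(d+1)|X|-1$.

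The key lifting step is the following. Given vertex-disjoint cycles $\gamma_1,\ldots,\gamma_s$ in $M$ that are \emph{$C$-separated} (meaning no edge of $C$ joins $V(\gamma_i)$ to $V(\gamma_j)$ for $i\neq j$), replace each chord edge $e_j$ appearing in any $\gamma_i$ by the corresponding path $P_j$ to obtain cycles $\tilde\gamma_i$ in $G$. The hypotheses on the paths $P_i$ (internal vertices of distinct paths are distinct and pairwise non-adjacent) imply that any edge of $G$ between $V(\tilde\gamma_i)$ and $V(\tilde\gamma_j)$ can only come from an edge of $C$ between $V(\gamma_i)$ and $V(\gamma_j)$, which is excluded by $C$-separation. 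Extracting a shortest induced cycle $\delta_i\subseteq G[V(\tilde\gamma_i)]$ then yields $s$ pairwise vertex-disjoint and pairwise non-adjacent induced cycles, i.e., an induced $\mathcal{O}_s$ in $G$.

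Since $G$ is $\mathcal{O}_k$-free, $M$ admits no family of $k$ $C$-separated vertex-disjoint cycles. To bound $\mathrm{fvs}(M)$, apply \Cref{thm:EP} to $M$: if $K$ is the maximum vertex-disjoint cycle packing number of $M$, then $\mathrm{fvs}(M)\leq c_0 K\log K$ for an absolute constant $c_0$. Fix a maximum family $\gamma_1,\ldots,\gamma_K$ and define the auxiliary graph $H$ on $[K]$ with edge $ij$ iff a $C$-edge joins $V(\gamma_i)$ to $V(\gamma_j)$. Writing $c_i$ for the number of chord edges used by $\gamma_i$, the set $V(\gamma_i)$ is a union of $c_i$ arcs of $C$ with $2c_i$ boundary vertices, giving $\deg_H(i)\leq 2c_i$; as chord edges are partitioned among the $\gamma_i$, $\sum_i c_i\leq\ell$, whence $|E(H)|\leq\ell$. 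A Turán-type inequality produces an independent set in $H$ of size $\Omega(K^2/(K+\ell))$, which by the lifting step must be less than $k$; this yields $K = O(k+\sqrt{k\ell})$. Combining with $\ell \leq (d+1)\mathrm{fvs}(M) \leq c_0(d+1)K\log K$ and solving for $\ell$ gives the desired bound $\ell \leq c\,d\,k\log k$.

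The main obstacle is this final extraction step: a straightforward Turán-type inequality only yields $\ell = O(d^2 k\log^2(dk))$, falling short of the target $O(dk\log k)$. To close the gap one would refine the extraction --- for instance, by first pruning the family of vertex-disjoint cycles to one in which each $c_i$ is bounded by an absolute constant (so that $H$ has bounded maximum degree and admits an independent set of linear size), and showing that such a pruning is always possible whenever $\mathrm{fvs}(M)$ is large.
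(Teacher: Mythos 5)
Your proposal does not reach the stated bound, and you say so yourself: the chain $\ell \le (d+1)\,\mathrm{fvs}(M)$, $\mathrm{fvs}(M) \le c_0 K\log K$, $K = O(k+\sqrt{k\ell})$ closes to $\ell = O(d^2k\log^2(dk))$ rather than $O(dk\log k)$. This is a genuine gap, not a cosmetic one: in the application of the lemma inside the paper, $d$ is itself of order $|S|/\mathrm{poly}(k\log k)$, so a bound quadratic in $d$ is vacuous there. The source of the loss is structural. You apply Erd\H{o}s--P\'osa to the multigraph $M$, whose ordinary cycle packing number $K$ can be as large as $\Theta(\sqrt{k\ell})$ even when $G$ is $\mathcal{O}_k$-free, because vertex-disjoint cycles in $M$ need not be $C$-separated; you then try to recover independence a posteriori via a Tur\'an-type extraction from the auxiliary conflict graph $H$, and that extraction is exactly where the extra factor enters. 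Your suggested repair (pruning the cycle family so each cycle uses $O(1)$ chords) is not carried out and it is not clear it can be: bounding each $c_i$ controls $\deg_H(i)$ but does not by itself force $K=O(k)$.

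The paper closes the gap by paying the factor $d$ \emph{before} invoking Erd\H{o}s--P\'osa, at the level of the paths rather than of the cycles. Each path $P_i$ meets or is adjacent to fewer than $6d$ other paths (at most $2(d-1)$ sharing an endpoint, at most $4d$ with an endpoint adjacent to an endpoint of $P_i$), so one can greedily select $s\ge \ell/(6d)$ pairwise independent paths. In the subgraph $G'$ spanned by $C$ and these $s$ handles, vertex-disjoint cycles are essentially independent, so $\mathcal{O}_k$-freeness forbids $k$ vertex-disjoint cycles outright. Suppressing degree-2 vertices yields a \emph{cubic} multigraph $G''$ on $2s$ vertices with no $k$ vertex-disjoint cycles; Erd\H{o}s--P\'osa gives $\mathrm{fvs}(G'')\le c'k\log k$, while Jaeger's lower bound $\mathrm{fvs}(H)\ge (n+2)/4$ for cubic multigraphs forces $s=O(k\log k)$, hence $\ell\le 6d\cdot O(k\log k)$. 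The two ingredients you are missing are thus the up-front pruning of the paths into a pairwise independent subfamily (so that Erd\H{o}s--P\'osa is applied with parameter $k$ rather than $K$), and the Jaeger-type lower bound that converts ``small feedback vertex set of a cubic multigraph'' into ``few handles''.
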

\begin{proof}
Observe that each path $P_i$ intersects or is adjacent to at most $2(d-1)+4d<6d$ other paths $P_j$:
indeed, if~$P_i$ has endpoints~$x,y$ in~$C$, then there are at most~$2(d-1)$ paths~$P_j$ which intersect~$P_i$ by sharing~$x$ or~$y$ as endpoint,
and at most~$4d$ paths~$P_j$ which are adjacent to~$P_i$ because some endpoint of~$P_j$ is adjacent to either~$x$ or~$y$.
It follows that there exist $s\ge \tfrac{\ell}{6d}$ of these paths, say $P_1,\ldots,P_s$ without loss of generality, that are pairwise non-intersecting and non adjacent.

Consider the subgraph~$G'$ of~$G$ induced by the union of $C$ and the vertex sets of the paths $P_1,\ldots,P_s$.
Since the paths $P_i$, $1\le i \le s$, are pairwise independent,
and since $G'$ does not contain $k$ independent cycles, the graph $G'$ does not contain $k$ vertex-disjoint cycles.
Let $G''$ be the multigraph obtained from $G'$ by suppressing all vertices of degree 2
(i.e., replacing all maximal paths whose internal vertices have degree 2 by single edges).
Observe that since $G'$ does not contain $k$ vertex-disjoint cycles, the graph $G''$ does not contains $k$ vertex-disjoint cycles either.
Observe also that $G''$ is cubic and contains $2s$ vertices.
It was proved by Jaeger~\cite{Jae74} that any cubic multigraph $H$ on $n$ vertices satisfies $\fvs(H)\ge \tfrac{n+2}4$.
As a consequence, it follows from \cref{thm:EP} that  $\frac{2s+2}4 \le \fvs(G'')\le c'k \log k$ (for some constant $c'$), and thus $\ell\le 12 d c' k \log k=c d k\log k$ (for $c=12c'$), as desired.
\end{proof}

A \emph{strict} subdivision of a graph is a subdivision where each edge is subdivided at least once.

\begin{lemma}
  \label{lem:min-degree}
  There is a constant $c>0$ such that for any integer $k\ge 2$, any strict subdivision of a graph of average degree at least $c\, k \log k$ contains a graph of the family $\mathcal{O}_k$ as an induced subgraph.
\end{lemma}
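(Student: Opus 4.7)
\medskip

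\noindent\textbf{Proof plan.} Let $H$ be a graph with $\mathrm{ad}(H)\ge ck\log k$ and let $G$ be a strict subdivision of $H$. The strategy is: first produce $k$ vertex-disjoint cycles in the \emph{unsubdivided} graph $H$ using the Erdős–Pósa theorem (\cref{thm:EP}), and then check that the corresponding cycles in the subdivision $G$ are induced and pairwise independent, so that they form an induced copy of a graph in $\mathcal{O}_k$.

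\medskip

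\noindent\textbf{Step 1: $H$ contains $k$ vertex-disjoint cycles.} Suppose towards a contradiction that $H$ contains strictly fewer than $k$ vertex-disjoint cycles. By \cref{thm:EP}, $H$ admits a feedback vertex set $F$ with $|F|\le c_0 k\log k$ for the absolute constant $c_0$ of that theorem. Then $H-F$ is a forest on at most $|V(H)|$ vertices, so $|E(H-F)|\le|V(H)|-1$. Bounding the edges incident to $F$ by $|F|\cdot (|V(H)|-1)$ gives
\[
|E(H)|\le (|F|+1)(|V(H)|-1),
\]
hence $\mathrm{ad}(H)\le 2(|F|+1)\le 2c_0 k\log k+2$. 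Choosing $c>2c_0+2$ (which is sufficient for all $k\ge 2$, since $k\log k\ge 2\log 2>1$) contradicts the assumption $\mathrm{ad}(H)\ge ck\log k$. Thus $H$ contains $k$ vertex-disjoint cycles $C_1,\dots,C_k$.

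\medskip

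\noindent\textbf{Step 2: Lifting to the subdivision.} For each $i$, let $\tilde C_i$ be the cycle in $G$ obtained from $C_i$ by replacing each edge $uv\in E(C_i)$ by the subdivided $u$–$v$ path in $G$. Since the $C_i$ are vertex-disjoint and edge-disjoint in $H$, and every subdivision vertex of $G$ lies on a unique edge of $H$, the cycles $\tilde C_1,\dots,\tilde C_k$ are pairwise vertex-disjoint in $G$.

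\medskip

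\noindent\textbf{Step 3: Induced and independent.} It remains to verify that the $\tilde C_i$ are induced, and that for $i\neq j$ there is no edge between $\tilde C_i$ and $\tilde C_j$ in $G$. Each subdivision vertex of $G$ has degree exactly $2$ in $G$, with its two neighbors being its neighbors along the subdivided path; so such a vertex sitting on $\tilde C_i$ has both neighbors on $\tilde C_i$ and contributes neither a chord of $\tilde C_i$ nor an edge to any other $\tilde C_j$. An original vertex $v\in V(H)$ lying on $\tilde C_i$ (so $v\in V(C_i)$) has as $G$-neighbors precisely the subdivision vertices on the edges of $H$ incident to $v$. For a chord: such a neighbor lies on $\tilde C_i$ only if the corresponding $H$-edge is in $C_i$, i.e., one of the two edges at $v$ along $C_i$, which is already a $\tilde C_i$-neighbor of $v$. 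For a cross-edge to $\tilde C_j$: such a neighbor lies on $\tilde C_j$ only if the $H$-edge lies in $C_j$, impossible since $v\notin V(C_j)$. Finally, two distinct original vertices of $H$ are never adjacent in $G$ because the subdivision is \emph{strict}, so no original edge survives. This yields the required induced copy of $C_1\sqcup\cdots\sqcup C_k\in\mathcal{O}_k$ in $G$.

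\medskip

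\noindent\textbf{Main obstacle.} The content of the lemma lies entirely in Step~1, which is a standard consequence of Erdős–Pósa paired with the trivial edge bound on graphs of bounded feedback vertex set; the only thing to be careful about is tuning the constant $c$ so that the contradiction goes through uniformly in $k\ge 2$. Step~3 is routine bookkeeping but relies essentially on the hypothesis that every edge of $H$ is subdivided at least once.
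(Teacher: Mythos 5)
Your proof is correct and follows essentially the same route as the paper: apply Erd\H{o}s--P\'osa to show that a graph with fewer than $k$ vertex-disjoint cycles has average degree $O(k\log k)$, then observe that $k$ vertex-disjoint cycles in $H$ lift to an induced $\mathcal{O}_k$ in any strict subdivision. The paper states the lifting step as a one-line observation; your Step 3 just makes that bookkeeping explicit.
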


\begin{proof}
Note that if a graph $G$ contains $k$ vertex-disjoint cycles, then any strict subdivision of $G$ contains an induced $\mathcal{O}_k$. Hence, it suffices to prove that any graph with less than $k$ vertex-disjoint cycles has average degree at most  $ck\log k$, for some constant $c$. By \cref{thm:EP}, there is a constant $c'$ such that any graph $G$ with less than $k$ vertex-disjoint cycles contains a set $X$ of at most $c' k\log k$ vertices such that $G-X$ is acyclic. In this case $G-X$ has average degree at most 2, and thus $G$ has average degree at most $c' k \log k+2\le c k\log k$ (for some constant $c$), as desired.
\end{proof}

\section{Logarithmic treewidth of sparse $\mathcal{O}_k$-free graphs}\label{sec:statement}
Recall our main result.
\mainthm*

The proof of \cref{thm:mainthm} relies on \emph{the cycle rank},
which is defined as $r(G) = \card{E(G)} - \card{V(G)} + \card{C(G)}$
where~$C(G)$ denotes the set of connected components of~$G$.
The cycle rank is exactly the number of edges of~$G$ which must be deleted to make~$G$ a forest,
hence it is a trivial upper bound on the size of a minimum feedback vertex set.
Remark the following simple properties.
\begin{lemma}
  The cycle rank is invariant under the following operations:
  \begin{enumerate}
    \item Deleting a vertex of degree~1.
    \item Deleting a connected component which is a tree (and in particular, deleting a vertex of degree~0).
  \end{enumerate}
  \label{lem:cycle-rank}
\end{lemma}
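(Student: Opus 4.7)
The plan is to verify both parts by a direct change-of-variables computation from the definition $r(G) = \card{E(G)} - \card{V(G)} + \card{C(G)}$: for each operation I would track how each of the three quantities $\card{V}$, $\card{E}$, $\card{C}$ changes and check that the three changes cancel.

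For part~(1), I would set $G' = G - v$ where $v$ has degree exactly~$1$, with unique neighbor $u$. Then $\card{V(G')} = \card{V(G)} - 1$ and $\card{E(G')} = \card{E(G)} - 1$, since $uv$ is the only edge incident to $v$. The substantive point is that $\card{C(G')} = \card{C(G)}$, i.e.\ that removing $v$ does not split its component. This follows because any two vertices $x, y$ of that component distinct from $v$ are joined in $G$ by a path $P$, and if $P$ visited $v$ it would have to enter and leave $v$ through $u$, which is impossible in a simple path; so $P$ can be chosen to avoid $v$. The three contributions then cancel, giving $r(G') = r(G)$.

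For part~(2), I would let $T$ be the removed tree-component and $G' = G - V(T)$. Using the standard identity $\card{E(T)} = \card{V(T)} - 1$ (which also covers the single-vertex case, where both sides equal $0$), we get $\card{V(G')} = \card{V(G)} - \card{V(T)}$, $\card{E(G')} = \card{E(G)} - (\card{V(T)} - 1)$, and $\card{C(G')} = \card{C(G)} - 1$. Substituting into the definition yields $r(G') - r(G) = -(\card{V(T)} - 1) + \card{V(T)} - 1 = 0$, as required. The parenthetical about deleting an isolated vertex is the special case $\card{V(T)} = 1$ of this computation, and is also subsumed by part~(1) taken as a boundary case.

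The argument is elementary and I do not foresee a real obstacle; the only place one must be mildly careful is the connectedness step in part~(1), where a naive formulation might suggest that $v$ could be a cut vertex — but this is immediately ruled out by $\deg(v) = 1$.
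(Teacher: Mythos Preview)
Your argument is correct. The paper actually states this lemma without proof, treating it as an immediate observation from the definition of cycle rank; your direct bookkeeping of $\card{V}$, $\card{E}$, $\card{C}$ is exactly how one verifies it, and the connectedness step in part~(1) is handled properly. One tiny quibble: your closing remark that deleting an isolated vertex is ``subsumed by part~(1) taken as a boundary case'' is not quite right, since a degree-$0$ vertex is not a degree-$1$ vertex; but you have already covered it cleanly as the $\card{V(T)}=1$ instance of part~(2), so this throwaway sentence can simply be dropped.
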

We call \emph{reduction} the operation of iteratively deleting vertices of degree~0 or~1,
which preserves cycle rank by the above lemma.
A graph is \emph{reduced} if it has minimum degree at least~2,
and the \emph{core} of a graph~$G$ is the reduced graph obtained by applying reductions to~$G$ as long as possible.
The inclusion-wise minimal FVS of~$G$ and of its core are exactly the same.

In a graph~$G$, a vertex~$x$ is called $\eps$-\emph{rich} if $d(x) \ge \eps \cdot r(G)$.
Our strategy to prove \cref{thm:mainthm} is to iteratively reduce the graph, find an $\eps$-rich vertex, add it to the FVS and delete it from the graph.
The following lemma shows that the cycle rank decreases by a constant factor each iteration,
implying that the process terminates in logarithmically many steps.
\begin{lemma}\label{lem:deletion-cyclerank}
  In a reduced $\mathcal{O}_k$-free graph, deleting a vertex of degree $d$
  decreases the cycle rank by at least $\frac{d-k+1}{2}$.
\end{lemma}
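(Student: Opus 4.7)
My plan is to compute the decrease $r(G)-r(G-v)$ exactly in terms of how $v$'s component of $G$ splits, then to bound the number of resulting pieces using the two hypotheses: $\mathcal{O}_k$-freeness and reducedness.

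First, I would set up the computation. Suppose $v$ lies in a component $K$ of $G$ and that $K \setminus \{v\}$ decomposes into components $A_1, \ldots, A_c$ of $G-v$ (the other components of $G$ are unaffected). A direct calculation from the definition $r = |E| - |V| + |C|$ gives $r(G) - r(G-v) = d - c$. So the claim reduces to proving $c \leq \tfrac{d+k-1}{2}$.

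Next, I would partition the $A_i$ into $c_2$ pieces that contain a cycle and $c_1$ pieces that are trees, with $c = c_1 + c_2$. Since the $A_i$ are distinct components of $G-v$, no edge of $G$ joins two different $A_i$ (such an edge would survive in $G-v$ and force them into the same component). Hence picking one cycle inside each cycle-containing $A_i$ yields $c_2$ pairwise vertex-disjoint and pairwise non-adjacent cycles in $G$, and $\mathcal{O}_k$-freeness then forces $c_2 \leq k-1$.

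For the tree components, reducedness is the key ingredient. A single-vertex $A_i$ is impossible, since that vertex would have degree $1$ in $G$. In a tree $A_i$ on at least two vertices, every leaf has degree $1$ in $A_i$; but any non-neighbor $u$ of $v$ inside $A_i$ satisfies $d_{A_i}(u) = d_G(u) \geq 2$, so cannot be a leaf. Consequently every leaf of such a tree is a neighbor of $v$, and since trees have at least two leaves, each tree $A_i$ contributes at least two edges incident to $v$, whereas each cycle-containing $A_i$ contributes at least one. Therefore $d \geq 2c_1 + c_2 = 2c - c_2$, which together with $c_2 \leq k-1$ gives $c \leq \tfrac{d + c_2}{2} \leq \tfrac{d+k-1}{2}$, and hence $r(G) - r(G-v) = d - c \geq \tfrac{d-k+1}{2}$.

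The only delicate point I anticipate is the tree case: one must use reducedness carefully both to exclude single-vertex components and to pin down that all leaves of a tree $A_i$ must be neighbors of $v$. Once that is secured, the rest is a clean double count pairing the $\mathcal{O}_k$-bound on $c_2$ with the reducedness-bound on $c_1$.
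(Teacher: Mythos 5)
Your proof is correct and follows essentially the same route as the paper: compute the drop as $d-c$, bound the number of cycle-containing components by $k-1$ via $\mathcal{O}_k$-freeness, and use reducedness to show each tree component sends at least two edges to the deleted vertex. The only difference is that you spell out the single-vertex case and the leaf argument in more detail than the paper does.
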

\begin{proof}
  In any graph~$G$, deleting a vertex~$x$ of degree~$d$ decreases the cycle rank by~$d-c$,
  where~$c$ is the number of connected components of~$G-x$ which contain a neighbor of~$x$.
  If~$G$ is $\mathcal{O}_k$-free, then all but at most~$k-1$ components of~$G-x$ are trees.
  Furthermore, if~$T$ is a connected component of~$G-x$ which is a tree,
  then~$T$ must be connected to~$x$ by at least two edges,
  as otherwise~$T$ must contain a vertex of degree~1 in~$G$,
  which should have been deleted during reduction.
  Thus we have
  \begin{equation}
    2c - (k-1) \le d.
  \end{equation}
  Therefore the cycle rank decreases by at least $d - \frac{d+k-1}{2} = \frac{d-k+1}{2}$ as desired.
\end{proof}

The existence of rich vertices is given by the following result.
\begin{theorem}
  \label{thm:Okfree-richvertex}
  For any~$k$, there is some $\eps_k > 0$ such that
  any $\mathcal{O}_k$-free graph with girth at least~$11$ has an $\eps_k$-rich vertex.
\end{theorem}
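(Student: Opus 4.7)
My plan is to prove Theorem~\ref{thm:Okfree-richvertex} by induction on $k$, following the architecture sketched in Section~\ref{sec:statement}. The base case $k = 1$ is immediate: a $\mathcal{O}_1$-free graph is a forest, so $r(G) = 0$ and every vertex is $\varepsilon_1$-rich for any choice of $\varepsilon_1 > 0$.

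For the inductive step, I would first apply reductions to assume $G$ has minimum degree at least~$2$, and then re-use the decomposition from the proof of Lemma~\ref{lem:avg-degreeg11}: take a shortest cycle $C$, set $N = N(C) \setminus C$, and let $R = V(G) \setminus (C \cup N)$. The girth-$11$ hypothesis together with the minimality of $C$ forces $N$ to be independent, each vertex of $N$ to have a unique neighbor in $C$, and, crucially, $G[R]$ to be $\mathcal{O}_{k-1}$-free (any $k-1$ pairwise independent cycles in $G[R]$ would combine with $C$ to form a copy of $\mathcal{O}_k$ in $G$). From here I would dichotomize on the cycle rank of $G[R]$: if $r(G[R])$ is a sufficiently large fraction of $r(G)$, the inductive hypothesis applied to $G[R]$ yields a vertex of degree at least $\varepsilon_{k-1} r(G[R])$ that is then $\varepsilon_k$-rich in $G$, provided that $r(G)$ and $r(G[R])$ can be related by a manageable accounting of the edges inside~$C$ and between $N$ and $C \cup R$. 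Otherwise $G[R]$ is ``close to'' a forest, and after a small amount of cleanup one may reduce to the simplified setting of Section~\ref{sec:statement} where $G$ consists of a cycle $C$, an independent set $N$, and an actual forest~$R$.

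In this simplified setting, the plan announced in the introduction is to apply the cutting argument of Section~\ref{sec:cutting} to further reduce to the case where the components of $G[R]$ are all paths or all well-behaved subdivided stars, to use the restrictions imposed by $\mathcal{O}_k$-freeness on the adjacencies between these components and $N$ (Section~\ref{sec:tsp_new}), and finally to extract a vertex of linear degree in $r(G)$ (Section~\ref{sec:main}). The main obstacle will be this last synthesis: showing that the $\mathcal{O}_k$-freeness constraints on the attachments of the paths or stars to $N$ force either a small total number of such components, or a vertex of $N$ (or of $R$) concentrating a constant fraction of the attachment edges. I expect to rely on Lemma~\ref{lem:cycle-handles} to control the number of pairwise independent paths that can be glued onto~$C$, on Corollary~\ref{cor:avg-degree} to enforce sparsity of the auxiliary graphs built along the way, and on Corollary~\ref{corollary:banana} to discard parallel connections through $N$. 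The resulting constant $\varepsilon_k$ will then depend on $\varepsilon_{k-1}$ and on the structural constants of the cutting and attachment analyses, presumably in a super-exponential manner in~$k$.
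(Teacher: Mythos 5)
Your base case and the setup of the decomposition $C,N,R$ match the paper, and the observation that $G[R]$ is $\mathcal{O}_{k-1}$-free is correct. The gap is in your inductive dichotomy: either $r(G[R])$ is a constant fraction of $r(G)$ (apply induction), or $G[R]$ is ``close to a forest'' and a small cleanup reduces to the forest case of Theorem~\ref{thm:richvertex}. The second branch does not work as stated, for two reasons. First, $r(G[R])$ being small does not mean the cycle rank of $G$ lives in $G[C\cup N\cup F]$ for some forest $F\subseteq R$: a constant fraction of $r(G)$ can be carried by the edges between $N$ and the $2$-core $R'$ of $G[R]$, attached at degree-$2$ vertices of $R'$. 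These edges contribute neither to $r(G[R])$ nor to the cycle rank of the graph induced on $C\cup N\cup(R\setminus R')$, so in this regime neither the induction hypothesis nor Theorem~\ref{thm:richvertex} applies to any obvious induced subgraph. Second, the natural cleanups fail: deleting a feedback \emph{edge} set of $G[R]$ does not preserve $\mathcal{O}_k$-freeness (which concerns induced subgraphs, so removing an edge between two cycles can create an independent pair), and deleting a feedback \emph{vertex} set of $G[R]$ of size $r(G[R])$ can destroy up to $r(G[R])\cdot\Delta(G)$ of the cycle rank, which under the contradiction hypothesis $\Delta(G)<\eps_k\, r(G)$ is only affordable when $r(G[R])=O(1)$ --- not when it is merely a small fraction of $r(G)$.

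This middle regime is precisely what the paper's proof of Theorem~\ref{thm:Okfree-richvertex} is devoted to. It peels $R$ into a forest $F$ (whose components attach to the rest of $R$ only through their roots) and the core $R'=R\setminus F$, classifies the attachment edges $A$ between $N\cup F$ and $R'$ according to whether a vertex of $R'$ receives one or several of them ($A_1$ versus $A_2$) and according to its degree in $G[R']$ ($X$ versus $Y$), and in the hardest case extracts the union $H'$ of ``direct paths'' through degree-$2$ attachment vertices, which can be added to $F$ while keeping a forest, so that Theorem~\ref{thm:richvertex} applies to $G[C\cup N\cup F\cup H']$ with cycle rank still $\Omega(r(G)/k)$. (The $A_2$ case additionally needs the $2k$-degeneracy of $G$ from Lemma~\ref{lem:avg-degreeg11} to select an induced forest $B_3\subseteq B_2$ carrying a $1/(k+1)$ fraction of the attachment edges.) None of this machinery is present or replaced in your proposal, so the reduction to the forest case --- the actual content of this theorem, given Theorem~\ref{thm:richvertex} --- is missing; your remaining paragraphs restate the paper's roadmap for the forest case rather than supply the missing reduction.
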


Let us first prove \cref{thm:mainthm} using \cref{thm:Okfree-richvertex}.
\begin{proof}[Proof of \cref{thm:mainthm}]
  Fix~$k$ and~$t$.
  Given a graph $G$ which is $\mathcal{O}_k$-free and does not contain $K_{t,t}$ as a subgraph,
  we apply \cref{lem:sparsification} to obtain a set~$X$ of size at most~$f(11,t,k)$
  such that $G' \eqdef G - X$ has girth at least~11.
  Thus, it suffices to prove the result for~$G'$, and finally add~$X$ to the resulting FVS of~$G'$.
  Since $\log r(G')\le \log {{|V(G')|}\choose 2}\le 2\log  |V(G')|$,
  we have reduced the problem to the following.
  \begin{claim}
    For any~$k$, there is a constant~$c_k$ such that
    if~$G$ is an $\mathcal{O}_k$-free graph with girth at least~11, then~$\fvs(G) \le c_k \cdot \log r(G)$. 
  \end{claim}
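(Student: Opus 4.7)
The plan is to prove the claim by strong induction on $r(G)$, using \cref{thm:Okfree-richvertex} as the structural engine and treating the cycle rank as a potential that drops by a constant factor at each step. At each iteration I extract a high-degree vertex, add it to the feedback vertex set under construction, and delete it; \cref{lem:deletion-cyclerank} guarantees that doing so shrinks the cycle rank multiplicatively, so only $O(\log r(G))$ iterations are needed to reach the base case.

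Concretely, I would fix $\eps = \eps_k$ from \cref{thm:Okfree-richvertex}, fix a threshold $r_0 = 2(k-1)/\eps$, and then fix $c_k$ large enough to handle two conditions at once: the base case $r(G) < r_0$, where the trivial bound $\fvs(G) \leq r(G) < r_0$ suffices as soon as $c_k$ is larger than some constant depending on $r_0$ (the isolated cases $r(G) \in \{0,1\}$ being handled directly since the corresponding graphs are forests or have a single cycle), and the inductive step, which will require $c_k \geq 1/\log\tfrac{1}{1-\eps/4}$. For the inductive step, given $G$ with $r(G) \geq r_0$, I would first pass to the core $G^*$. By \cref{lem:cycle-rank}, reduction preserves the cycle rank, and one checks it also preserves the minimum FVS; moreover $G^*$ inherits $\mathcal{O}_k$-freeness and girth at least $11$ as an induced subgraph of $G$. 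Applying \cref{thm:Okfree-richvertex} to $G^*$ yields a vertex $x$ of degree $d \geq \eps \cdot r(G)$. Since $G^*$ is reduced, \cref{lem:deletion-cyclerank} applies and gives
\[
r(G^* - x) \;\leq\; r(G) - \tfrac{\eps r(G) - k + 1}{2} \;\leq\; (1 - \eps/4)\, r(G),
\]
using $r(G) \geq r_0$. As $G^* - x$ is still $\mathcal{O}_k$-free and of girth at least $11$ (both properties being closed under induced subgraphs), the induction hypothesis gives $\fvs(G^* - x) \leq c_k \log((1-\eps/4)\, r(G))$, and hence $\fvs(G) \leq 1 + \fvs(G^* - x) \leq c_k \log r(G)$ by the choice of $c_k$, which closes the induction.

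The real obstacle in the overall strategy is of course \cref{thm:Okfree-richvertex} itself, which is the structural heart of the paper and is developed in the remaining sections; the argument for the present claim is, once that theorem is granted, a purely potential-based induction. The only care required is in the bookkeeping around reduction, verifying that the cycle rank drop is genuinely multiplicative (and thus that logarithmically many rich-vertex extractions suffice), and choosing the constants $r_0$ and $c_k$ so that the base case and the inductive step are simultaneously satisfied.
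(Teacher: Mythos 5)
Your proof is correct and follows essentially the same route as the paper: pass to the core, extract an $\eps_k$-rich vertex via \cref{thm:Okfree-richvertex}, use \cref{lem:deletion-cyclerank} to get a constant-factor multiplicative drop in cycle rank above a threshold $r_0 = \Theta(k/\eps_k)$, and finish with the trivial bound $\fvs \le r$ below the threshold; the paper merely phrases the induction as an iterative process terminating after logarithmically many steps. The constant bookkeeping ($r_0$, the requirement $c_k \ge 1/\log\tfrac{1}{1-\eps_k/4}$) matches the paper's up to inessential differences.
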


  Let us now assume that~$G$ is as in the claim, and consider its core~$H$, for which $r(H)=r(G)$ and $\fvs(H)=\fvs(G)$.
  Consider an $\eps_k$-rich vertex~$x$ in~$H$ with~$\eps_k$ as in \cref{thm:Okfree-richvertex}.
  If~$r(G) \ge 2k \cdot \eps_k^{-1}$, then~$d(x) \ge 2k$,
  hence by Lemma~\ref{lem:deletion-cyclerank}, deleting $x$ decreases the cycle rank of $G$ by at least
  \begin{equation}
    \frac{d(x)-k+1}{2} \ge \frac{d(x)}{4} \ge \frac{\eps_k}{4} r(G).
  \end{equation}

  Thus, as long as the cycle rank is more than $2k \cdot \eps_k^{-1}$,
  we can find a vertex whose deletion decreases the cycle rank by a constant multiplicative factor.
  After logarithmically many steps, we have $\fvs(G) \le r(G) \le 2k \cdot \eps_k^{-1}$.
  In the end, the feedback vertex set consists of at most~$f(11,t,k)$ vertices in~$X$,
  logarithmically many rich vertices deleted in the induction,
  and at most $2k \cdot \eps_k^{-1}$ vertices for the final graph.
\end{proof}

We now focus on proving \cref{thm:Okfree-richvertex}.
Let~$G$ be an $\mathcal{O}_k$-free graph with girth at least~11.
Consider~$C$ a shortest cycle of~$G$, $N$ the neighborhood of~$C$,
and $R \eqdef G - (C \cup N)$ the rest of the graph (see Figure~\ref{fig:structure}). 
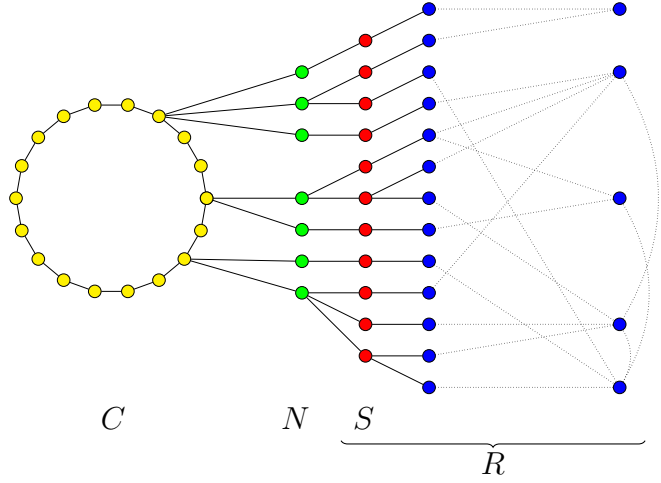
\begin{figure}[t]
\centering
\resizebox{250pt}{!}{
\begin{tikzpicture}
\def\mw{0.05}
\foreach \i in {%
0,20,40,60,80,100,120,140,160,180,200,220,240,260,280,300,320,340,360}{\node[draw,circle,fill=yellow,minimum width=\mw cm] (u\i) at (\i:3cm){} ;}
\foreach \i [evaluate=\i as \j using int(\i+20)] in {%
0,20,40,60,80,100,120,140,160,180,200,220,240,260,280,300,320,340}{
\draw (u\i)--(u\j);}
\foreach \i in {%
4,5,6}
{\node[draw,circle,fill=green,minimum width=\mw cm] (v\i) at (6,\i-2){} ;}
\foreach \i in {%
0,1,2,3}
{\node[draw,circle,fill=green,minimum width=\mw cm] (v\i) at (6,\i-3){} ;}
\foreach \i in {%
0,1,2,3,...,10}
{\node[draw,circle,fill=red,minimum width=\mw cm] (w\i) at (8,\i-5){} ;}
\foreach \i/\j in {%
60/6,60/5,60/4,0/3,0/2,320/1,320/0}{
\draw(u\i) -- (v\j);
}
\foreach \i/\j in {%
0/0,0/1,0/2,1/3,2/4,3/5,3/6,4/7,5/8,5/9,6/10}{
\draw (v\i) -- (w\j);
}
\foreach \i in {%
0,1,2,3,...,12}{
\node[draw,circle,fill=blue,minimum width=\mw cm](z\i) at (10,\i-6){};
}
\foreach \i/\j in {%
0/0,0/1,1/2,2/3,3/4,4/5,5/6,5/7,6/8,7/9,8/10,9/11,10/12}{
\draw(w\i) -- (z\j);
}
\foreach \i in {%
0,2,6,10,12}{
\node[draw,circle,fill=blue,minimum width=\mw cm](p\i) at (16,\i-6){};
}
\foreach \i/\j in {%
0/0,1/2,2/2,3/10,4/0,5/6,6/2,8/6,7/10,8/10,9/10,10/0,11/12,12/12}{
\draw[dotted] (z\i)--(p\j);
}
\draw[dotted] (p0) to [bend right = 30] (p2);
\draw[dotted] (p0) to [bend right = 30] (p6);
\draw[dotted] (p2) to [bend right = 30] (p10);
\end{tikzpicture}
}

\begin{tikzpicture}
\node at (0,0){};
\node at (1,0){$C$};
\node at (3.4,0){$N$};
\node at (4.3,0){$S$};
\draw [decorate,
    decoration = {brace}] (8,-0.3) --  (4,-0.3);
\node at (6,-0.6){$R$};    
\end{tikzpicture}
\caption{Subgraph of an $\mathcal{O}_4$-free graph $G$. 
$V(G)$ is partitioned into three sets $C,N,R$, where $C$ is a shortest cycle, $N$ is an independent set and first neighborhood of $C$, and $R$ is $\mathcal{O}_{3}$-free. 
$S$ is the second neighborhood of $N$. Gray lines correspond to induced paths where all internal vertices have degree 2.} 
%
\label{fig:structure}
\end{figure}
Remark that there is no edge between $C$ and $R$, hence $G[R]$ is an $\mathcal{O}_{k-1}$-free graph.
As a special case, if $k=2$, then~$G[R]$ is a forest.
We will show that in general, it remains possible to reduce the problem to the case where~$G[R]$ is a forest,
which is our main technical theorem.
\begin{theorem}
  For any~$k$, there is some $\delta_k > 0$ such that
  if~$G$ is a connected $\mathcal{O}_k$-free graph with girth at least~$11$,
  and furthermore~$G[R]$ is a forest where~$R$ is as in the decomposition described above,
  then~$G$ has a $\delta_k$-rich vertex.
 \label{thm:richvertex}
\end{theorem}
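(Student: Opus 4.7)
The plan is to prove the contrapositive: assume that no vertex of $G$ has degree at least $\delta_k\cdot r(G)$ for a suitably small constant $\delta_k$, and derive a contradiction with $\mathcal{O}_k$-freeness by exhibiting $k$ pairwise independent induced cycles. Since $G$ is connected, the girth hypothesis forces $N$ to be independent with each vertex of $N$ having a unique neighbor on $C$, and $G[R]$ is a forest by hypothesis. A direct edge count then gives
\[ r(G) \;=\; |E(N,R)| \;-\; c(R) \;+\; 1, \]
where $c(R)$ is the number of connected components of $G[R]$. Thus the target reduces to producing a vertex whose degree is a constant fraction of $|E(N,R)|$, under the constraint that the forest $G[R]$ attaches back into $C$ only through the independent set $N$.

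The first part of the argument would be the structural reduction announced in \cref{sec:cutting}: I would reduce to the situation where every component of $G[R]$ is either a long induced path, or a ``nice'' subdivided star with a large number of branches around a well-defined central vertex $z_T$. A tree of $G[R]$ that does not already have one of these shapes contains either a branching vertex of high degree (which is then a candidate rich vertex through its $R$-degree plus $N$-attachments) or a non-trivial banana-like structure; the latter is controlled using \cref{corollary:banana}, and the former lets us extract the announced path-or-star skeleton after removing a bounded number of exceptional vertices.

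Next I would implement the content of \cref{sec:tsp_new}: restrict the ways $N$ can attach to each component. For a path component, each pair of $N$-attachments, together with the cycle $C$, produces a cycle with a handle in the sense of \cref{lem:cycle-handles}; too many pairwise-independent such handles would yield $k$ pairwise independent induced cycles. Applying \cref{lem:cycle-handles} (after controlling the maximum degree via the absence of a rich vertex) thus caps the number of attachments per path by $O(d\,k\log k)$. For a subdivided-star component, a similar analysis using the branches in place of the cycle $C$ caps the number of branches carrying an $N$-attachment, \emph{unless} many attachments concentrate on the central vertex $z_T$ or on a single neighbor in $N$, which again produces a rich vertex.

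Finally, summing the per-component bounds over all trees of $G[R]$ gives $|E(N,R)| \le c(R)\cdot \mathrm{poly}(d,k)$, where $d$ is the maximum degree of $G$; combined with the identity $r(G)=|E(N,R)|-c(R)+1$ and with an upper bound on $c(R)$ coming from the fact that each tree must have at least one $N$-attachment (otherwise $G$ is disconnected), we obtain $r(G) \le \mathrm{poly}(d,k)$, which contradicts $d < \delta_k r(G)$ once $r(G)$ is larger than a constant depending on $k$ (the small-$r(G)$ case being absorbed into $\delta_k$). The main obstacle I expect is the subdivided-star case: $N$-attachments can interact with several branches at once, branches can be short, and one has to carve out $k$ \emph{pairwise independent} induced cycles despite the potentially bushy interaction pattern; the ``well-behaving'' qualification in \cref{sec:cutting} is presumably exactly the set of technical hypotheses that make this extraction tractable, and getting those hypotheses right (without losing too many trees to the exceptional set) is the delicate part of the proof.
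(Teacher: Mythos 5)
Your opening moves match the paper's: argue by contradiction assuming maximum degree below $\delta_k\, r(G)$, use the identity $r(G)=|E(N,R)|-c(R)+1$ (equivalently the paper's claim that $|S|\ge r(G)$), and reduce $G[R]$ to a disjoint union of clean paths or high-degree subdivided stars. But your final accounting is where the argument breaks. Bounding the number of $N$-attachments \emph{per component} by $\mathrm{poly}(d,k)$ and summing gives $|E(N,R)|\le c(R)\cdot\mathrm{poly}(d,k)$, which combined with $r(G)=|E(N,R)|-c(R)+1$ yields nothing, because $c(R)$ cannot be bounded: the main case of the proof is precisely $\Omega(|S|/\mathrm{poly}(k))$ disjoint path components, each with exactly two attachments to $N$. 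There the per-component bound is trivially satisfied, yet $r(G)\approx c(R)$ is as large as you like. Relatedly, \cref{lem:cycle-handles} does not cap attachments per path; it says a cycle of maximum degree $d+2$ carries at most $O(dk\log k)$ pairwise independent handles, and the paper uses it in the opposite direction: since there are $\Omega(|S|/\mathrm{poly}(k))$ handles (the paths themselves, after contracting each vertex of $N$ into its neighbor on $C$), some single vertex $u^*$ of $C$ must have $\Omega(|S|/\mathrm{poly}(k))$ path endpoints in its second neighborhood. That \emph{concentration} on one vertex, not a global count, is what drives the conclusion.

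The second gap is the mechanism for extracting the contradiction once the interaction pattern is dense. Your sketch for the star case (``a similar analysis using the branches in place of the cycle $C$'') is not how it works: the paper's star-star and star-path lemmas build an auxiliary graph on the star centers, with edges recording shared leaves or routes through the other side; the banana covering (\cref{corollary:banana}) ensures this auxiliary graph is (essentially) simple after removing a bounded hitting set, and then \cref{lem:min-degree} --- a strict subdivision of a graph of average degree at least $ck\log k$ contains a member of $\mathcal{O}_k$ --- forces either a rich vertex inside the banana hitting set or a forbidden induced $\mathcal{O}_k$. This subdivision argument, together with the final application of the star-tree lemma to the stars centered at $N(u^*)$ after the concentration step, is the heart of the proof and is absent from your plan.
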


\Cref{thm:richvertex} will be proved in \Cref{sec:main}.
In the remainder of this section, we assume \cref{thm:richvertex}
and explain how \cref{thm:Okfree-richvertex} can be deduced from it.
\begin{proof}[Proof of \cref{thm:Okfree-richvertex}]
  The proof is by induction on~$k$.
  Let~$\delta_k > 0$ be as in \cref{thm:richvertex},
  and let~$\eps_{k-1} > 0$ be as in \cref{thm:Okfree-richvertex}, obtained by induction hypothesis.
  We fix
  \begin{equation}
    \eps_k \eqdef \min\left\{
      \frac{\eps_{k-1}}{20},
      \frac{\delta_k}{20},
      \frac{\delta_k}{5(k+1)},
      \frac{1}{30(k-2)}
    \right\}.
  \end{equation}

  Let~$G$ be any $\mathcal{O}_k$-free graph with girth at least~11.
  Reductions preserve all the hypotheses of the claim, and the value of~$r(G)$,
  hence we can assume~$G$ to be reduced.
  Consider the decomposition~$C,N,R$ as previously described.
  We construct a subset $F \subset R$ inducing a~rooted forest in $G$ such that
  the only edges from~$F$ to $R \setminus F$ are incident to roots of~$F$,
  and each root of~$F$ is incident to at most one such edge.
  \begin{claim}
    If $F \subset R$ has the former property and $F' \subset R \setminus F$ induces a forest in $G$,
    then $F \cup F'$ induces a forest in $G$.
    \label{claim:roots-F}
  \end{claim}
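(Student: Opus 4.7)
The plan is to argue by contradiction: assume $G[F \cup F']$ contains a cycle $\Gamma$ and derive an impossibility from the structural assumptions on~$F$. Since $G[F]$ and $G[F']$ are both forests, $\Gamma$ cannot lie entirely in either part, so it must alternate between them: traversing $\Gamma$ cyclically gives a decomposition into maximal ``arcs'' in $F$ and maximal arcs in $F'$, with an even positive number of transition edges connecting the two parts.

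The key observation is that every transition edge has, by hypothesis, its $F$-endpoint at a root of $F$. So each maximal $F$-arc of $\Gamma$ has both of its endpoints at roots of~$F$. I would then split into two cases according to the length of such an $F$-arc. If the $F$-arc has length at least one, its two endpoints are distinct roots of~$F$ connected by a path inside $F$; but then these roots would lie in the same tree of the rooted forest~$F$, contradicting the fact that each tree has a unique root. If instead the $F$-arc is reduced to a single vertex~$r$, then $r$ is a root of $F$ incident to two distinct transition edges going into $R \setminus F$, contradicting the hypothesis that each root of~$F$ is incident to at most one edge of $G$ to $R \setminus F$.

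Both cases being impossible, no cycle $\Gamma$ can exist, so $G[F \cup F']$ is a forest. I do not anticipate any serious obstacle: the proof is a direct bookkeeping argument, and the only subtlety is remembering to handle the degenerate case where an $F$-arc is a single vertex, which is precisely where the ``at most one edge per root'' hypothesis gets used.
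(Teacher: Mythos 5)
Your proof is correct and rests on exactly the same two facts as the paper's one-line argument (each component of $G[F]$ has a unique root, and that root sends at most one edge to $R\setminus F$); you have merely unpacked the paper's direct observation into a proof by contradiction via a cycle decomposition into $F$-arcs and $F'$-arcs. The case analysis is complete, including the degenerate single-vertex arc, so nothing is missing.
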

  \begin{proof}
    Each connected component of~$G[F]$ has a single root, which is the only vertex which can be connected to~$F'$.\cqed
  \end{proof}
  We construct~$F$ inductively, starting with an empty forest,
  and applying the following rule as long as it applies:
  if~$x \in R \setminus F$ is adjacent to at most one vertex in $R \setminus F$,
  we add~$x$ to~$F$, and make it the new root of its connected component in~$F$.
  The condition on $F$ obviously holds for $F = \emptyset$.
  When adding~$x$, by \cref{claim:roots-F}, $F \cup \{x\}$ is still a forest.
  Furthermore, if~$y \in F \cup \{x\}$ is adjacent to $R \setminus (F \cup \{x\})$,
  then either $y = x$ or~$y$ was a root before the addition of~$x$, and is not adjacent to~$x$,
  and therefore~$x$ and~$y$ are in distinct connected components of~$F \cup \{x\}$.
  In either case, $y$ is a root of~$F \cup \{x\}$ as required.

  We now denote by~$F$ the forest obtained when the previous rule no longer applies, and let $R' = R \setminus F$. As observed by a reviewer, $R'$ and $F$ can be  defined equivalently by saying that $R'$ is the core of $G[R]$ and $F$ is equal to $R\setminus R'$ (however the procedure described above will be  useful in order to prove the next claims).
  Remark that it might be the case that~$F = R$,
  meaning that~$G[R]$ is a forest (and we fall in the case of \cref{thm:richvertex}),
  or $F = \emptyset$, which means that~$G[R]$ has minimum degree at least~2.
  \begin{claim}
    \label{clm:min-degree-R}
    All vertices in~$G[R']$ have degree at least 2.
  \end{claim}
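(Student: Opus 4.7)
The plan is to observe that this claim is an immediate consequence of the termination condition of the iterative construction that defined $F$ and $R'$. Recall that the construction rule adds to $F$ any vertex $x \in R \setminus F$ that has at most one neighbor in $R \setminus F$, and the process continues until no such vertex exists. Hence, at termination, every $x \in R \setminus F = R'$ must have at least two neighbors in $R \setminus F = R'$, for otherwise the rule would still apply to $x$. Since ``at least two neighbors in $R'$'' is exactly the statement that $\deg_{G[R']}(x) \ge 2$, this is precisely the claim.

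Equivalently, as the authors remark just before the claim, the set $R'$ coincides with the core of $G[R]$, i.e.\ the subgraph obtained by iteratively deleting vertices of degree at most one inside $G[R]$. By definition of the core, or by applying the reduction procedure described before \Cref{lem:cycle-rank} to $G[R]$, the resulting graph has minimum degree at least two, which is exactly what we want to prove.

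There is essentially no obstacle here, and the only thing worth checking is that the iterative construction actually terminates. This is immediate: each application of the rule strictly enlarges $F$ inside the finite set $R$, so after at most $|R|$ steps no further vertex can be added, and the stable configuration satisfies the required degree condition on $R'$.
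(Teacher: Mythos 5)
Your argument is correct and is exactly the paper's proof: the claim follows immediately because any vertex of $R'$ with fewer than two neighbors in $R'$ would still trigger the rule and be moved into $F$. The extra remarks about the core characterization and termination are fine but not needed.
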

  \begin{proof}
    A vertex of degree less than~2 in~$G[R']$ should have been added to~$F$.\cqed
  \end{proof}
  \begin{claim}
    \label{clm:CNF-connected}
    The graph~$G[C \cup N \cup F]$ is connected.
  \end{claim}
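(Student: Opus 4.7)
The plan is to prove the claim by induction on the order in which vertices are added to $F$ by the construction. Let $v_1, \dots, v_m$ be that order, and set $F_i = \{v_1, \dots, v_i\}$. I would establish that $G[C \cup N \cup F_i]$ is connected for every $0 \le i \le m$, the case $i = m$ giving the claim. Recall that at this point in the overall argument $G$ is connected and reduced (so every vertex has degree at least $2$), and that $R = V(G) \setminus (C \cup N)$ contains no neighbor of $C$, so there are no edges between $C$ and $R$.

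For the base case $i = 0$, the cycle $C$ is connected, and by the very definition of $N$ as the neighborhood of $C$, every vertex of $N$ has a neighbor in $C$; hence $G[C \cup N]$ is connected.

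For the inductive step, assume $G[C \cup N \cup F_{i-1}]$ is connected and consider $v_i$. By the rule defining the construction, at the moment $v_i$ is added it has at most one neighbor in the then-current $R \setminus F_{i-1}$. Since $v_i \in R$ has no neighbor in $C$, all its $G$-neighbors lie in $N \cup F_{i-1} \cup (R \setminus F_{i-1})$, with at most one in the last summand. Because $G$ is reduced, $\deg_G(v_i) \ge 2$, so $v_i$ has at least one neighbor $u \in N \cup F_{i-1} \subseteq C \cup N \cup F_{i-1}$. By the induction hypothesis $u$ lies in the connected graph $G[C \cup N \cup F_{i-1}]$, and attaching $v_i$ to it through the edge $v_i u$ yields that $G[C \cup N \cup F_i]$ is connected.

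I do not foresee any real obstacle: the whole argument pivots on two elementary facts, namely the at-most-one-external-neighbor condition guaranteed by the construction rule and the bound $\deg_G(v_i) \ge 2$ given by reducedness. The possible subtlety of multiple components or multiple roots in $G[F]$ never enters the picture, because the inductive ordering lets us treat each new vertex as attaching to an already built connected subgraph.
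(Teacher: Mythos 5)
Your proof is correct and rests on the same two facts as the paper's argument (reducedness gives $\deg_G(v)\ge 2$, and the construction rule bounds the number of neighbors in $R\setminus F$ by one), merely packaged as an induction on the insertion order rather than, as in the paper, an analysis of each final component of $G[F]$ via its root or a non-root leaf. Both versions are fine; yours avoids the paper's small case distinction between singleton and larger components.
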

  \begin{proof}
    It suffices to show that each connected component $T$ of~$G[F]$ is connected to~$N$. Each such component $T$ is a tree. If $T$ consists of a single vertex $v$, then $v$ is the root of $T$ and has at most one neighbor in $R'$ by definition. Since~$G$ is reduced, $v$ has degree at least~2 in~$G$, hence it must be connected to~$N$.

    If $T$ contains at least two vertices, then it contains at least 2 leaves, and in particular at least one leaf $v$ which is not the root of $T$. The vertex $v$ has a single neighbor in $R$ (its parent in $T$), and thus similarly as above it must have a neighbor in $N$.\cqed
  \end{proof}

  Define~$B$ as the set of vertices of~$R'$ adjacent to $N \cup F$,
  and let~$A$ be the set of edges between $N \cup F$ and~$B$.

  \begin{claim}
    If~$\card{A} \le \frac{9}{10} r(G)$, then~$G$ has an $\eps_k$-rich vertex.
  \end{claim}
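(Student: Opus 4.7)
The plan is to relate $r(G)$, $r(G_1)$, $r(G_2)$, $|A|$ via an additive decomposition. After reducing to $G$ connected (by applying the argument to the component of largest cycle rank, possibly at the cost of degrading $\eps_k$ by a bounded factor), the connectedness of $G_1 = G[C \cup N \cup F]$ furnished by Claim~\ref{clm:CNF-connected} yields the cycle-rank identity
\[
r(G) \;=\; r(G_1) \,+\, r(G_2) \,+\, |A| \,-\, m, \quad m \eqdef c(G_2),
\]
by standard edge/vertex/component counting. Combined with the hypothesis $|A| \le \tfrac{9}{10} r(G)$, this gives $r(G_1) + \bigl(r(G_2) - m\bigr) \ge \tfrac{1}{10} r(G)$.

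Second, I would bound $m \le k - 2$ using $\mathcal{O}_k$-freeness. Each component of $G_2$ has minimum degree at least $2$ by Claim~\ref{clm:min-degree-R} and hence contains a cycle; the resulting $m$ cycles are pairwise vertex-disjoint and pairwise non-adjacent (as distinct components of the induced subgraph $G_2$), and each is non-adjacent to $C$ (since $R \cap N(C) = \emptyset$). Together with $C$, they form $m + 1$ pairwise independent cycles in $G$, so $\mathcal{O}_k$-freeness forces $m + 1 \le k - 1$.

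The previous inequalities put us in one of two cases. In \emph{Case~A} ($r(G_1) \ge \tfrac{1}{20} r(G)$), the graph $G_1$ is $\mathcal{O}_k$-free, connected, of girth at least $11$, and its ``$R$-part'' $F$ induces a forest, so \cref{thm:richvertex} applies and produces a $\delta_k$-rich vertex $x$ of $G_1$; then $d_G(x) \ge d_{G_1}(x) \ge \delta_k r(G_1) \ge \tfrac{\delta_k}{20} r(G) \ge \eps_k r(G)$ by the term $\tfrac{\delta_k}{20}$ in the definition of $\eps_k$. In \emph{Case~B} ($r(G_2) - m \ge \tfrac{1}{20} r(G)$, hence $r(G_2) \ge \tfrac{1}{20} r(G)$), the subgraph $G_2 = G[R']$ is $\mathcal{O}_{k-1}$-free (any $k-1$ independent cycles in $G_2$, combined with $C$, would form $k$ independent cycles in $G$) and of girth at least $11$, so the induction hypothesis of \cref{thm:Okfree-richvertex} supplies an $\eps_{k-1}$-rich vertex $x$ of $G_2$, and $d_G(x) \ge d_{G_2}(x) \ge \eps_{k-1} r(G_2) \ge \tfrac{\eps_{k-1}}{20} r(G) \ge \eps_k r(G)$ via the $\tfrac{\eps_{k-1}}{20}$ term.

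The main obstacle is correctly handling the possible disconnectedness of $G_2$ when invoking the induction hypothesis. The cleanest argument treats $G_2$ as a single graph and applies \cref{thm:Okfree-richvertex} for $k-1$ directly; however, if the induction only yields a vertex rich with respect to a single component of $G_2$, then one must pick the largest component $H^*$, whose cycle rank is at least $r(G_2)/m \ge \tfrac{r(G)}{20(k-2)}$ thanks to the bound $m \le k-2$, and absorb the factor $k - 2$ using the $\tfrac{1}{30(k-2)}$ term in the definition of $\eps_k$. A similar constant-chase is needed at the initial reduction to $G$ connected. Otherwise the structural argument — that small $|A|$ forces one of $r(G_1)$, $r(G_2)$ to be large, each of which yields a rich vertex by a previous result — is quite direct.
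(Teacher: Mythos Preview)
Your proof is correct and reaches the same conclusion as the paper, but you take a more circuitous route and introduce complications that are not needed.

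The paper's argument is simpler: deleting the edge set~$A$ from~$G$ decreases the cycle rank by at most~$|A|$ (this holds for any graph, connected or not), so $r(G-A) \ge r(G) - |A| \ge \tfrac{1}{10} r(G)$. Since $G_1 = G[C \cup N \cup F]$ and $G_2 = G[R']$ are unions of connected components of~$G-A$, one has $r(G-A) = r(G_1) + r(G_2)$, and the case split follows immediately. This bypasses your exact identity $r(G) = r(G_1) + r(G_2) + |A| - m$, which is correct but forces you to assume $G$ connected and to track~$m = c(G_2)$. In fact your identity, combined with $m \ge 0$, recovers exactly the paper's inequality $r(G_1) + r(G_2) \ge r(G) - |A|$; the bound $m \le k-2$ is never needed.

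Your ``main obstacle'' paragraph is a non-issue: \cref{thm:Okfree-richvertex} is stated for arbitrary (not necessarily connected) $\mathcal{O}_{k-1}$-free graphs of girth at least~$11$, so the induction hypothesis applies to~$G_2$ as a whole and yields a vertex that is $\eps_{k-1}$-rich with respect to~$r(G_2)$, not merely to a single component. Likewise, the initial reduction to connected~$G$ is unnecessary (and, as you phrase it, somewhat delicate: restricting to a component need not preserve the hypothesis $|A| \le \tfrac{9}{10} r(G)$). Dropping both detours, your argument collapses to the paper's.
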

  \begin{proof}
    Deleting~$A$ from~$G$ decreases the cycle rank by at most~$\card{A}$, hence $r(G-A) \ge r(G)/10$.
    Since~$G[C \cup N \cup F]$ and~$G[R']$ are unions of connected components of~$G-A$, we have
    \[ r(G-A) = r(G[C \cup N \cup F]) + r(G[R']). \]
    Thus either~$G[C \cup N \cup F]$ or~$G[R']$ has cycle rank at least~$r(G) / 20$.
    If it is~$G[C \cup N \cup F]$, then we can apply~\cref{thm:richvertex} to find a $(\delta_k/20)$-rich vertex,
    and if it is~$G[R']$, then we can apply the induction hypothesis to find an $(\eps_{k-1}/20)$-rich vertex.
    In either case, this gives an $\eps_k$-rich vertex.\cqed
  \end{proof}
  Thus we can now assume that $\card{A} \ge \frac{9}{10} r(G)$.

\smallskip

  Let~$B_1$, resp.~$B_2$, be the set of vertices of~$B$ incident to exactly one, resp.\ at least two edges of~$A$,
  and let~$A_1,A_2 \subseteq A$ be the set of edges of~$A$ incident to~$B_1,B_2$ respectively.
  Remark that~$A_1,A_2$ and~$B_1,B_2$ partition~$A$ and~$B$ respectively, and~$\card{A_1} = \card{B_1}$.
  \begin{claim}
    If~$\card{A_2} \ge \frac{4}{9} \card{A}$, then~$G$ has an $\eps_k$-rich vertex.
  \end{claim}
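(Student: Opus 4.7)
My plan is to localize the argument inside the induced subgraph $\tilde G \eqdef G[C \cup N \cup F \cup B_2]$ and to apply \cref{thm:richvertex} to it. First, the hypothesis gives
$$|A_2| \;\ge\; \tfrac{4}{9}|A| \;\ge\; \tfrac{4}{9}\cdot\tfrac{9}{10}\,r(G) \;=\; \tfrac{2}{5}\,r(G),$$
and since every vertex of $B_2$ has at least two $A_2$-edges incident to it, $|B_2| \le |A_2|/2$.

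Now $\tilde G$ is $\mathcal{O}_k$-free and of girth at least $11$ as an induced subgraph of $G$, and it is connected since $G[C \cup N \cup F]$ is connected (by \cref{clm:CNF-connected}) and each vertex of $B_2$ attaches to it through some $A_2$-edge. Counting edges and vertices yields
$$r(\tilde G) \;\ge\; r(G[C \cup N \cup F]) + |A_2| - |B_2| \;\ge\; \tfrac{|A_2|}{2} \;\ge\; \tfrac{r(G)}{5}.$$
In $\tilde G$, $C$ remains a shortest cycle (any cycle of $\tilde G$ is a cycle of $G$, hence of length at least $|C|$) and $N$ its neighborhood, so the decomposition of \cref{thm:richvertex} has ``rest'' $\tilde R = F \cup B_2$. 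If $\tilde G[\tilde R]$ were a forest, \cref{thm:richvertex} would supply a vertex $v$ with $d_G(v) \ge d_{\tilde G}(v) \ge \delta_k\, r(\tilde G) \ge \tfrac{\delta_k}{5}\, r(G)$, which is $\eps_k$-rich because $\eps_k \le \delta_k/20$.

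The main obstacle is that $\tilde G[F \cup B_2]$ need not be a forest: edges inside $G[B_2]$ (which may exist since $B_2 \subseteq R'$ has minimum degree at least $2$ in $G[R']$), or multiple edges between a single $B_2$-vertex and one tree of $F$, may create cycles. I would dispose of this by first treating the easy averaging cases: if some vertex of $B_2 \cup (N \cup F)$ is incident to at least $\eps_k\, r(G)$ edges of $A_2$, then it is already $\eps_k$-rich and we are done. Otherwise, the two bipartite maximum-degree bounds allow a greedy trimming of $B_2$ to a subset $B_2^\star$ for which $G[C \cup N \cup F \cup B_2^\star]$ satisfies the forest hypothesis, at the cost of discarding at most $O(\eps_k\, r(G))$ edges of $A_2$ per ``bad'' structure; using $\mathcal{O}_k$-freeness to bound the number of bad structures a single $B_2$-vertex can participate in by something linear in $k$, the trimmed subgraph retains cycle rank $\Omega\!\bigl(r(G)/(k+1)\bigr)$ while now meeting the hypothesis of \cref{thm:richvertex}. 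Applying \cref{thm:richvertex} to it then produces a vertex of degree at least $\delta_k\, r(G)/(5(k+1))$, which is $\eps_k$-rich by the inequality $\eps_k \le \delta_k/(5(k+1))$ in the definition of $\eps_k$.
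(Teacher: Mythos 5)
Your overall skeleton matches the paper's: restrict to an induced subgraph of the form $G[C\cup N\cup F\cup B']$ for some $B'\subseteq B_2$, use \cref{clm:CNF-connected} and the fact that each vertex of $B_2$ carries at least two $A_2$-edges to lower-bound the cycle rank of this subgraph by $\card{A'}-\card{B'}\ge\card{A'}/2$ (where $A'$ is the set of retained $A_2$-edges), and invoke \cref{thm:richvertex}. You also correctly identify the one real obstacle: $B_2$ need not induce a forest in $G$, so \cref{claim:roots-F} does not make $F\cup B_2$ a forest, and \cref{thm:richvertex} cannot be applied to $G[C\cup N\cup F\cup B_2]$ directly.

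However, your resolution of that obstacle is a gap, not a proof. The ``greedy trimming'' is never specified, and the two quantitative claims propping it up do not hold as stated. First, a bound on the maximum $A_2$-degree of a vertex (a bipartite degree between $B_2$ and $N\cup F$) says nothing about the edges inside $G[B_2]$, which are precisely the ones creating cycles. Second, the number of ``bad structures'' through a single $B_2$-vertex is not bounded by anything linear in $k$: $\mathcal{O}_k$-freeness bounds the number of pairwise \emph{independent} cycles, not the number of cycles through a vertex, and $G[B_2]$ can contain a vertex lying on arbitrarily many cycles. What is actually needed is a subset $B_3\subseteq B_2$ that induces a forest in $G$ and still carries an $\Omega(1/k)$ fraction of the edges of $A_2$. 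The paper obtains this in one line from sparsity: by \cref{lem:avg-degreeg11}, $G$ (girth at least $11$, $\mathcal{O}_k$-free) is $2k$-degenerate, hence its vertex set can be partitioned into $k+1$ induced forests; restricting this partition to $B_2$ and taking the part maximizing the number of incident $A_2$-edges gives $B_3$ with $\card{A_3}\ge\card{A_2}/(k+1)$, after which $F\cup B_3$ is a forest by \cref{claim:roots-F} and \cref{thm:richvertex} yields a vertex of degree at least $\delta_k\card{A_3}/2\ge\eps_k\, r(G)$. This degeneracy/vertex-arboricity step is the missing idea; without it (or an equivalent), your argument does not go through.
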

  \begin{proof}
    Assume that $\card{A_2} \ge \frac{4}{9} \card{A}$, and thus $\card{A_2} \ge \frac{2}{5} r(G)$.
    By \cref{lem:avg-degreeg11}, $G$ is $2k$-degenerate, hence it can be vertex-partitioned into $k+1$ forests.
    Consider this partition restricted to~$B_2$, and choose $B_3 \subseteq B_2$ which induces a forest and maximizes the set~$A_3\subseteq A_2$ of edges incident to~$B_3$.
    Thus~$\card{A_3} \ge \card{A_2} / (k+1) \ge \frac{2}{5(k+1)} r(G)$.
    By \cref{claim:roots-F}, $F \cup B_3$ is a forest,
    hence \cref{thm:richvertex} applies to~$G[C \cup N \cup F \cup B_3]$.

    By \cref{clm:CNF-connected}, $G[C \cup N \cup F]$ is connected,
    thus adding the vertices~$B_3$ and the edges~$A_3$ increases the cycle rank by $\card{A_3} - \card{B_3}$.
    This quantity is at least~$\card{A_3}/2$ since any vertex of~$B_3$ is incident to at least two edges of~$A_3$,
    and each edge of~$A_3$ is incident to exactly one vertex of~$B_3$.
    Thus \cref{thm:richvertex} yields the existence of a vertex of degree at least
    \begin{equation}
      \delta_k \cdot r(G[C \cup N \cup F \cup B_3])
      \ge \frac{\card{A_3}}{2} \delta_k
      \ge \frac{1}{5(k+1)} \delta_k \cdot r(G)
      \ge \eps_k \cdot r(G)
    \end{equation}
    as desired.\cqed
  \end{proof}
  Thus we can now assume that $\card{A_1} \ge \frac{5}{9} \card{A}$,
  and thus $\card{B} \ge \frac{5}{9} \card{A} \ge \frac{1}{2} r(G)$.

\smallskip

  Let~$X$, resp.~$Y$, be the set of vertices of~$B$ with degree at least~3, resp.\ exactly 2, in $G[R']$.
  By \cref{clm:min-degree-R}, this is a partition of~$B$.
  \begin{claim}
    If $\card{X} \ge \card{B}/5$, then~$G$ has an $\eps_k$-rich vertex.
  \end{claim}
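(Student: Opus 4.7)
The plan is to peel off this case by applying the induction hypothesis on $k$, namely \cref{thm:Okfree-richvertex} for $k-1$, directly to the graph $G[R']$. For this to work I need two things: that $G[R']$ satisfies the hypotheses of \cref{thm:Okfree-richvertex}, and that its cycle rank is a constant fraction of $r(G)$.

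The first point is straightforward. Since there are no edges between $C$ and $R$ in $G$, any induced collection of $k-1$ pairwise independent cycles in $G[R'] \subseteq G[R]$ could be extended by $C$ to contradict the $\mathcal{O}_k$-freeness of $G$, so $G[R']$ is $\mathcal{O}_{k-1}$-free. Also, $G[R']$ inherits girth at least $11$ from $G$, since girth is monotone under taking induced subgraphs (when the induced subgraph is not acyclic).

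For the cycle rank bound I would carry out a handshake count inside $G[R']$. By \cref{clm:min-degree-R} every vertex of $R'$ has degree at least~$2$ in $G[R']$, and by the hypothesis of the claim each of the $\card{X} \ge \card{B}/5$ vertices of $X$ has degree at least~$3$ there. Summing degrees then gives $2\card{E(G[R'])} \ge 2\card{R'} + \card{X}$, hence $r(G[R']) \ge \card{E(G[R'])} - \card{R'} \ge \card{X}/2$. Combining with $\card{X} \ge \card{B}/5$ and the previously-established bound $\card{B} \ge r(G)/2$ yields $r(G[R']) \ge r(G)/20$.

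Applying the induction hypothesis to $G[R']$ then furnishes a vertex $v \in R'$ of degree at least $\eps_{k-1} \cdot r(G[R']) \ge \eps_{k-1} \cdot r(G)/20$ in $G[R']$, and hence also in $G$. By the choice $\eps_k \le \eps_{k-1}/20$ made at the start of the proof of \cref{thm:Okfree-richvertex}, this vertex is $\eps_k$-rich in $G$, as required. I do not foresee any genuine obstacle in this case: the whole structural reduction leading up to this claim was designed precisely so that a linear fraction of the cycle rank survives the restriction to $G[R']$ whenever many vertices of $B$ have high degree inside $R'$, and the calibration of $\eps_k$ in the outer proof is set up to absorb the factor $1/20$.
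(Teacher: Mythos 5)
Your proof is correct and follows essentially the same route as the paper: the same handshake/degree-sum lower bound $r(G[R']) \ge \card{X}/2 \ge r(G)/20$ using \cref{clm:min-degree-R}, followed by the induction hypothesis on the $\mathcal{O}_{k-1}$-free graph $G[R']$ and the calibration $\eps_k \le \eps_{k-1}/20$.
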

  \begin{proof}
    Assume that $\card{X} \ge \card{B}/5$, and thus $\card{X} \ge \frac{1}{10} r(G)$.

    The cycle rank is lower-bounded by the following sum:
    \begin{equation}
      \label{eq:edge-vertex-degree}
      r(G[R']) \ge \card{E(G[R'])} - \card{R'} = \frac12\sum_{x \in R'} (d_{G[R']}(x) - 2).
    \end{equation}
    By \cref{clm:min-degree-R}, every term in the sum is non-negative,
    and each~$x \in X$ contributes by at least~$1/2$ to the sum.
    Thus $r(G[R']) \ge \card{X}/2 \ge \frac{1}{20} r(G)$,
    and the induction hypothesis applied to~$G[R']$ (which is $\mathcal{O}_{k-1}$-free)
    yields an $(\eps_{k-1} / 20)$-rich vertex, which is also $\eps_k$-rich.\cqed
  \end{proof}
  Thus we can now assume that $\card{Y} \ge \frac{4}{5} \card{B} \ge \frac{2}{5} r(G)$.

\smallskip

  Let~$Z$ be the set of vertices of~$R'$ that either are in~$Y$ or have degree at least~3 in $G[R']$.
  Remark that~$Z$ is exactly the set of vertices of~$R'$ with degree at least~3 in~$G$.
  In~$G[R']$, a~\emph{direct path} is a path whose endpoints are in~$Z$, and whose internal vertices are not in~$Z$.
  In particular, internal vertices of a direct path have degree~$2$.
  A direct path need not be induced, as its endpoints may be adjacent.
  As a degenerate case, we consider a cycle that contains a single vertex of~$Z$ to be a direct path whose two endpoints are equal.
  One can naturally construct a~multigraph~$G_Z$
  with vertex set~$Z$ and whose edges correspond to direct paths in~$G[R']$.
  Remark that vertices of~$Z$ have the same degree in~$G_Z$ and in~$G[R']$.

  Any $y \in Y$ has two neighbors~$x_1,x_2$ in~$G_Z$.
  In degenerate cases, it may be that $x_1 = x_2 \neq y$ (multi-edge in~$G_Z$),
  in which case~$G[R']$ contains a banana between~$y$ and~$x_1$,
  or that $x_1 = x_2 = y$ (loop in~$G_Z$),
  in which case there is a cycle~$C_y$ which is a connected component of~$G[R']$,
  and such that~$y$ is the only vertex of~$Z$ in~$C_y$.
  We partition $Y$ into~$Y_i,Y_e$ as follows: for~$y,x_1,x_2$ as above,
  if $x_1,x_2 \in Y$, then we place~$y$ in~$Y_i$,
  and otherwise ($x_1$ or~$x_2$ is in~$Z \setminus Y$) we place~$y$ in~$Y_e$.


  \begin{claim}
    If $\card{Y_e} \ge \frac{3}{4}\card{Y}$, then~$G$ has an $\eps_k$-rich vertex.
  \end{claim}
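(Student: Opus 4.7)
The plan is to reduce the statement to the induction hypothesis of \cref{thm:Okfree-richvertex} applied to $G[R']$, which is $\mathcal{O}_{k-1}$-free and inherits girth at least~$11$. The key is to establish the lower bound $r(G[R']) \ge r(G)/20$; once this is known, any $\eps_{k-1}$-rich vertex $v$ in $G[R']$ satisfies
\[ d_G(v) \ge d_{G[R']}(v) \ge \eps_{k-1} \cdot r(G[R']) \ge \tfrac{\eps_{k-1}}{20} r(G) \ge \eps_k \cdot r(G), \]
where the last inequality uses the definition $\eps_k \le \eps_{k-1}/20$. The intuition is that the largeness of $Y_e$ forces many vertices of $G[R']$-degree at least~$3$ to be present in $Z \setminus Y$, which in turn blows up the cycle rank of $G[R']$ through inequality~\eqref{eq:edge-vertex-degree}.

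The central computation is a double count of the pairs $(y,x)$ with $y \in Y_e$, $x \in Z \setminus Y$, and $x$ a $G_Z$-neighbor of $y$. By the definition of $Y_e$, every $y \in Y_e$ contributes at least one such pair, so the number of pairs is at least $\card{Y_e} \ge \tfrac{3}{4}\card{Y} \ge \tfrac{3}{10} r(G)$. On the other hand, for each $x \in Z \setminus Y$, the number of pairs with second coordinate $x$ is bounded by the multi-degree $d_{G_Z}(x)$, which equals $d_{G[R']}(x)$. Hence
\[ \card{Y_e} \le \sum_{x \in Z \setminus Y} d_{G[R']}(x). \]
Since $d_{G[R']}(x) \ge 3$ for every $x \in Z \setminus Y$, we have $d_{G[R']}(x) - 2 \ge \tfrac{1}{3} d_{G[R']}(x)$, so $\sum_{x \in Z \setminus Y}(d_{G[R']}(x) - 2) \ge \card{Y_e}/3 \ge r(G)/10$. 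Every vertex of $R' \setminus (Z \setminus Y)$, i.e.\ every vertex in $Y$ or in $R' \setminus Z$, has $G[R']$-degree exactly~$2$ by \cref{clm:min-degree-R} and the definition of $Z$, so \eqref{eq:edge-vertex-degree} applied to $G[R']$ gives
\[ r(G[R']) \ge \tfrac{1}{2}\sum_{x \in Z \setminus Y}(d_{G[R']}(x) - 2) \ge r(G)/20, \]
which closes the argument.

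The main point requiring care is the double counting in the multigraph $G_Z$: degenerate direct paths produce loops (when a component of $G[R']$ is a cycle meeting $Z$ in a single vertex) and multi-edges (bananas in $G[R']$). These are harmless here because the pair $(y,x)$ is treated as unordered set membership rather than as a choice of incident edge, and because loops at $x \in Z \setminus Y$ are the only way $x$ can be its own $G_Z$-neighbor but $x \notin Y_e$, so they contribute nothing to the pair count. Beyond this bookkeeping, the rest of the argument is an immediate application of \eqref{eq:edge-vertex-degree} and the inductive hypothesis, making this case arguably the easiest among those treated in the proof of \cref{thm:Okfree-richvertex}.
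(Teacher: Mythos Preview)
Your proof is correct and follows essentially the same route as the paper: bound $\sum_{x \in Z \setminus Y} d_{G[R']}(x) \ge \card{Y_e}$, use $d_{G[R']}(x) - 2 \ge d_{G[R']}(x)/3$ for $x \in Z \setminus Y$, plug into~\eqref{eq:edge-vertex-degree} to get $r(G[R']) \ge r(G)/20$, and invoke the induction hypothesis. Your additional commentary on loops and multi-edges in~$G_Z$ is harmless but unnecessary here, since the inequality $\card{Y_e} \le \sum_{x \in Z \setminus Y} d_{G_Z}(x)$ only requires that each $y \in Y_e$ have at least one $G_Z$-neighbor in $Z \setminus Y$, which is immediate from the definition.
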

  \begin{proof}
    Assume $\card{Y_e} \ge \frac{3}{4}\card{Y}$, and thus $\card{Y_e} \ge \frac{3}{10} r(G)$.

    By definition, any vertex of~$Y_e$ is adjacent in~$G_Z$ to some vertex of~$Z \setminus Y$.
    Thus, using that $d_{G_Z}(z) = d_{G[R']}(z)$ for any~$z \in Z$, we obtain
    \begin{equation}
      \label{eq:sum-degrees-Z}
      \sum_{z \in Z \setminus Y} d_{G[R']}(z) \ge \card{Y_e}.
    \end{equation}
    Recall inequality~\eqref{eq:edge-vertex-degree} on cycle rank:
    \begin{equation}
      r(G[R']) \ge \frac12\sum_{x \in R'} (d_{G[R']}(x) - 2).
    \end{equation}
    By \cref{clm:min-degree-R}, the terms of this sum are non-negative.
    Thus, restricting it to~$Z \setminus Y$, we have
    \begin{equation}
      r(G[R']) \ge \frac12\sum_{z \in Z \setminus Y} (d_{G[R']}(z) - 2).
    \end{equation}
    By definition of~$Z$, vertices of~$Z \setminus Y$ have degree at least~$3$ in~$G[R']$.
    Thus, each term of the previous sum satisfies $d_{G[R']}(z) - 2 \ge d_{G[R']}(z)/3$.
    It follows using~\eqref{eq:sum-degrees-Z} that
    \begin{equation}
      r(G[R']) \ge  \frac12\sum_{z \in Z \setminus Y} \frac{d_{G[R']}(z)}{3}
      \ge \frac{\card{Y_e}}{6} \ge \frac{1}{20} r(G).
    \end{equation}
    Thus the induction hypothesis applied to~$G[R']$ (which is $\mathcal{O}_{k-1}$-free)
    yields an $(\eps_{k-1} / 20)$-rich vertex, which is also $\eps_k$-rich.\cqed
  \end{proof}
  Thus we can now assume that $\card{Y_i} \ge \frac{1}{4}\card{Y} \ge \frac{1}{10} r(G)$.

\smallskip

  We now consider the induced subgraph~$H$ of~$G[R']$ consisting of~$Y$, and direct paths joining vertices of~$Y$.
  Thus~$H$ has maximum degree~$2$, and since~$G[R']$ is $\mathcal{O}_{k-1}$-free,
  at most~$k-2$ components of~$H$ are cycles, the rest being paths.
  Remark that the endpoints of paths in~$H$ correspond exactly to~$Y_e$.
  Also, each connected component of~$H$ must contain at least one vertex of~$Y$.

  We perform the following cleaning operations in  order:
  \begin{itemize}
    \item \label{item:split} In each cycle of~$H$, pick an arbitrary vertex and delete it,
      so that all connected components are paths.
    \item \label{item:reduce} Iteratively delete a vertex of degree $0$ or $1$ which is not in~$Y$,
      so that the endpoints of paths are all in~$Y$.
    \item \label{item:isolated} Delete all isolated vertex.
  \end{itemize}
  Let~$H'$ be the subgraph of~$H$ obtained after these steps.

  \begin{claim}
    \label{clm:internal-vertices-H}
    All but~$3(k-2)$ vertices of~$Y_i$ are internal vertices of paths of~$H'$.
  \end{claim}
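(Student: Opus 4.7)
The plan is to classify the vertices of $Y_i$ by their position in $H$ and then track them through each of the three cleaning operations. First I would observe that every $Y$-vertex has degree exactly $2$ in $G[R']$, and that the degree of a $Y$-vertex $y$ in $H$ equals the number of incident direct paths whose other endpoint is also in $Y$. Consequently $Y_i$-vertices have degree $2$ in $H$ while $Y_e$-vertices have degree $1$. Since $H$ has maximum degree $2$ and its internal direct-path vertices also have degree $2$, each connected component of $H$ is either a path (with both endpoints in $Y_e$) or a cycle (containing, among $Y$-vertices, only $Y_i$-vertices). Thus each vertex of $Y_i$ lies either internal to a path-component of $H$ or on a cycle-component of $H$.

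Next I would argue that every $Y_i$-vertex internal to a path-component of $H$ survives all three cleaning steps as an internal vertex of the corresponding path of $H'$: step~1 only affects cycles; step~2 only removes non-$Y$ vertices; and step~3 only removes isolated vertices, and none of these operations delete such a vertex or alter its degree in~$H$.

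The main task is to bound the $Y_i$-vertices lost per cycle-component of $H$. For each cycle $C'$ of $H$, step~1 deletes one vertex of $C'$, costing at most $1$ element of $Y_i$. Afterwards $C'$ becomes a path whose two endpoints are the $G[R']$-neighbors of the deleted vertex; step~2 then peels non-$Y$ endpoints from each end until both endpoints are in $Y$, and these must be in $Y_i$ since every $Y$-vertex of $C'$ is. These two new path endpoints are no longer internal to a path of $H'$, contributing at most $2$ more lost $Y_i$-vertices. In the degenerate cases where the remnant of $C'$ collapses to a single vertex that is removed by step~3, or vanishes entirely during step~2, only fewer $Y_i$-vertices are lost, not more. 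Hence at most $3$ vertices of $Y_i$ are lost per cycle-component.

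Finally, I would note that the cycle-components of $H$ are pairwise vertex-disjoint, and pairwise non-adjacent in $G[R']$ because $H$ is an induced subgraph of $G[R']$; being chordless (as $H$ has maximum degree $2$), they are induced cycles of $G[R']$. Since $G[R']$ is $\mathcal{O}_{k-1}$-free, there are at most $k-2$ such components, so at most $3(k-2)$ vertices of $Y_i$ fail to become internal vertices of paths of $H'$. The only minor subtlety is careful bookkeeping in the degenerate cycle-components containing very few $Y$-vertices, where the $3$-per-cycle tally absorbs the overcount rather than describing distinct losses.
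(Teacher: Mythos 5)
Your proof is correct and follows essentially the same route as the paper: path-internal vertices of $Y_i$ are untouched by the cleaning, each cycle of $H$ costs at most $1+2=3$ vertices of $Y_i$ (one deleted, two becoming path endpoints), and the $\mathcal{O}_{k-1}$-freeness of $G[R']$ bounds the number of cycle components by $k-2$ (a fact the paper records just before stating the claim). The only differences are cosmetic bookkeeping in the degenerate cases, which you handle correctly.
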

  \begin{proof}
    If~$y \in Y_i$ belongs to a path of~$H$, then it must be an internal vertex of this path,
    and the path is unaffected by the cleaning operations.
    Thus it suffices to prove that in each cycle of~$H$,
    at most~3 vertices of~$Y_i$ are deleted or become endpoints of paths during the clean up.

    Let~$C'$ be a cycle of~$H$. If~$C'$ contains no more than~2 vertices of~$Y_i$, there is nothing to prove.
    Remark in this case that~$C'$ is entirely deleted by the clean up.
    Otherwise, let~$x$ be the vertex deleted from~$H$ (which may be in~$Y_i$),
    and let~$y_1,y_2$ be the first vertices of~$Y_i$ strictly before and after~$x$ in the cyclic order of~$C'$.
    Since~$C'$ has at least~3 vertices of~$Y_i$, $x,y_1,y_2$ are all distinct.
    Then, it is clear that the cleaning operations transform~$C'$ into a path with endpoints~$y_1,y_2$,
    such that any~$y \in Y_i \cap C'$ distinct from~$x,y_1,y_2$ is an internal vertex of this path.\cqed
  \end{proof}

  We now add~$H'$ to~$F$, which yields a forest by~\cref{claim:roots-F}.
  Recall that vertices of~$Y$ are adjacent to~$N \cup F$, and all endpoints of paths of~$H'$ are in~$Y$.
  Thus, in~$G[C \cup N \cup F \cup H']$, every vertex of~$H'$ has degree at least~$2$,
  and vertices of~$Y_i$ in the interior of paths of~$H'$ have degree at least~$3$.
  Since~$G[C \cup N \cup F]$ is connected by \cref{clm:CNF-connected},
  the addition of~$H'$ does not change the number of connected components.
  Using \cref{clm:internal-vertices-H}, this implies that
  \begin{equation}
    r(G[C \cup N \cup F \cup H']) \ge \card{Y_i} - 3(k-2).
  \end{equation}
  We finally apply~\cref{thm:richvertex} to~$G[C \cup N \cup F \cup H']$
  to obtain a vertex with degree at least
  \[ \delta_k \cdot (\card{Y_i} - 3(k-2)). \]
  Since~$G$ contains vertices of degree at least~$2$,
  we can always assume that $\eps_k \cdot r(G) \ge 2$, and thus
  \begin{equation}
    \card{Y_i} \ge \frac{1}{10} \cdot 2 \eps_k^{-1} \ge \frac{1}{5} \cdot 30(k-2) = 6(k-2).
  \end{equation}
  It follows that $\card{Y_i} - 3(k-2) \ge \card{Y_i}/2$, and the previous argument yields
  a vertex of degree at least $\frac{\delta_k}{2} \card{Y_i} \ge \frac{\delta_k}{20} r(G)$,
  which is an $\eps_k$-rich vertex.
\end{proof}

\section{Cutting trees into stars and paths}\label{sec:cutting}

Recall the statement of Theorem \ref{thm:richvertex}: we start with an $\mathcal{O}_k$-free graph $G$ of large girth, and divide its vertex set into some shortest cycle $C$, its neighborhood $N$, and the remainder of the vertex set $R$ (including the second neighborhood $S$ of $C$). Moreover we assume that $R$ induces a forest. Since $C$ is a shortest cycle, it is not difficult to check that every vertex of $S$ must have exactly one neighbor in $N$. Moreover, up to reducing the graph under consideration, we can assume that all the leaves of $G[R]$ lie in $S$.

Our goal in this section will be to simplify $G[R]$ by only keeping a linear number (in $|S|$) of subdivided stars or paths with endpoints in $S$. To this end it will be convenient to leave $C$ aside and only consider $N$ and $R$ for now (or more precisely what remains of $R$ after the graph has been reduced). Curious readers are invited to have a quick look at Figures \ref{fig:startree}, \ref{fig:starpath1} and \ref{fig:starpath2} to have an idea of how the results of this section will be used in the proof of Theorem \ref{thm:richvertex}.

\medskip

The paragraphs above motivate the following definitions. A forest $H$ is said to be \emph{$(S\subseteq F)$-decorated} if $V(H)=F$, every leaf of $H$ lies in $S\subseteq F$, and every connected component of $F$ contains at least 2 vertices.   A graph $H$ is said to be \emph{$(N,S\subseteq F)$-divided} if its vertex set is partitioned into two sets $F$ and $N$, such that 
\begin{itemize}
    \item $F$ induces a forest and $N$ is an independent set, 
    \item the neighborhood of $N$ in $F$ is a subset $S\subseteq V(F)$ containing all the leaves of $F$, 
    \item each vertex of $S$ has a unique neighbor in $N$, and
    \item every connected component of $H[F]$ contains at least two vertices.
\end{itemize}    

Note that the second and fourth conditions imply that $H[F]$ is $(S\subseteq F)$-decorated.
It can be deduced from the definition that if $H$ is $(N,S\subseteq F)$-divided, then it does not contain $K_{3,3}$ as a subgraph.

\medskip

A \emph{subdivided star} is a graph with at least two vertices, which is a subdivision of a star (a graph obtained from a star by replacing its edges by paths of arbitrary length). We insist on the fact that we do not consider singleton vertices as subdivided stars.
A path (on at least two vertices) is a special case of subdivided star. The \emph{center} of a subdivided star is the vertex of degree at least $3$, if any. 
If none, the subdivided star is a path, and its \emph{center} is a vertex of degree $2$ that belongs to $S$, if any, and an arbitrary vertex otherwise.  
We say that a forest $F'\subseteq F$ is \emph{$S'$-clean}, for some $S'\subseteq S$, if $V(F')\cap S' = L(F')$, where $L(F')$ denotes the set of leaves of~$F'$. 
We define being \emph{quasi-$S'$-clean} for a subdivided star as intersecting $S'$ at exactly its set of leaves, plus possibly its center.
Formally, a subdivided star $T$ is \emph{quasi-$S'$-clean} if $L(T) \subseteq V(T)\cap S' \subseteq L(T)\cup \{c\}$ where $c$ is the center of $T$. 
The \emph{degree} of a subdivided star is the degree of its center. A forest $F'\subseteq F$ of subdivided stars is \emph{quasi-$S'$-clean}, for some $S'\subseteq S$, if all its connected components are quasi-$S'$-clean (subdivided stars).

\begin{figure}[htb]
 \centering
 \includegraphics[scale=1]{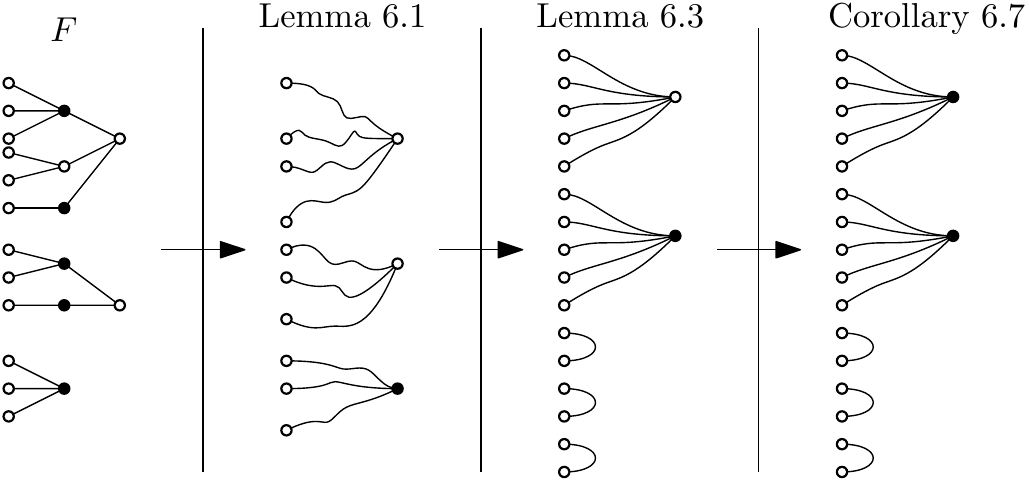}
 \caption{A visual summary of Section \ref{sec:cutting}.}
 \label{fig:cutting}
\end{figure}

\medskip

Our approach in this section is summarized in Figure \ref{fig:cutting}. We start with our forest $F$ and a subset $S$ of vertices including all the leaves of $F$ (the vertices of $S$ are depicted in white, while the vertices of $F-S$ are depicted in black). We first extract quasi-$S$-clean  subdivided stars  (Lemma \ref{lemma:stars}). We then extract quasi-$S$-clean subdivided stars of large degree, or $S$-clean paths (Lemma \ref{lem:throwawaysmallstars}). Finally we extract $S$-clean subdivided stars of large degree or paths (Corollary \ref{cor:dec_tree}). At each  step the number of vertices of $S$ involved in the induced subgraph of $F$ we consider is linear in $|S|$.

\begin{lemma} \label{lemma:stars}
 Let $H$ be an $(S\subseteq F)$-decorated forest. Then there is a subset $F^*\subseteq F$ containing at least $\tfrac1{2} |S|$ vertices of $S$ such that each connected component of $H[F^*]$ is a quasi-$S$-clean subdivided star.
\end{lemma}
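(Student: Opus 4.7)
The plan is to process each tree $T$ of the forest $H$ independently: for each such $T$, I will construct a subset $F^*_T \subseteq V(T)$ with $|F^*_T \cap S| \geq |V(T) \cap S|/2$ such that $T[F^*_T]$ is a disjoint union of quasi-$S$-clean subdivided stars, and then take $F^* := \bigcup_T F^*_T$. For each tree $T$, I will first form its topological skeleton $T'$ by suppressing all non-$S$ degree-$2$ vertices of $T$, so that $V(T')$ consists precisely of the leaves of $T$ together with the ``special'' vertices: the internal $S$-vertices (call this set $I$) and the branching vertices (set $B$). The special vertices form a subtree $U$ of $T'$; each leaf of $T$ is adjacent in $T'$ to a unique special vertex, so assigning each leaf to its special neighbor partitions $L(T)$ into sets $L(v)$ indexed by $v \in V(U)$. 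I set $w_v := [v \in S] + |L(v)|$, obtaining $\sum_{v \in V(U)} w_v = |V(T) \cap S|$.

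The heart of the argument will be to choose a subset $C \subseteq V(U)$ of special vertices that is independent in $T'$ and has large total weight. Once $C$ is chosen, I take $F^*_T$ to be $C \cup \bigcup_{v \in C} L(v)$ together with the internals of the segments of $T$ joining each $v \in C$ to each leaf in $L(v)$. Then $T[F^*_T]$ is a disjoint union of subdivided stars, one per $v \in C$, each centered at its $v$ with arms to the leaves in $L(v)$; each such star is automatically quasi-$S$-clean since its leaves lie in $S$, its non-center non-leaf vertices are non-$S$ segment internals, and its only possible internal $S$-vertex is the center $v$ itself.

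Since $U$ is a bipartite subtree, a proper $2$-coloring $V(U) = C_0 \sqcup C_1$ exists, and pigeonhole on $\sum_{C_0} w_v + \sum_{C_1} w_v = |V(T) \cap S|$ gives at least one color class of total weight at least $|V(T) \cap S|/2$, which I take as $C$. The main obstacle will be the handling of ``isolated'' special vertices $v \in C$ with $L(v) = \emptyset$, which would create singleton components not admissible as subdivided stars. I plan to address these either by reattaching them as extra leaves of adjacent stars via non-degenerate segments, or, when several consecutive special vertices are all joined by direct edges in $T$ (a ``degenerate chain''), by switching locally to a peeling argument that extracts maximal quasi-$S$-clean pieces from the chain's endpoints while dropping at most half of the involved $S$-vertices. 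Combining the bipartite $2$-coloring with this peeling refinement will yield the desired bound on each tree, and summing over the trees of $H$ yields the lemma.
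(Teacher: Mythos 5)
Your high-level architecture matches the paper's in its second half: both arguments end by passing to an auxiliary forest on candidate pieces and using bipartiteness to select an independent subfamily carrying at least half of $S$. The genuine gap is in the first half, where you produce the candidate pieces. You assign to each chosen special vertex $v\in C$ the star formed by $v$ and its attached leaf-segments and credit it with $w_v=[v\in S]+|L(v)|$ vertices of $S$; but this piece is a legal quasi-$S$-clean subdivided star only when $|L(v)|\ge 2$, or when $|L(v)|=1$ and $v\in S$. In the degenerate cases --- $L(v)=\emptyset$ (a singleton, which the paper explicitly excludes from being a subdivided star), and $|L(v)|=1$ with $v\notin S$ (a path one of whose two endpoints is the non-$S$ branching vertex $v$, hence not quasi-$S$-clean, since quasi-cleanness requires $L(T)\subseteq S$) --- the piece is invalid, yet your accounting still credits it with one vertex of $S$. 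These cases are not marginal: for a component that is a path $u_0v_1\cdots v_mu_{m+1}$ with every $v_i\in S$, each $v_i$ with $1<i<m$ is a leafless special vertex, so the $2$-colouring hands you a colour class whose weight sits almost entirely on singletons; neither of your proposed fixes is carried out (there is no adjacent chosen star to reattach to, and the ``peeling'' is only named, not performed, nor shown compatible with the $2$-colouring elsewhere in the tree while preserving the factor $\tfrac12$). You also silently assume $U\neq\emptyset$: a component that is a path whose only $S$-vertices are its two endpoints has no special vertices, so your identity $\sum_v w_v=|V(T)\cap S|$ fails there (that case is trivially fine, but it must be handled).

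For contrast, the paper avoids all of this with a greedy edge-deletion: repeatedly cut any edge for which each of the two resulting sides contains $0$ or at least $2$ vertices of $S$; once no such edge remains, every component is automatically either a quasi-$S$-clean subdivided star or an $S$-free singleton, with no degenerate pieces to repair, and only then does it contract components and take a heavy independent set in the resulting forest. To salvage your skeleton-based route you would need to replace the per-vertex star assignment by a grouping of consecutive degenerate special vertices into legal paths or stars before the $2$-colouring, and re-verify the factor $\tfrac12$; as written, the proof is incomplete at exactly its crux.
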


\begin{proof}
We first use the following claim. 
\begin{claim}\label{cl:deleteedges}
 There is a set of edges $X \subseteq E(H)$ such that every connected component of $H\setminus X$ is either a quasi-$S$-clean subdivided star or a single vertex that does not belong to $S$.
\end{claim}

\begin{proof}
We proceed greedily, starting with $X=\emptyset$. While $H\setminus X$ contains a component $T$ and an edge $e\in T$ such that each of the two components of $T-e$ contains either no vertex of $S$ or at least two vertices of $S$, we add $e$ to $X$.

Observe that in $H$, every connected component  contains at least $2$ vertices of $S$. Throughout the process of defining $X$, every connected component of $H\setminus X$  contains either $0$ or at least $2$ vertices of $S$.

At the end of the process, for any connected component $T$ of $H\setminus X$ with at least one edge, all the leaves of $T$ belong to $S$. Otherwise, the edge incident to the leaf of $T$ that is not in $S$ can be added to $X$.

Thus, $H\setminus X$ does not contain any component with more than one vertex of degree at least 3, since otherwise any edge on the path between these two vertices would have been added to $X$, yielding two components containing at least 2 leaves, and thus at least 2 vertices of $S$. 

Observe also that if $H\setminus X$ contains a component $T$ with a vertex $v\in S$ that has degree 2 in $T$, then $T$ is a path containing exactly 3 vertices of $S$, and thus $T$ is a subdivided star whose center and leaves are in $S$, and whose other internal vertices are not in $S$.\cqed
\end{proof}

To conclude, we need to select connected components of $H\setminus X$ with at least two vertices of $S$ and that are pairwise independent in $H$. Consider the minor $G_H$ of $H$ obtained by contracting each connected component of $H\setminus X$ into a single vertex and deleting those that are a single vertex not in $S$. Since $H$ is a forest, the graph $G_H$ is a forest. We weigh each vertex of $G_H$ by the number of elements of $S$ that the corresponding connected component of $H\setminus X$ contains. Since $G_H$ is a forest, there is an independent set $\{u_1,u_2,\ldots,u_p\}$ that contains at least half the total weight. The connected components corresponding to $u_1,u_2,\ldots,u_p$ together form a forest $H[F^*]$ with the required properties.
\end{proof}

We observe that subdivided stars of small degree can be transformed into paths for a low price, as follows. A~\emph{subdivided star forest}  is a forest whose components are subdivided stars (possibly paths). 

\begin{lemma}\label{lem:throwawaysmallstars}
 Let $H$ be an $(S\subseteq F)$-decorated forest. For every $S'\subseteq S$, every quasi-$S'$-clean subdivided star forest $F' \subseteq F$, and every integer \mbox{$D \geq 2$}, there is a subdivided star forest $F'' \subseteq F'$ such that every connected component of $H[F'']$ is either an $S'$-clean path or a quasi-$S'$-clean subdivided star of degree at least $D$.  Additionally, $F''$ contains at least $\frac{2|S'\cap F'|}{D}$ vertices of $S'$.
\end{lemma}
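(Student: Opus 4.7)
I would treat each connected component of $H[F']$ independently. By hypothesis each such component $T$ is a quasi-$S'$-clean subdivided star of some degree $d\ge 2$ with center $c$. The guiding dichotomy is simple: if $d\ge D$ I keep $T$ entirely in $F''$ (it already qualifies as a quasi-$S'$-clean subdivided star of degree at least $D$), and if $d<D$ I extract from $T$ a single $S'$-clean subpath, choosing it according to whether $c\in S'$.

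The extraction breaks into four sub-cases depending on whether $c\in S'$ and whether $T$ is a path. If $c\notin S'$ and $d\ge 3$, extract the path going from one leaf through $c$ to another leaf. If $c\notin S'$ and $T$ is already a path (so $d=2$), then $T$ itself is $S'$-clean and I keep it. If $c\in S'$ and $d\ge 3$, extract a single branch from $c$ to some leaf. If $c\in S'$ and $T$ is a path, extract the half-path from $c$ to one endpoint of $T$. The quasi-$S'$-clean assumption $S'\cap V(T)\subseteq L(T)\cup\{c\}$ is what guarantees that in each case the extracted subpath is indeed $S'$-clean: its two endpoints lie in $S'$, and its internal vertices avoid $S'$ because they are neither leaves of $T$ nor equal to $c$ (when $c\in S'$, the extraction always places $c$ at an endpoint of the subpath).

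For the count, a component with $d\ge D$ donates all $|V(T)\cap S'|$ of its $S'$-vertices to $F''$, while a component with $d<D$ donates exactly $2$. In the latter case $|V(T)\cap S'|\le d+1\le D$ (the $d$ leaves, plus possibly $c$), so each component contributes at least a $2/D$ fraction of its $S'$-vertices to $F''$. Summing across the pairwise-non-adjacent components of $F'$ gives $|S'\cap F''|\ge \tfrac{2|S'\cap F'|}{D}$, and by construction every component of $H[F'']$ is either an $S'$-clean path or a quasi-$S'$-clean subdivided star of degree at least $D$.

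I do not anticipate any substantive obstacle; the only point of care is the boundary case $D=2$, where no component ever has $d<D$ (every subdivided star has degree at least~$2$), so one takes $F''=F'$ and the bound holds with equality. Verifying in each sub-case that the extracted subgraph is $S'$-clean and that the $2/D$ ratio holds is the main content, and both follow directly from the quasi-$S'$-cleanness of $T$ and the inequality $d+1\le D$.
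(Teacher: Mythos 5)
Your proposal is correct and follows essentially the same route as the paper's proof: keep whole components of degree at least $D$, and from each low-degree component extract a single $S'$-clean path through (or ending at) the center depending on whether $c\in S'$, then count $2$ retained $S'$-vertices against at most $d+1\le D$ available ones per discarded component. Your leaf-to-leaf path through $c$ is exactly the paper's union $P_1\cup P_2$ of two internally-disjoint $c$-to-$S'$ paths, so there is no substantive difference.
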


\begin{proof}
We define $F''$ from $F'$ as follows.
Consider a connected component $T$ of $H[F']$. 
If the center of $T$ has degree at least $D$, we add $T$ to $F''$. Consider now the case where $T$ is a quasi-$S'$-clean subdivided star whose center $c$ has degree less than $D$. If $c \in S'$, we select a non-edgeless path $P \subseteq T$ between $c$ and $S'$, and add $P$ to $F''$. If $c \not\in S'$, we select two internally-disjoint paths $P_1, P_2 \subseteq T$ between $c$ and $S'$, and add $P_1\cup P_2$ to $F''$. Note that $P_1\cup P_2$ yields an $S'$-clean path. 

To see that $F''$ contains at least $\frac{2|S'\cap F'|}{D}$ vertices of $S'$, we simply observe that in the second case, out of a maximum of $(D-1)+1$ vertices of $S'$ in a component $T$, we keep at least $2$ in $F'$. This adds up to $\frac{2|S'|}{D}$ vertices of $S'$ since connected components of $H[F']$ are disjoint by definition.
\end{proof}

\begin{lemma}\label{lem:cleansmallstars}
 Let $H$ be an $\mathcal{O}_k$-free graph which is $(N,S\subseteq F)$-divided. If each vertex of $N$ has degree less than $\tfrac1{8k} |S|$, then one of the following holds.
  \begin{itemize}
      \item there is a subset $S'$ of $S$ and a subset $F_2$ of $F$ such that $F_2$ contains $\frac1{32}|S|$ vertices of $S'$, and each connected component of $H[F_2]$ is an $S'$-clean subdivided star. 
      \item there is a subset $F_3$ of $F$ such that every connected component of $F_3$ is a quasi-$S$-clean subdivided star of degree at most $4$ and $F_3$ contains at least $\tfrac18|S|$ vertices of $S$.
  \end{itemize}
\end{lemma}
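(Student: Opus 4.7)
My approach is to reduce to \cref{lemma:stars} via the $(N,S\subseteq F)$-division structure, and then dichotomize the resulting quasi-$S$-clean subdivided stars by degree.

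Since $H$ is $(N,S\subseteq F)$-divided, the induced subgraph $H[F]$ is $(S\subseteq F)$-decorated, so \cref{lemma:stars} applied to $H[F]$ yields $F^* \subseteq F$ with $|F^* \cap S| \ge \tfrac12|S|$ such that every connected component of $H[F^*]$ is a quasi-$S$-clean subdivided star. I then split the components of $H[F^*]$ by degree: let $F_{\mathrm{small}}$ be the union of vertex sets of components whose underlying star has degree at most $4$, and $F_{\mathrm{big}}$ the union of vertex sets of those of degree at least $5$; these sets partition $F^*$.

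If $|F_{\mathrm{small}} \cap S| \ge \tfrac18|S|$, then $F_3 := F_{\mathrm{small}}$ directly satisfies the second conclusion of the lemma. Otherwise $|F_{\mathrm{big}} \cap S| > \tfrac38|S|$, and I set $F_2 := F_{\mathrm{big}}$ and $S' := S \setminus Z$, where $Z$ collects the centers of components of $H[F_{\mathrm{big}}]$ that happen to lie in $S$. Because each such component is quasi-$S$-clean, its only possible non-leaf vertex in $S$ is its center; removing $Z$ from $S$ thus renders every component of $H[F_2]$ into an $S'$-clean subdivided star. Since each such component has at least $5$ leaves (all in $S$), sacrificing at most one center per component retains at least a $\tfrac56$ fraction, yielding $|F_2 \cap S'| \ge \tfrac56 \cdot \tfrac38 |S| = \tfrac{5}{16}|S| > \tfrac{1}{32}|S|$.

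\textbf{Main obstacle.} The only real subtlety is that the first conclusion demands a single $S' \subseteq S$ that works for all components of $H[F_2]$ simultaneously; this is resolved by globally excluding the (at most one per star) centers-in-$S$ of the selected components. It is worth flagging—and may signal that this lemma is deliberately stated with slack, or that the hypotheses are used in a companion lemma rather than here—that neither the $\mathcal{O}_k$-freeness of $H$ nor the degree bound $d_H(v) < \tfrac{1}{8k}|S|$ for $v \in N$ appears to be needed for this argument.
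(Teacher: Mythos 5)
Your argument is sound for the statement as literally written: splitting the quasi-$S$-clean stars produced by \cref{lemma:stars} by degree and, in the large-degree case, deleting from $S$ the at most one center per component that lies in $S$ does yield an $S'$-clean star forest, and the count $\tfrac56\cdot\tfrac38|S|=\tfrac{5}{16}|S|>\tfrac1{32}|S|$ is fine. It is also a genuinely shorter route than the paper's, which instead labels each vertex of $S$ by its unique neighbour in $N$, first prunes (using $\mathcal{O}_k$-freeness and the degree bound, in \cref{cl:labels}) so that no star has its center and a leaf sharing a label, and then randomly 2-colours the labels into ``center labels'' and ``leaf labels'' to decide what survives.

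The flag you raise at the end is, however, the heart of the matter, and it points at a real mismatch rather than mere slack. The paper's proof produces $S'$ in a specific form: it deletes all center labels from $N$, obtaining a subset $N'\subseteq N$ such that $S'$ is exactly the set of vertices of $S$ whose neighbour in $N$ survives in $N'$, and such that the centers of the retained stars have \emph{no neighbour at all} in $N'$. This is precisely how the lemma is consumed downstream (in \cref{cor:dec_tree} and then in \cref{sec:main}, where $S'$ is ``the subset of $S\cap R''$ with a neighbour in $N'$''): without it, a center $c$ lying in $S$ would retain its edge to some $v\in N'$, creating an edge between $L\setminus S$ and $R\setminus S$ that violates the hypotheses of \cref{lemma:startree_new}, and a center and a leaf sharing a neighbour in $N$ would create unwanted short cycles in the graphs $G_1,G_2$. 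Your $S'=S\setminus Z$ cannot be realized via such an $N'$, since the excluded centers keep their edges into $N$ and their neighbours there may coincide with neighbours of leaves in $S'$. So you have proved the lemma as stated — the statement is arguably under-specified relative to its use — but the hypotheses you identify as unused are exactly what the paper needs to build $N'$, and substituting your construction would break the proof of \cref{thm:richvertex}.
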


\begin{proof}
Let $F^*\subseteq F$ be the forest obtained from Lemma~\ref{lemma:stars}, applied to the $(S\subseteq F)$-decorated forest $H[F]$. Then $F^*$  contains at least $\tfrac12|S|$ vertices of~$S$, and each component of $H[F^*]$ is a quasi-$S$-clean subdivided star or an $S$-clean path. We define the \emph{label} of a vertex of $S$ to be its only neighbor in $N$. 

\begin{claim}\label{cl:labels}
  There is a subset $F_1$ of $F^*$ containing at least $\tfrac14|S|$ vertices of $S$, such that no subdivided star of $F_1$ has its center and one of its endpoints sharing the same label.
\end{claim}

\begin{proof}
Let $\ell$ be the maximum integer such that there exist $\ell$ subdivided stars 
 $S_1,S_2,\ldots,S_\ell$ in $H[F^*]$ and $\ell$ different labels $v_1,\dots , v_\ell\in N$, such that for any $1\le i \le \ell$, $S_i$ has its center and at least one of its endpoint labeled $v_i$. Note that in this case $G$ contains  $\ell$ independent cycles, and thus $\ell<k$ by assumption.
 
For any $1\le i \le \ell$, remove all the leaves $u$ of $F^*$ that are labeled $v_i$, and also remove the maximal path of $H[F^*]$ ending in $u$. By assumption, there are at most $\tfrac1{8k} |S|$ such vertices $u$ for each $1\le i \le \ell$, and thus we delete at most $k\cdot \tfrac1{8k}|S|\le \tfrac18|S|$ vertices of $S$ from $F^*$. We also delete the centers that have no leaves left (there are at most $k\cdot \tfrac1{8k}|S|\le \tfrac18|S|$ such deleted centers). Let $F_1$ be the resulting subset of $F^*$. Note that $F_1$ contains at least $|F^*\cap S|-2\cdot\tfrac18|S|\ge  (\tfrac12-\tfrac14) |S|=\tfrac14|S|$ vertices of $S$. \cqed
\end{proof}

We can assume that a subset $Y$ of at least $\tfrac18|S|$ vertices of $S$ in the forest $F_1$ obtained from Claim~\ref{cl:labels} are involved in a quasi-$S$-clean subdivided star of degree at least $5$. Indeed, otherwise at least $\tfrac18|S|$ vertices of $S$ in the forest $F_1$ obtained from Claim~\ref{cl:labels} are involved in a quasi-$S$-clean subdivided star of degree at most $4$ (note that an $S$-clean path is an $S$-clean subdivided star), and in this case the second outcome of \cref{lem:cleansmallstars} holds.

For each label $v\in N$, we choose uniformly at random with probability $\tfrac12$ whether $v$ is a \emph{center label} or a \emph{leaf label}. We then delete all the subdivided stars of $F_1$ whose center is labeled with a leaf label, and all the leaves whose label is a center label. Moreover, we delete from $N$ all the vertices that are a center label, and let $S'$ be the set of vertices of $S$ whose neighbor in $N$ is not deleted. 
 
Take a vertex $u$ of $Y$. If $u$ is a center of a subdivided star, then the probability that $u$ is not deleted is at least $\tfrac{1}{2}$. If $u$ is a leaf, $u$ is kept only if $u$ and the center of the subdivided star it belongs to (which has by construction a different label) are correctly labeled, so $u$ is kept with probability at least $\tfrac{1}{4}$. Overall, each vertex $u$ of $Y$ has probability at least $\tfrac{1}{4}$ to be kept. Thus the expectation of the fraction of vertices of $Y$ not deleted is at least $\tfrac{1}{4}$, thus we can find an assignment of the labels to leaf labels or center labels, such that a subset $Z \subseteq Y$ with $|Z|\geq \frac14 |Y|$ survives.

We then iteratively delete vertices of degree $1$ that do not belong to $S'$ and all vertices of degree $0$. Let $F_2$ be the resulting forest. Note that $S'$ contains only the endpoints of stars with a leaf label, thus the forest $F_2$ is $S'$-clean. It remains to argue that $F_2$ contains a significant fraction of vertices of $S$. Note that a connected component of $F_1$ is deleted if and only if it contains at most one element of $Z$. Every such component has at least $4$ elements in $Y \setminus Z$, hence there are at most $\frac14 \cdot \frac34 |Y|=\tfrac3{16}|Y|$ such components. It follows that $F_2$ contains at least $|Z|-\frac{3}{16} |Y|\ge \tfrac14|Y|-\tfrac3{16}|Y|\ge \tfrac{1}{16}|S|$ elements of $Z \subseteq S$. 
\end{proof}

We now have all the ingredients to obtain the following two corollaries.

\begin{corollary}
\label{cor:quasicleanforest}
 Let $H$ be an $(S\subseteq F)$-decorated forest. For any $D \geq 2$, there is a subset $F^*\subseteq F$ containing at least $\tfrac1{2D} |S|$ vertices of $S$ such that each 
  \begin{enumerate}
      \item $F^*$ induces a quasi-$S$-clean subdivided star forest whose components all have degree at least $D$, or
      \item $F^*$ induces an $S$-clean path forest.
  \end{enumerate}
\end{corollary}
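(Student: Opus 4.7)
The plan is to chain together Lemma \ref{lemma:stars} and Lemma \ref{lem:throwawaysmallstars}, then split the resulting forest based on which type of component (paths or large-degree stars) contains the majority of vertices of $S$.

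First, I would apply Lemma \ref{lemma:stars} to $H$ to obtain a subset $F_0 \subseteq F$ containing at least $\tfrac{1}{2}|S|$ vertices of $S$, such that every connected component of $H[F_0]$ is a quasi-$S$-clean subdivided star. In particular, $H[F_0]$ is a quasi-$S$-clean subdivided star forest.

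Next, I would apply Lemma \ref{lem:throwawaysmallstars} with $S' = S$, $F' = F_0$, and the given $D \geq 2$, to obtain a subdivided star forest $F_1 \subseteq F_0$ in which every connected component is either an $S$-clean path or a quasi-$S$-clean subdivided star of degree at least $D$. The lemma guarantees
\[
|S \cap F_1| \;\geq\; \frac{2\,|S \cap F_0|}{D} \;\geq\; \frac{|S|}{D}.
\]

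Finally, let $F_{\mathrm{path}}$ be the union of the components of $H[F_1]$ that are $S$-clean paths, and let $F_{\mathrm{star}}$ be the union of the components that are quasi-$S$-clean subdivided stars of degree at least $D$. Since these two sets partition $F_1$, at least one of them contains at least half of $|S \cap F_1|$, hence at least $\tfrac{|S|}{2D}$ vertices of $S$. Setting $F^* := F_{\mathrm{star}}$ gives outcome~(1), while setting $F^* := F_{\mathrm{path}}$ gives outcome~(2); in either case the desired conclusion holds.

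There is no real obstacle here: the two preceding lemmas are essentially designed to be composed, and the only observation required is that the components produced by Lemma \ref{lem:throwawaysmallstars} split cleanly into the two classes corresponding exactly to the two outcomes of the corollary, so a pigeonhole argument on $|S \cap F_1|$ suffices to conclude.
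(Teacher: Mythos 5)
Your proof is correct and is exactly the argument the paper intends: apply Lemma~\ref{lemma:stars}, then Lemma~\ref{lem:throwawaysmallstars} with $S'=S$, and use the pigeonhole principle on the two component types to keep at least $\tfrac{1}{2D}|S|$ vertices of $S$. The quantitative bookkeeping ($\tfrac12|S| \to \tfrac{1}{D}|S| \to \tfrac{1}{2D}|S|$) matches the paper's (very terse) justification.
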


Corollary~\ref{cor:quasicleanforest} follows from Lemma~\ref{lemma:stars} by applying Lemma~\ref{lem:throwawaysmallstars} and observing that one of the two outcomes contains half the corresponding vertices in $S$.

\begin{corollary}\label{cor:dec_tree}
Let $H$ be an $\mathcal{O}_k$-free graph which is $(N,S\subseteq F)$-divided, and let  $D\geq 2$.
If each vertex of $N$ has degree less than $\tfrac1{8k} |S|$, then there are $F''\subseteq F$, $S'\subseteq S$ such that $F''$ contains at least $\tfrac{1}{32D} |S|$ vertices of $S'$ and one of the following two cases apply.
  \begin{enumerate}
      \item $F''$ induces an $S'$-clean subdivided star forest whose components all have degree at least $D$, or
      \item $F''$ induces an $S'$-clean path forest.
  \end{enumerate}
\end{corollary}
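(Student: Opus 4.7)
The plan is to feed the output of Lemma~\ref{lem:cleansmallstars} into Lemma~\ref{lem:throwawaysmallstars}, and then split the resulting forest by component type. First, I apply Lemma~\ref{lem:cleansmallstars} to $H$, which yields one of its two cases.

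In the first case, we obtain $S' \subseteq S$ and $F_2 \subseteq F$ such that $F_2$ contains at least $\tfrac{1}{32}|S|$ vertices of $S'$ and $H[F_2]$ is an $S'$-clean subdivided star forest. Since $S'$-clean implies quasi-$S'$-clean, I apply Lemma~\ref{lem:throwawaysmallstars} to $F_2$ with parameter $D$, producing $F''_0 \subseteq F_2$ whose components are either $S'$-clean paths or quasi-$S'$-clean subdivided stars of degree at least $D$, and which contains at least $\tfrac{|S|}{16D}$ vertices of $S'$. I then split $F''_0$ into its path-components and its large-degree star-components; by pigeonhole, one of the two subforests contains at least $\tfrac{|S|}{32D}$ vertices of $S'$, yielding either outcome~2 or outcome~1 of the corollary respectively.

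In the second case, we obtain $F_3 \subseteq F$ containing at least $\tfrac{|S|}{8}$ vertices of $S$ and inducing a quasi-$S$-clean subdivided star forest with all components of degree at most~$4$. Setting $S' := S$, I apply Lemma~\ref{lem:throwawaysmallstars} to $F_3$ with parameter~$5$. Since every component of $F_3$ has center-degree at most~$4 < 5$, every component is carved into an $S'$-clean path, so the output $F''$ is an $S'$-clean path forest. It contains at least $\tfrac{2 \cdot |S|/8}{5} = \tfrac{|S|}{20} \geq \tfrac{|S|}{32D}$ vertices of $S'$, using $D \geq 2$, and this gives outcome~2 directly.

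The only subtle point is verifying in Case~1 that the subdivided-star components of $F''_0$ of degree at least $D$ are genuinely $S'$-clean rather than merely quasi-$S'$-clean as guaranteed by Lemma~\ref{lem:throwawaysmallstars}; but this is immediate, because such components are taken intact from $F_2$, which was already $S'$-clean. No significant obstacle is anticipated: all the hard structural work has been absorbed into Lemmas~\ref{lem:cleansmallstars} and~\ref{lem:throwawaysmallstars}, and the corollary is essentially a two-line combinatorial bookkeeping on top of them.
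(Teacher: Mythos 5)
Your proposal is correct and matches the paper's (one-sentence) proof: feed Lemma~\ref{lem:cleansmallstars} into Lemma~\ref{lem:throwawaysmallstars} and conclude by pigeonhole on the two component types, checking that the constants work out. You also correctly handle the one detail the paper glosses over, namely that the degree-$\ge D$ star components output by Lemma~\ref{lem:throwawaysmallstars} are genuinely $S'$-clean (their centers have degree at least~2 in the $S'$-clean forest $F_2$, hence are not leaves of $F_2$ and so not in $S'$).
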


Similarly, Corollary~\ref{cor:dec_tree} follows from Lemma~\ref{lem:cleansmallstars} by applying Lemma~\ref{lem:throwawaysmallstars} and observing that one of the two outcomes contains half the corresponding vertices in $S$.

\section{Trees, stars, and paths}\label{sec:tsp_new}

In the proof of Theorem \ref{thm:richvertex}, we will apply Corollaries \ref{cor:quasicleanforest} and \ref{cor:dec_tree} several times, and divide our graph into two parts: a union of subdivided stars on one side, and a union of subdivided stars or paths on the other side (see again Figures \ref{fig:startree}, \ref{fig:starpath1} and \ref{fig:starpath2} for an idea of what these two sides will correspond to in  the final applications). We now explain how to find a rich vertex in this context.

\medskip

We start with the case where subdivided stars appear on both sides.

\begin{lemma}[Star-star lemma]\label{lemma:starstar_new}Let $c>0$ be the constant of Lemma \ref{lem:min-degree}.
Let $H$ be an $\mathcal{O}_k$-free graph whose vertex set is the union of two sets $L,R$, such that 
\begin{itemize}
    \item $S=L\cap R$ is an independent set,
    \item there are no edges between $L\setminus S$ and $R\setminus S$, and
    \item $L$ (resp.\ $R$) induces in $H$ a disjoint union of subdivided stars, whose centers have average degree at least $3c k \log k$, and whose set of leaves is precisely $S$.
\end{itemize} 
Then $H$ contains a vertex of degree at least $\tfrac1{2f'(3,k)}|S|=\Omega(\tfrac1{k^3}|S|)$, where $f'$ is the function of Corollary \ref{corollary:banana}.
\end{lemma}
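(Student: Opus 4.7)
The plan is to argue by contradiction: I assume that every vertex of~$H$ has degree strictly less than $\frac{|S|}{2f'(3,k)}$ and then produce an induced copy of some graph in $\mathcal{O}_k$ inside~$H$. To begin, I associate to $H$ a bipartite multigraph~$B$ whose parts $L^*$ and $R^*$ are the centers of the subdivided stars in $L$ and in $R$ respectively, with one edge between the $L^*$-center of the $L$-star containing $s$ and the $R^*$-center of the $R$-star containing $s$, for each $s\in S$. Thus $|E(B)|=|S|$, and $H$ is literally a strict subdivision of~$B$: the edge created by $s$ is replaced by the $L$-branch from its $L$-center to~$s$ concatenated with the $R$-branch from $s$ to its $R$-center, and $s$ itself already subdivides this edge at least once. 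For every center~$v$, $d_B(v)$ equals the number of leaves of $v$'s star, which equals $v$'s degree in its star and (by the structural assumptions on $H$) equals $d_H(v)$. Since the centers on each side have average degree at least $3ck\log k$ and these degrees sum to~$|S|$, I obtain $|L^*|, |R^*|\leq |S|/(3ck\log k)$, hence $|V(B)|\leq 2|S|/(3ck\log k)$. A quick check shows that $H$ contains no $K_{3,3}$ subgraph: every $s\in S$ has degree~$2$ in~$H$ (one edge in its $L$-star, one in its $R$-star), and every subdivision vertex has degree~$2$, so the six vertices of any $K_{3,3}$ would have to be non-path star centers; but no two such centers are adjacent, as the stars in $L$ (resp.\ in $R$) are vertex-disjoint on their side and no edge joins $L\setminus S$ to $R\setminus S$.

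With the no-$K_{3,3}$ property in hand, Corollary~\ref{corollary:banana} with $t=3$ supplies a set $X\subseteq V(H)$ of size at most $f'(3,k)$ meeting every banana of~$H$. I form~$B'$ from~$B$ by deleting the vertices of $X\cap V(B)$ together with every edge whose associated subdivision path in~$H$ meets~$X$. Deleting a center $v\in X$ kills exactly $d_B(v)=d_H(v)$ edges of~$B$; deleting any non-center vertex of~$X$ kills at most one edge. Under the contradiction hypothesis, the total number of lost edges is strictly less than $f'(3,k)\cdot\tfrac{|S|}{2f'(3,k)}=\tfrac{|S|}{2}$, so $|E(B')|>|S|/2$. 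Moreover $B'$ is simple: any multi-edge would correspond to two internally disjoint paths in~$H$ between the same pair of centers, with all internal vertices of degree~$2$ in~$H$ and avoiding~$X$, i.e., a banana of~$H$ disjoint from~$X$, contradicting the choice of~$X$. Consequently $B'$ has average degree at least $2|E(B')|/|V(B')|>(3/2)\,ck\log k\geq ck\log k$. By Lemma~\ref{lem:min-degree}, any strict subdivision of~$B'$ contains an induced copy of some graph in $\mathcal{O}_k$. The subgraph of~$H$ induced on the surviving centers, the surviving elements of~$S$, and the corresponding intact subdivision vertices is such a strict subdivision of~$B'$, because the structural hypotheses on~$H$ (vertex-disjointness of the stars on each side, independence of~$S$, and absence of edges between $L\setminus S$ and $R\setminus S$) forbid any extra edges between these vertices. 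This produces an induced $\mathcal{O}_k$ in~$H$, contradicting $\mathcal{O}_k$-freeness.

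The main subtle point I foresee is the bookkeeping in the transition from $B$ to~$B'$: one must handle separately the star-centers in~$X$ (which may destroy many edges of~$B$, controlled through the identity $d_B(v)=d_H(v)$ together with the contradiction hypothesis) and the elements of~$S$ or subdivision vertices in~$X$ (which destroy at most one edge each). Closely related is the need to verify that the surviving part of~$H$ is at the same time an induced subgraph of~$H$ and a genuine strict subdivision of~$B'$; this crucially relies on the independence of~$S$, the pairwise vertex-disjointness of the $L$-stars and of the $R$-stars, and the non-adjacency of $L\setminus S$ and $R\setminus S$, without which extra chords or cross-edges could break the strict-subdivision identification and prevent the application of Lemma~\ref{lem:min-degree}.
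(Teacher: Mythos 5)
Your proposal is correct and follows essentially the same route as the paper: use \cref{corollary:banana} with $t=3$ to hit all bananas, so that the bipartite graph on the star centers (of which $H$ is a strict subdivision) becomes simple after discarding the part touched by $X$, and then either a vertex of $X$ is rich or \cref{lem:min-degree} yields an induced $\mathcal{O}_k$. The only (cosmetic) difference is that you run a global edge-counting contradiction where the paper does an explicit case split on whether at least half of $S$ lies in stars centered in $X$.
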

\begin{proof}
Note that $H$ does not contain $K_{3,3}$ as a subgraph (but might contain $K_{2,2}$ as a subgraph) and is $\mathcal{O}_k$-free. By~\cref{corollary:banana}, there is a set $X$ of at most $f'(3,k)$ vertices of $H$ such that all bananas of $H$ intersect $X$. Since the centers of the subdivided stars are the only vertices of degree larger than~2 in $H$, we can assume that $X$ is a subset of the centers of the subdivided stars. 

Assume first that less than $\tfrac12|S|$ vertices of $S$ are leaves of subdivided stars centered in an element of $X$.
 Let $S'\subseteq S$ be the leaves of the subdivided stars whose center is not in $X$ (note that $|S'|\ge \tfrac12 |S|)$, and remove from the subdivided stars of $H[L]$ and $H[R]$ all branches whose endpoint is not in $S'$ to get new sets of vertices $L', R'$. The centers of the resulting $S'$-clean subdivided stars  now have average degree at least $\tfrac12\cdot 3c k \log k> ck\log k$. We denote the resulting $S'$-clean subdivided stars of $H[L']$ by $S_1, S_2$, etc. and their centers by $s_1, s_2$, etc. Similarly, we denote the resulting $S'$-clean subdivided stars of $H[R']$ by $S_1',S_2'$, etc. and their centers by $s_1',s_2'$, etc. 
Observe that by the definition of $X$, for any two centers $s_i,s_j'$, there is at most one vertex $u\in S'$ which is a common leaf of $S_i$ and $S_j'$.  

Let $B$ be the bipartite graph with partite set $s_1, s_2, \ldots$ and $s_1', s_2', \ldots$, with an edge between $s_i$ and $s_j'$ if and only if some vertex of $S'$ is a common leaf of $S_i$ and $S_j'$. Note that $B$ has average degree more than $c k \log k$, and some induced subgraph of $H$ (which is $\mathcal{O}_k$-free) contains a strict subdivision of $B$. 
This contradicts \cref{lem:min-degree}.

So we can assume that at least $\tfrac12|S|$ vertices of $S$ are leaves of subdivided stars centered in an element of $X$. Then some vertex of $X$ has degree at least $\tfrac1{2f'(3,k)}|S|$, as desired.
%
%
%
\end{proof}

We now consider the case where subdivided stars appear on one side, and paths on the other.

\begin{lemma}[Star-path lemma]\label{lemma:starpath_new}
Let $c>0$ be the constant of~\cref{lem:min-degree}. Let $H$ be an $\mathcal{O}_k$-free  graph whose vertex set is the union of two sets $L,R$, such that 
\begin{itemize}
    \item $S=L\cap R$ is an independent set,
    \item there are no edges between $L\setminus S$ and $R\setminus S$,
    \item $L$  induces in $H$ a disjoint union of paths, whose set of endpoints is precisely $S$, and
    \item $R$ induces in $H$ a disjoint union of subdivided stars, whose centers have average degree at least $4c k \log k$, and whose set of leaves is precisely $S$.
\end{itemize} 
Then $H$ contains a vertex of degree at least $\tfrac1{3f'(2,k)} |S|=\Omega(\tfrac1{k^2}|S|)$, where $f'$ is the function of~\cref{corollary:banana}.
\end{lemma}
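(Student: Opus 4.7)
The plan is to adapt the proof of the Star-star Lemma (\cref{lemma:starstar_new}) but to apply it to a judiciously chosen quotient $H'$ of $H$, so as to secure the $K_{2,2}$-freeness needed to invoke \cref{corollary:banana} with $t=2$ (instead of $t=3$) and thereby obtain the advertised $\Omega(|S|/k^2)$ bound. I would take $V(H')$ to be the centers of the $R$-stars together with $S$, with edges of two kinds: a \emph{leg edge} from each center to each of its leaves (one per leg), and a \emph{matching edge} between the two $S$-endpoints of each $L$-path. Equivalently, $H'$ is obtained from $H$ by contracting every star leg and every $L$-path down to a single edge, eliminating all their internal vertices.

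Two properties of $H'$ then need to be verified. First, $H'$ is still $\mathcal{O}_k$-free, since any collection of pairwise independent cycles in $H'$ lifts, by re-expanding the contracted subpaths, to an equally independent collection of cycles in $H$. Second, $H'$ contains no $K_{2,2}$ subgraph: a short case analysis on the types of vertices of a putative $4$-cycle in $H'$ rules it out, since $H'$ has no center-center edges, the matching edges form a matching (so no $S$-vertex has two matching partners), and each $S$-vertex lies in exactly one star (so no $S$-vertex is a leaf of two distinct centers). Applying \cref{corollary:banana} to $H'$ with $t=2$ then produces a set $X\subseteq V(H')$ with $|X|\leq f'(2,k)$ that intersects every banana of $H'$; since every non-center vertex of $H'$ has degree exactly two, we may replace each $S$-vertex of $X$ by its (unique) center and henceforth assume $X$ consists of centers only.

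Now run the same case split as in \cref{lemma:starstar_new}. If at least $\tfrac{1}{3}|S|$ vertices of $S$ are leaves of stars centered in $X$, then some center of $X$ has at least $|S|/(3f'(2,k))$ leaves in $H$, giving the desired rich vertex. Otherwise, at least $\tfrac{2}{3}|S|$ vertices of $S$ are leaves of stars outside $X$, and a pigeonhole argument shows that at least $|S|/6$ of the $L$-paths have both endpoints at such leaves. I would form a multigraph $B$ on the non-$X$ centers whose edges are these ``good'' $L$-paths. A multi-edge of $B$ between two non-$X$ centers would yield a banana of $H'$ whose interior consists entirely of $S$-vertices and hence is not hit by the center-only set $X$, contradicting its defining property; likewise, a loop of $B$ at a non-$X$ center would produce a triangle in $H'$ which is again a banana with only $S$-vertices in its interior. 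Therefore $B$ is simple.

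A routine calculation using the hypothesis that the $R$-star centers have average degree at least $4ck\log k$ gives $|V(B)|\leq |S|/(4ck\log k)$, whence the average degree of $B$ exceeds $ck\log k$. By \cref{lem:min-degree}, any strict subdivision of $B$ contains a graph of $\mathcal{O}_k$ as an induced subgraph, and one such strict subdivision sits inside $H$ by realising each edge of $B$ as the unique corresponding $L$-path together with the two star legs hanging off its endpoints. As the selected $L$-paths and legs are pairwise vertex-disjoint and no further $H$-edges connect their vertices, this strict subdivision is in fact an \emph{induced} subgraph of $H$, contradicting the $\mathcal{O}_k$-freeness of $H$. The only genuinely new step compared with \cref{lemma:starstar_new} is the $K_{2,2}$-freeness check for $H'$; that is precisely what enables the use of \cref{corollary:banana} with $t=2$ rather than $t=3$, and what is responsible for the $\Omega(|S|/k^2)$ bound in the statement.
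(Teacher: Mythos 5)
Your proposal is correct and follows essentially the same route as the paper's proof: a banana-hitting set of size $f'(2,k)$ consisting only of star centers, the same case split on whether a $\Theta(1)$-fraction of $S$ lies in stars centered in $X$, and in the remaining case an auxiliary simple graph on the non-$X$ centers of average degree more than $ck\log k$ whose strict subdivision sits induced in $H$, contradicting \cref{lem:min-degree}. The only deviation is your detour through the contracted graph $H'$ to certify $K_{2,2}$-freeness before invoking \cref{corollary:banana} with $t=2$; the paper simply asserts that $H$ itself has no $K_{2,2}$ subgraph (which the stated hypotheses do not quite guarantee, e.g.\ a length-$2$ $L$-path whose endpoints are leaves of two length-$1$ legs of one star yields a $C_4$), so your extra step is a harmless and arguably more careful way to reach the same bound.
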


\begin{proof}
Note that $H$ does not contain $K_{2,2}$ as a subgraph,  and is $\mathcal{O}_k$-free. By~\cref{corollary:banana}, there is a set $X$ of at most $f'(2,k)$ vertices of $H$ such that all bananas of $H$ intersect $X$. Since the centers of the subdivided stars are the only vertices of degree more than 2 in $H$, we can assume that $X$ is a subset of the centers of the subdivided stars. 

Assume first that less than $\tfrac13|S|$ vertices of $S$ are leaves of subdivided stars centered in an element of $X$.
Then there are at least $\tfrac16|S|$ paths in $H[L]$ whose endpoints are not leaves of stars centered in $X$. Let $S'\subseteq S$ be the endpoints of these paths (note that $|S'|\ge \tfrac13 |S|$), and remove from the subdivided stars of $H[R]$ all branches whose endpoint is not in $S'$ to get $R'$. The centers of the resulting $S'$-clean subdivided stars in $H[R']$ now have average degree at least $\tfrac13\cdot 4c k \log k> ck\log k$. We denote these subdivided stars by $S_1,\ldots,S_t$, and their centers by $s_1,\ldots,s_t$.

Given two centers $s_i,s_j$, we say that a pair $u_i,u_j\in S'$ is an \emph{$\{i,j\}$-route} if $u_i$ is a leaf of $S_i$, $u_j$ is a leaf of $S_j$, and there is a path with endpoints $u_i,u_j$ in $H[L]$. Observe that by the definition of $X$, for every pair $s_i,s_j$, there is at most one $\{i,j\}$-route. 

Let $G$ be the graph with vertex set $s_1,\ldots,s_t$, with an edge between $s_i$ and $s_j$ if and only if there is an $\{i,j\}$-route. Note that $G$ has average degree more than $c k \log k$, and some induced subgraph of $H$ (which is $\mathcal{O}_k$-free) contains a strict subdivision of $G$. This contradicts~\cref{lem:min-degree}. 

So we can assume that at least $\tfrac13|S|$ vertices of $S$ are leaves of subdivided stars centered in an element of $X$. Then some vertex of $X$ has degree at least $\tfrac1{3f'(2,k)}|S|$, as desired.
\end{proof}

From the two previous lemmas and~\cref{lemma:stars} we deduce the following.

\begin{lemma}[Star-tree lemma]\label{lemma:startree_new}
There is a constant $c>0$ such that the following holds. 
Let $H$ be an $\mathcal{O}_k$-free graph which does not contain $K_{t,t}$ as a subgraph. Assume that the vertex set of $H$ is the union of two sets $L,R$, such that 
\begin{itemize}
    \item $S=L\cap R$ is an independent set partitioned into $S_P, S_T$,
    \item there are no edges between $L \setminus S$ and $R\setminus S$,
    \item $L$ induces in $H$ a disjoint union of subdivided stars, whose centers have average degree at least $(8ck \log k)^2$, and whose set of leaves is equal to $S$, and
    \item $R$ induces in $H$ the disjoint union of 
    \begin{itemize}
    \item paths on a vertex set $R_P$, whose set of endpoints is equal to $S_P$, and 
    \item a tree $T$ on a vertex set $R_T$ such that $S_T$ is a subset of leaves of $T$.
    \end{itemize}
\end{itemize} 
Then $H$ contains a vertex of degree at least $\Omega(\tfrac1{k^4\log k} |S|)$. 
\end{lemma}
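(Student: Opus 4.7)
The plan is to reduce to Lemmas~\ref{lemma:starstar_new} and~\ref{lemma:starpath_new} by further decomposing the tree~$T$ on the $R$-side into subdivided stars or paths via \cref{cor:quasicleanforest}. I would distinguish two cases according to whether $|S_P|\ge|S|/2$ or $|S_T|\ge|S|/2$.

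In the first case, I would discard the tree~$T$ and restrict to the induced subgraph on the $L$-star branches ending in $S_P$ together with the paths on $R_P$. Trimming keeps the average degree of the $L$-star centers at least $(8ck\log k)^2/2\ge 4ck\log k$, so Lemma~\ref{lemma:starpath_new} applies and yields a vertex of degree $\Omega(|S_P|/k^2)=\Omega(|S|/k^2)$.

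In the second case, I would first iteratively remove non-$S_T$ leaves from $T$ to obtain a subtree $T'$ that is $(S_T\subseteq V(T'))$-decorated, then apply \cref{cor:quasicleanforest} with $D\eqdef 3ck\log k$. This yields $F^*\subseteq V(T')$ with $|F^*\cap S_T|\ge |S_T|/(2D)=\Omega(|S|/(k\log k))$ such that $H[F^*]$ is either (a)~a quasi-$S_T$-clean subdivided star forest whose components have degree at least~$D$, or (b)~an $S_T$-clean path forest. In subcase~(b), I would set $S'\eqdef F^*\cap S_T$ and trim the $L$-stars to keep only branches ending in $S'$; the average degree of the $L$-centers then stays at least $\Delta/(4D)=(16/3)ck\log k\ge 4ck\log k$ (where $\Delta=(8ck\log k)^2$), and Lemma~\ref{lemma:starpath_new} gives a vertex of degree $\Omega(|S'|/k^2)=\Omega(|S|/(k^3\log k))$. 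In subcase~(a), I would remove from $S'$ the at most $|S'|/D$ vertices which are centers of the $R$-stars to obtain $S''$ with $|S''|\ge(1-1/D)|S'|=\Omega(|S|/(k\log k))$, then trim the $L$-stars to $S''$-branches; both sides are now subdivided star forests whose centers have average degree at least $3ck\log k$, so Lemma~\ref{lemma:starstar_new} yields a vertex of degree $\Omega(|S''|/k^3)=\Omega(|S|/(k^4\log k))$. Taking the worst of the three bounds gives the claimed $\Omega(|S|/(k^4\log k))$.

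The main subtlety I anticipate is in subcase~(a): a quasi-$S_T$-clean $R$-star may have its center~$c$ in $S_T\subseteq L\cap R$, which both violates the ``leaves equal the shared set'' requirement of the star-star lemma and may create unwanted edges between $L\setminus S''$ and $R\setminus S''$ arising from length-$1$ $L$-branches of the form $v$-$c$. Besides excluding such centers from~$S''$, the argument must additionally discard the corresponding $L$-centers~$v$, but this is essentially free since any~$v$ adjacent to many such centers~$c$ is already rich in~$H$. The hypothesis $\Delta=(8ck\log k)^2$ is calibrated precisely so that, after all these losses, the remaining structure still meets the average-degree conditions of the invoked lemmas.
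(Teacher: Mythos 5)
Your proposal is correct and follows essentially the same route as the paper: trim $T$ to an $S_T$-clean subtree, decompose via \cref{cor:quasicleanforest} into clean paths or high-degree stars, and conclude with \cref{lemma:starpath_new} or \cref{lemma:starstar_new} (the paper just applies \cref{cor:quasicleanforest} once to the combined forest $T'\cup R_P$ instead of splitting on $|S_P|$ versus $|S_T|$). Note that your worry in subcase~(a) is vacuous: every vertex of $S_T$ is a leaf of $T'$, hence has degree~$1$ there and cannot be the center of an extracted subdivided star, which is exactly the paper's observation that any quasi-$S$-clean subforest of an $S$-clean forest is $S$-clean.
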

\begin{proof} 
Let $c>0$ be the constant of~\cref{lem:min-degree}. Assume first that $|S_T|\le 1$. Then since the subdivided stars of $L$ have average degree at least $(8ck \log k)^2$, we have $|S_P|=|S|-|S_T|\ge (8ck \log k)^2-1\ge 1$ and thus $|S_P|\ge \tfrac12|S|$. By removing the branch of a subdivided star of $L$ that has an endpoint in $S_T$ (if any), we obtain a set of $S_P$-clean subdivided stars of average degree at least $\tfrac12\cdot (8ck \log k)^2\ge 4ck \log k$. By~\cref{lemma:starpath_new}, we get a vertex of degree at least $\Omega(\tfrac1{k^2}|S_P|)=\Omega(\tfrac1{k^2}(|S|))$, as desired. So in the remainder we can assume that $|S_T|\ge 2$.

Let $T'$ be the subtree of $T$ obtained by repeatedly removing leaves that are not in $S_T$. Since $|S_T|\ge 2$,  $L(T')=S_T$.
Observe that $F'=T'\cup R_P$ is an $S$-clean forest (with $L(F')=S$), thus any $S$-quasi-clean subforest of $F'$ is $S$-clean.
It follows from \cref{cor:quasicleanforest} (applied to $S$, $F'$, and $D=4ck \log k$) that $F'$ contains a subset $F^*$ containing at least $\tfrac1{2\cdot 4ck \log k}|S|$ vertices of~$S$, such that $H[F^*]$ induces either (1) an $S$-clean forest of path, or (2) an $S$-clean forest of subdivided stars of degree at least $4ck\log k$.

We denote this intersection of $S$ and $F^*$ by $S^*$, and we remove in the subdivided stars of $H[L]$ all branches whose endpoint is not in $S^*$ to get a new set of vertices $L^*\subset L$. By assumption, the  average degree of the subdivided stars in $L^*$ is at least $\tfrac{(8ck\log k)^2}{8ck\log k} = 8ck\log k \ge 4ck\log k$. 

In case (1) above we can now apply \cref{lemma:starpath_new}, and in case (2) we can apply \cref{lemma:starstar_new}. In both cases we obtain a vertex of degree at least $\Omega(\tfrac1{k^3}|S^*|)=\Omega(\tfrac1{k^4\log k}|S|)$, as desired.
\end{proof}

\section{Proof of Theorem \ref{thm:richvertex}}\label{sec:main}
We start with recalling the setting of \cref{thm:richvertex}. The graph $G$ is a connected $\mathcal{O}_k$-free graph of girth at least 11, and $C$ is a shortest cycle in $G$. The neighborhood of $C$ is denoted by $N$, and the vertex set $V(G)\setminus(C\cup N)$ is denoted by $R$. The subset of $R$ consisting of the vertices adjacent to $N$ is denoted by $S$. Since $C$ is a shortest cycle, of size at least 11, each vertex of $S$ has a unique neighbor in $N$, and a unique vertex at distance 2 in $C$. Moreover $N$ and $S$ are independent sets. In the setting of \cref{thm:richvertex}, $R$ is a forest.

\smallskip

Our goal is to prove that there is a vertex whose degree is linear in the cycle rank $r(G)$.
To this end, we assume that $G$ has maximum degree at most $\delta \cdot r(G)$,
for some $\delta > 0$, and prove that this yields a contradiction if $\delta$ is a small enough function of $k$.

\smallskip

By \cref{lem:cycle-rank}, we can assume that $G$ is reduced, i.e., contains no vertex of degree $0$ or $1$.
If $G$ consists only of the cycle $C$, then $r(G) = 1$ and the theorem is immediate.
Thus we can assume that $N$ is non-empty, which in turn implies that $S$ is non-empty since $G$ is reduced. Since $R$ does not contain any vertex of degree 0 or 1 in $G$, we also have that  $G[R]$ does not contain any isolated vertex (all its components have size at least 2) and all the leaves of  $G[R]$ lie in $S$. Using the terminology introduced in Section \ref{sec:cutting}, $G[R]$ is an $(S\subseteq R)$-decorated forest, and $G\setminus V(C)$ is $(N,S\subseteq R)$-divided.

\medskip

Using that $G$ is connected, remark that
\begin{equation}
  \label{eq:cycle-rank-connected}
  r(G) = \card{E(G)} - \card{V(G)} + 1 = 1 + \frac{1}{2} \sum_{v \in V(G)} (d(v) - 2).
\end{equation}
We start with proving that the cardinality of $S$ is at least the cycle rank $r(G)$.

\begin{claim}\label{claim:Slinear}
$|S|\ge r(G)$, and thus $G$ has maximum degree at most $\delta |S|$.
\end{claim}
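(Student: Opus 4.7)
The plan is to exploit the cycle-rank identity \eqref{eq:cycle-rank-connected} and carefully tally the degree excesses $d(v)-2$ across the four parts of the decomposition $V(G) = C \cup N \cup S \cup (R\setminus S)$, using how the minimality of $C$ and the girth hypothesis rigidify the incidences between these parts.

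First I would pin down the local contributions. For $v \in C$: minimality of $C$ forbids chords, and $v$ has no neighbour in $R$ by definition, so $d(v) - 2 = |N(v) \cap N|$. For $v \in N$: girth at least $11$ together with $|C| \ge 11$ forbids $v$ from having two neighbours in $C$ (a second one would close a cycle of length at most $7$), hence $v$ has exactly one neighbour in $C$, and since $N$ is independent this yields $d(v) - 2 = |N(v) \cap S| - 1$. For $v \in S$: the unique $N$-neighbour and the absence of $C$-neighbours give $d(v) - 2 = d_R(v) - 1$, where $d_R$ denotes the degree in $G[R]$. For $v \in R \setminus S$: we simply have $d(v) - 2 = d_R(v) - 2$.

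Summing, the $C$- and $N$-contributions telescope: using that each vertex of $N$ has a unique neighbour in $C$, $\sum_{v \in C}(d(v)-2) = |E(C,N)| = |N|$, while $\sum_{v \in N}(d(v)-2) = |E(N,S)| - |N| = |S| - |N|$ since each vertex of $S$ has a unique neighbour in $N$. Combining with the $R$-contributions and applying \eqref{eq:cycle-rank-connected} reduces the identity to
$$2(r(G)-1) = \sum_{v \in R} d_R(v) - 2|R \setminus S| = 2|E(G[R])| - 2(|R| - |S|).$$
Since $G[R]$ is a forest with $c \ge 1$ connected components (there is at least one component because $S \neq \emptyset$ forces $R \neq \emptyset$), we have $|E(G[R])| = |R| - c$, and the equation simplifies to $|S| = r(G) + c - 1 \ge r(G)$. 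The second half of the claim is then immediate from the standing assumption $\Delta(G) \le \delta \cdot r(G)$.

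There is no real obstacle here beyond careful bookkeeping: the rigidity of the shortest-cycle decomposition forces essentially all the degree excess outside $R$ to be absorbed into $|S|$, while the forest structure of $G[R]$ converts the remaining $S$-side excess into precisely the cycle rank (up to the nonnegative component correction $c-1$).
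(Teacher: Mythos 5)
Your proof is correct and follows essentially the same route as the paper: both apply the identity \eqref{eq:cycle-rank-connected} and tally the degree excesses $d(v)-2$ over $C\cup N$ and over $R$, using the forest structure of $G[R]$ to get $|S| = r(G) + c - 1 \ge r(G)$. (The only quibble is the parenthetical "cycle of length at most $7$" for a vertex of $N$ with two $C$-neighbours, which should read "a cycle shorter than $C$"; the conclusion is unaffected.)
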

\begin{proof}
  Observe that $\tfrac12\sum_{v\in C \cup N}(d(v)-2)=\frac{1}{2}|S|$.
  Furthermore $\tfrac12\sum_{v\in R}(d(v)-2)$ is equal to $\tfrac12|S|$ minus the number of connected components of $G[R]$, as $R$ induces a forest and each vertex of $S$ has a unique neighbor outside of $R$.
  Since $R$ is non-empty, it follows from~\eqref{eq:cycle-rank-connected} that $r(G)\le |S|$.
  We assumed that $G$ has maximum degree at most $\delta \cdot r(G)$ which is at most $\delta |S|$, as desired.\cqed
\end{proof}






In the remainder of the proof, we let $c>0$ be a sufficiently large constant such that~\cref{lem:cycle-handles,lemma:startree_new} both hold for this constant.

\medskip

We consider $\delta< \frac1{8k}$, and use Claim~\ref{claim:Slinear} to apply Corollary \ref{cor:dec_tree} to the subgraph $H$ of $G$ induced by $N$ and $F=R$ (which is $\mathcal{O}_k$-free), with $D=2\cdot (8ck\log k)^2$. 
We obtain subsets $N'\subseteq N$, $R''\subseteq R$ such that if we define $S'$ as the subset of $S\cap R''$ with a neighbor in $N'$,  we have $| S'|\ge \tfrac1{32D} |S|$ and at least one of the following two cases apply.
  
  \begin{enumerate}
      \item  Each connected component of $H[R'']$ is an $S'$-clean subdivided star of degree at least $D$, or
      \item Each connected component of $H[R'']$ is an $S'$-clean path.
  \end{enumerate}
 
 We first argue that the second scenario holds. 
  \begin{claim}\label{cl:case2}
   Each connected component of $H[R'']$ is an $S'$-clean path.
  \end{claim}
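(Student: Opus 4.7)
Our plan is to rule out case~1 of the dichotomy by contradiction, showing that it would force the existence of a vertex of $G$ with degree exceeding $\delta \cdot r(G) \leq \delta|S|$, violating the standing assumption. Suppose every component of $H[R'']$ is an $S'$-clean subdivided star of degree at least $D = 2(8ck\log k)^2$. Let $\tilde S = V(R'')\cap S'$; by \cref{cor:dec_tree} together with the $S'$-cleanness of $R''$, $\tilde S$ is exactly the set of leaves of the stars in $H[R'']$ and $|\tilde S| \geq |S|/(32D)$. For $s \in \tilde S$, write $f(s) \in N'$ for its unique neighbour in $N$ and $g(s) \in C$ for the unique neighbour of $f(s)$ in $C$.

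The crucial step is the choice of a vertex $v_0 \in C$ to remove. Since $\sum_{v_0 \in C} |g^{-1}(v_0) \cap \tilde S| = |\tilde S|$ and $|C| \geq 11$, some $v_0 \in C$ satisfies $|g^{-1}(v_0) \cap \tilde S| \leq |\tilde S|/11$; set $\tilde S'' = \tilde S \setminus g^{-1}(v_0)$, so $|\tilde S''| \geq (10/11)|\tilde S|$. For each of the $p$ subdivided stars $T_i$ in $H[R'']$, let $T_i'$ be the subdivided star obtained from $T_i$ by deleting every branch whose leaf lies in $\tilde S \setminus \tilde S''$, and call $T_i'$ \emph{good} if it retains at least $D/2 = (8ck\log k)^2$ branches. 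Since $pD \leq |\tilde S|$, the non-good trimmed stars together contain fewer than $p \cdot D/2 \leq |\tilde S|/2$ leaves, so the good ones collectively contain a set $S_T$ of at least $|\tilde S''| - |\tilde S|/2 \geq (9/22)|\tilde S|$ leaves, and every good star's centre has degree at least $(8ck\log k)^2$.

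We then apply \cref{lemma:startree_new} to $H' = G[L \cup R_T]$, where $L$ is the vertex union of the good trimmed stars, $S_P = R_P = \emptyset$, and $R_T = (C \setminus \{v_0\}) \cup (N' \setminus N(v_0)) \cup S_T$. Indeed, $R_T$ induces a tree in $G$: the path $C - v_0$ forms the spine, each $v \in N' \setminus N(v_0)$ is attached as a pendant to its unique $C$-neighbour (which lies in $C - v_0$), and each $s \in S_T \subseteq \tilde S''$ is attached as a pendant to $f(s) \in N' \setminus N(v_0)$; hence $S_T \subseteq L(R_T)$. The remaining hypotheses follow since the original $R$ has no edges to $C$ and its non-$S$ vertices have no edges to $N$, so there are no edges between $L \setminus S_T$ (the centres and internal branch-vertices of the trimmed stars) and $R_T \setminus S_T$. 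The lemma yields a vertex of $G$ of degree at least $\Omega(|S_T|/(k^4 \log k)) = \Omega(|S|/(D \cdot k^4 \log k)) = \Omega(|S|/(k^6 \log^3 k))$, contradicting the standing bound once $\delta_k$ is chosen sufficiently small in terms of $k$. The main subtlety is the double averaging required to retain a constant fraction of $\tilde S$, both when breaking the cycle $C$ at a single vertex $v_0$ (using $|C| \geq 11$) and when discarding trimmed stars whose degree has dropped below $D/2$.
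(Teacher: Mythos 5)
Your proof is correct and follows essentially the same route as the paper: average over the $\ge 11$ vertices of $C$ to find a cycle vertex whose second neighbourhood meets at most a $1/11$ fraction of the relevant leaves, delete it together with the corresponding branches, and apply the star--tree lemma with the trimmed stars as $L$ and the broken cycle plus $N'$ plus the surviving leaves as the tree side. The only (harmless) deviation is that you discard trimmed stars retaining fewer than $D/2$ branches so that every kept centre has large degree, whereas the paper simply notes that the \emph{average} centre degree stays at least $\tfrac{10}{11}D$, which is all \cref{lemma:startree_new} requires.
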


\begin{proof}

Assume for a contradiction that Case 2 does not apply, hence Case 1 applies.

\begin{figure}[htb]
 \centering
 \includegraphics[scale=0.9]{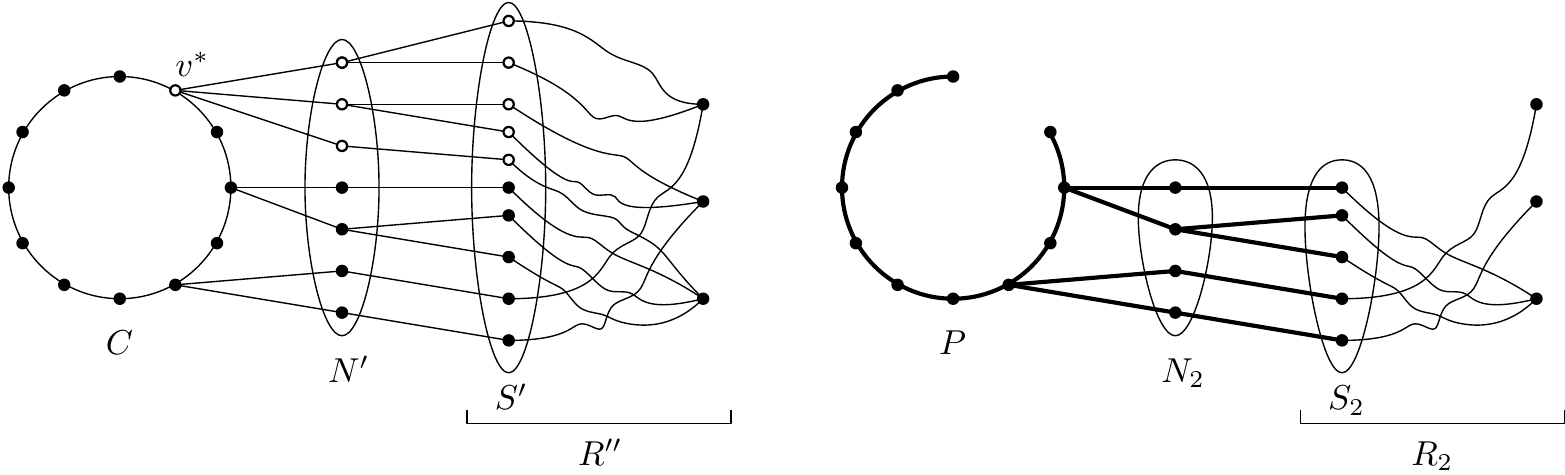}
 \caption{The graphs $G_1$ (left) and $G_2$ (right) in the proof of~\cref{cl:case2}.}
 \label{fig:startree}
\end{figure}

\medskip

Let $G_1$ be the subgraph of $G$ induced by $C\cup N'\cup R''$ (see Figure \ref{fig:startree}, left).
Since $|C|\ge 11$ and vertices of $C$ have disjoint second neighborhoods in $S'$, there exists a vertex $v^*\in C$ that sees at most $\frac{1}{11}|S'|$ vertices of $S'$ in its second neighborhood. If we remove from $G_1$ the vertex $v^*$, its neighborhood $N(v^*)\subseteq N'$, its second neighborhood $N^2(v^*)\subseteq S'$, and the corresponding branches of the subdivided stars of $R''$, we obtain a graph $G_2$ whose vertex set is partitioned into a path $P=C-v^*$, its neighborhood $N_2=N'-N(v^*)$, and the rest of the vertices $R_2$ (which includes the set $S_2=S'-N^2(v^*)$), with the property that each component of $G_2[R_2]$ is an $S_2$-clean subdivided star (see Figure \ref{fig:startree}, right). More importantly, \[|S_2|\ge \tfrac{10}{11}|S'|\ge \tfrac{10}{11}\cdot \tfrac1{32D} |S|\ge\tfrac1{36D} |S|,\]  and the average degree of the centers of the subdivided stars is at least $\tfrac{10}{11}D\ge (8ck \log k)^2$.


\medskip

Observe that $P\cup N_2\cup S_2$ induces a tree in $G_2$, such that all leaves of $G_2[P\cup N_2\cup S_2]$ except at most two (the two neighbors of $v^*$ on $C$) lie in $S_2$, and non leaves of the tree are not in $S_2$.
We can now apply~\cref{lemma:startree_new}
with $R= P\cup N_2\cup S_2$ and $L=R_2$. It follows that $G_2$ contains a vertex of degree at least $\Omega(\tfrac1{k^4\log k} |S_2|)= \Omega(\tfrac1{k^6\log^3 k} |S|) >\delta |S|$. Since $G_2$ is an induced subgraph of $G$, this contradicts Claim \ref{claim:Slinear}.\cqed
\end{proof}

\medskip

We denote the connected components of $H[R'']$ by $P_1, \ldots,P_\ell$, with $\ell\ge \tfrac1{64D}|S|$.

\begin{claim}\label{claim:smalldegree}
 There is a vertex $u^*$ in $C$ which has at least $\tfrac1{16 (8 ck \log k)^3}|S|$  endpoints of the paths $P_1,\ldots,P_\ell$ in its second neighborhood, where $c>0$ is the constant of \cref{lem:cycle-handles}. 
\end{claim}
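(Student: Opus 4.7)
The plan is to exhibit the vertex $u^*$ by applying \cref{lem:cycle-handles} to the cycle $C$ together with handles obtained by extending each path $P_i$ through its two $N'$-neighbors. For each $P_i$ with endpoints $a_i, b_i \in S'$, let $a_i', b_i' \in N'$ denote their unique $N$-neighbors, and let $u_i, v_i \in C$ denote the unique $C$-neighbors of $a_i', b_i'$; uniqueness in both cases follows from the girth assumption $\geq 11$. The handle $Q_i$ is then the walk $u_i - a_i' - a_i - \cdots - b_i - b_i' - v_i$, whose interior lies in $\{a_i', b_i'\} \cup V(P_i)$. The induced subgraph $H' = G[V(C) \cup \{a_i', b_i' : 1 \leq i \leq \ell\} \cup \bigcup_i V(P_i)]$ is $\mathcal{O}_k$-free (as an induced subgraph of $G$) and consists of $C$ together with these $\ell$ handles.

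The next step is to check that \cref{lem:cycle-handles} can be applied to $H'$ with the full set of $\ell$ handles. The vertex sets $V(P_i)$ and $V(P_j)$ of distinct paths are disjoint and pairwise non-adjacent, as they are distinct connected components of the forest $H[R'']$. The $N'$-vertices of different handles could coincide, but any coincidence such as $a_i' = a_j'$ forces $u_i = u_j$, since each vertex of $N$ has a unique $C$-neighbor; in other words, an $N'$-collision is automatically a $C$-endpoint collision. Hence the standard conflict-counting step in the proof of \cref{lem:cycle-handles} (each handle conflicts via its two $C$-endpoints with at most $2(d-1) + 4d \leq 6d$ others, where $d + 2$ denotes the maximum degree of $H'$) already absorbs these $N'$-collisions. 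A greedy selection therefore produces $\Omega(\ell/d)$ handles with pairwise disjoint and non-adjacent interiors, and the remainder of the lemma's argument (Jaeger's bound for feedback vertex sets of cubic multigraphs together with \cref{thm:EP}) yields $\ell \leq c \cdot d \cdot k \log k$.

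The final step is to translate this degree bound into a lower bound on $n_u$, the number of path-endpoints at distance $2$ from $u \in C$. The maximum degree $d + 2$ of $H'$ is attained either at some $u \in C$, where the degree is at most $2 + n_u$, or at some $v \in N'$, where the degree is $1 + n_v^{N'}$ with $n_v^{N'}$ the number of path-endpoints adjacent to $v$. Since every endpoint counted by $n_v^{N'}$ is at distance $2$ from the unique $C$-neighbor of $v$, we have $n_v^{N'} \leq n_u$ for that $u$, so in either case $d \leq \max_{u \in C} n_u$. Combining with $\ell \geq |S|/\bigl(128(8ck\log k)^2\bigr)$ from the preceding paragraph, and using the identity $128(8ck\log k)^2 \cdot ck \log k = 16(8ck\log k)^3$, we obtain
\[
  \max_{u \in C} n_u \;\geq\; \frac{\ell}{c \, k \log k} \;\geq\; \frac{|S|}{16(8ck\log k)^3},
\]
which produces the required vertex $u^*$.

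The main technical obstacle is the middle step: verifying that $N'$-vertex collisions among the handles do not inflate the conflict count beyond the $6d$ already controlled by the proof of \cref{lem:cycle-handles}. This rests entirely on the uniqueness of the $C$-neighbor of each vertex of $N$, ensured by the girth assumption, which forces every $N'$-collision to be a $C$-endpoint collision already accounted for.
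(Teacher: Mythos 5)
Your overall strategy --- attach each $P_i$ to $C$ as a handle through its two $N'$-neighbors and invoke \cref{lem:cycle-handles} --- is the same as the paper's, but the middle step as you have written it does not go through. The conflict bound $2(d-1)+4d\le 6d$ in the proof of \cref{lem:cycle-handles} rests on the fact that in a graph of maximum degree $d+2$ at most $d$ handles can be attached at any given vertex of $C$, because each handle attached at $x\in C$ consumes a distinct edge at $x$. In your uncontracted graph $H'$ this fails: all the handles whose paths have an endpoint adjacent to the same vertex $a\in N'$ enter $C$ through the single edge between $a$ and its unique $C$-neighbor $u$. Hence the number of handles sharing the $C$-endpoint $u$ is (up to a factor $2$) equal to $n_u$, which is bounded only by $\bigl(\deg_{H'}(u)-2\bigr)\cdot\max_{a\in N'}\bigl(\deg_{H'}(a)-1\bigr)$, i.e.\ by roughly $d^2$ rather than $d$ when $d+2=\Delta(H')$. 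A single handle can therefore conflict with far more than $6d$ others, the greedy selection only yields $\Omega(\ell/d^2)$ pairwise independent handles, and the inequality $\ell\le c\,d\,k\log k$ with $d=\Delta(H')-2$ is not justified. Your observation that every $N'$-collision is a $C$-endpoint collision is true but does not repair this: it is exactly the $C$-endpoint collisions that are undercounted. And vertices of $N'$ with many neighbors among the path endpoints are very much in play, since the only a priori degree bound on $N$ at this stage is $\tfrac1{8k}|S|$.

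The claim is salvageable along your lines by running the conflict count with the parameter $n^*:=\max_{u\in C} n_u$ in place of $\Delta(H')-2$: each handle then conflicts with at most $6n^*$ others, the Jaeger/Erd\H{o}s--P\'osa part of the argument is unaffected, and one gets $\ell\le c\,n^*\,k\log k$, from which your final arithmetic gives the claim. This is precisely what the paper arranges by first contracting each vertex of $N'$ into its unique neighbor on $C$ (forming the graph $G_4$ from $G_3$): after the contraction the degree of a $C$-vertex is at most $2+n_u$, the handles have pairwise disjoint and non-adjacent interiors, and \cref{lem:cycle-handles} applies verbatim with $d=\max_{u\in C} n_u$.
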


\begin{proof}
Assume for the sake of contradiction that each vertex of $C$ has less than $\tfrac1{16 (8 ck \log k)^3}|S|$  endpoints of the paths $P_1,\ldots,P_\ell$ in its second neighborhood. 

\medskip
\begin{figure}[htb]
 \centering
 \includegraphics[scale=0.9]{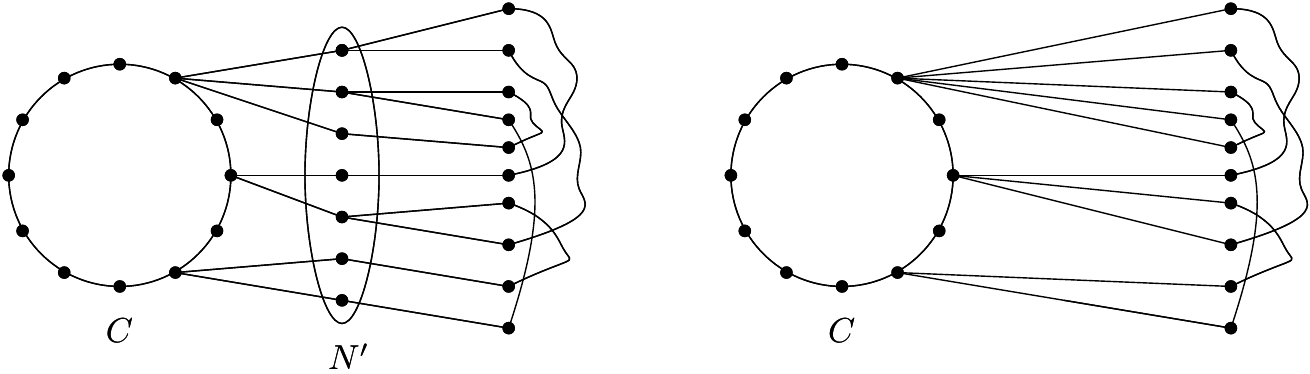}
 \caption{The graphs $G_3$ (left) and $G_4$ (right) in the proof of~\cref{claim:smalldegree}.}
 \label{fig:starpath1}
\end{figure}

Let $G_3$ be subgraph of $G$ induced by $C\cup N'$ and $\bigcup_{i=1}^\ell V(P_i)$ (see Figure \ref{fig:starpath1}, left), and let $G_4$ be the graph obtained from $G_3$ by contracting each vertex of $N'$ with its unique neighbor in $C$ (i.e., $G_4$ is obtained from $G_3$ by contracting disjoint stars into single vertices), see Figure \ref{fig:starpath1}, right. Note that since $G$ is $\mathcal{O}_k$-free, $G_3$ and $G_4$ are also $\mathcal{O}_k$-free (from the structural properties of $C$, $N$, and $S$, each cycle in $G_4$ can be canonically associated to a cycle in $G_3$, and for any set of independent cycles in $G_4$, the corresponding cycles in $G_3$ are also independent). By our assumption, each vertex of $C$ in $G_4$ has degree at most $\tfrac1{16 (8 ck \log k)^3}|S|+2$, and $G_4$ consists of the cycle $C$ together with $\ell\ge \tfrac1{64D}|S|$ paths whose endpoints are in $C$ and whose internal vertices are pairwise disjoint and non-adjacent. By \cref{lem:cycle-handles}, it follows that \[\tfrac1{64D}|S| < \ell\le c \cdot \tfrac1{16 (8 ck \log k)^3}|S|\cdot k \log k,\] and thus $D > 2 (8ck \log k)^2$, which contradicts the definition of $D=2(8ck \log k)^2$.\cqed
\end{proof}




    
    
    
    
    

\medskip

 \begin{claim}\label{cl:final}
 If the vertices in $N[u^*]$ have average degree at least $(8ck\log k)^2$ in $S'$, then $G$ contains a vertex of degree at least $\delta |S|$. 
\end{claim}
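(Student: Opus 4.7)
Let $V := N'\cap N(u^*)$ with $m := |V|$. Among the vertices of $N[u^*]$, only those in $V$ contribute to the ``average degree in $S'$'': the vertices in $N[u^*]\cap C$ have no $R$-neighbors, and the vertices in $N(u^*)\cap(N\setminus N')$ have no $S'$-neighbors by definition of $S'$. Hence the hypothesis gives $|S^*|\ge |N[u^*]|(8ck\log k)^2 \ge m(8ck\log k)^2$, where $S^* := N^2(u^*)\cap S' = \bigcup_{v\in V}(N(v)\cap S')$, and combining with \cref{claim:smalldegree} yields $|S^*|\ge \tfrac{|S|}{16(8ck\log k)^3}$.

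The plan is to apply the star-path lemma (\cref{lemma:starpath_new}) to a suitable induced subgraph of~$G$. The star side will be the pairwise vertex-disjoint stars $\{v_j\}\cup (N(v_j)\cap S^*)$ for $v_j\in V$ --- disjoint because distinct vertices of $S$ have distinct unique $N$-neighbors --- while the path side will consist of paths $P_i$ whose endpoints lie in $S^*$. Split these into $\mathcal{P}_1$ (both endpoints in $S^*$) and $\mathcal{P}_2$ (exactly one), so $2|\mathcal{P}_1|+|\mathcal{P}_2| = |S^*|$ forces $|\mathcal{P}_1|\ge |S^*|/4$ or $|\mathcal{P}_2|\ge |S^*|/2$. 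In the ``$\mathcal{P}_1$-dominant'' subcase, set $L := \bigcup_{P\in \mathcal{P}_1} V(P)$, $R := V\cup S^*_1$ with $S^*_1$ the endpoints of the $\mathcal{P}_1$-paths, and $H^\dagger := G[L\cup R]$. The hypotheses of \cref{lemma:starpath_new} are all met: $L$ induces the paths of $\mathcal{P}_1$ with endpoints exactly $S^*_1$; $R$ induces disjoint stars with leaves exactly $S^*_1$; the independence of $S^*_1$ follows from $S^*_1\subseteq S$; and the key condition ``no edges between $L\setminus S^*_1$ and $V$'' is automatic, since any internal vertex of a $\mathcal{P}_1$-path adjacent to $V\subseteq N'$ would itself lie in $S'$, contradicting the $S'$-cleanness ensured in \cref{cl:case2}. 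With $|S^*_1|\ge |S^*|/2 \ge m(8ck\log k)^2/2$, the star centers have average degree at least $(8ck\log k)^2/2 \ge 4ck\log k$, so \cref{lemma:starpath_new} produces a vertex of degree at least $\tfrac{|S^*_1|}{3f'(2,k)} = \Omega\bigl(|S|/(k^5\log^3 k)\bigr)$, which exceeds $\delta|S|$ for $\delta$ sufficiently small in terms of $k$, contradicting \cref{claim:Slinear}.

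In the alternative subcase $|\mathcal{P}_2|\ge |S^*|/2$, I would invoke the star-tree lemma (\cref{lemma:startree_new}) in place of \cref{lemma:starpath_new}: the star side remains the disjoint stars at the $v_j$'s, while the $R$ side becomes the disjoint union of the (possibly few) $\mathcal{P}_1$-paths, contributing to $S_P$, together with a single tree $T$ rooted at $u^*$ that traverses each $\mathcal{P}_2$-path end-to-end via the route $u^* - v_j - s - P - s'$, so that the $S'\setminus S^*$ endpoints $s'$ play the role of $S_T$ as leaves of $T$. The main obstacle I anticipate is the bookkeeping needed to assemble $T$ as a genuine acyclic induced subgraph of $G$ --- in particular avoiding extra cycle-creating edges among $R$-vertices across different $\mathcal{P}_2$-paths --- and verifying the ``no edges between $L\setminus S$ and $R\setminus S$'' condition via the same $S'$-cleanness argument as before. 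Once this is done, the same quantitative bounds produce a vertex of degree $\Omega(|S|/\operatorname{poly}(k,\log k))$, again contradicting \cref{claim:Slinear}.
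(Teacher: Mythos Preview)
Your setup and Case~1 are correct, and the plan of applying a star-versus-paths/tree lemma with stars centered at $V = N' \cap N(u^*)$ is exactly the paper's strategy. The gap is in Case~2, and it is structural rather than mere bookkeeping: you propose a tree $T$ ``rooted at $u^*$\dots\ via the route $u^* - v_j - s - P - s'$'' with the vertices $s' \in S' \setminus S^*$ playing the role of $S_T$. But this places the star centers $v_j \in V$ as internal vertices of $T \subseteq R$, which is incompatible with \cref{lemma:startree_new}: there the star centers lie in $L \setminus S = L \setminus R$, so they cannot appear in $R$ at all. Moreover $S_T \subseteq S = L \cap R$ must consist of star leaves, hence $S_T \subseteq S^*$; the vertices $s'$ are not adjacent to any $v_j$ and so cannot lie in $L$.

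The fix --- which is precisely the paper's argument --- is to build the tree on the \emph{opposite} side of the interface $S^*$. For each $\mathcal{P}_2$-path $P$ with endpoints $s \in S^*$ and $s' \in S' \setminus S^*$, the tree contains $P$, then the unique $N$-neighbor $w \in N' \setminus N(u^*)$ of $s'$, then the unique $C$-neighbor of $w$, all glued along the backbone path $C - u^*$. Now the $S^*$-endpoints $s$ are the $S_T$-leaves, while the $\mathcal{P}_1$-paths form $R_P$ with $S_P$ their endpoints. With this construction your case split becomes unnecessary: a single application of \cref{lemma:startree_new} handles $\mathcal{P}_1$ and $\mathcal{P}_2$ simultaneously and produces a vertex of degree $\Omega\bigl(|S^*|/(k^4 \log k)\bigr) > \delta |S|$. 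Acyclicity of the tree and the absence of edges between $L \setminus S^*$ and $R \setminus S^*$ follow from the standard structural properties of $C, N, S$ together with the $S'$-cleanness argument you already gave in Case~1.
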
 
\begin{proof}
The key idea of the proof of the claim is to consider the neighbors of $u^*$ as the centers of stars (L) in Claim~\ref{lemma:startree_new}. In order to do that, we consider the subgraph $G_5$ of $G$ induced by 
\begin{itemize}[noitemsep,nolistsep]
    \item the path $C-u^*$,
    \item $N(u^*)$ and the paths $P_i$ ($1\le i \le \ell$) with at least one endpoint in the second neighborhood $N^2(u^*)$ of $u^*$ (call these paths $P_1',\ldots,P_t'$), and
    \item the neighbors of the endpoints of the paths $P_1',\ldots,P_t'$ in $N$.
\end{itemize}
    All the components of $G_5-N(u^*)$ are either paths $P_i'$ with both endpoints in $N^2(u^*)$, or a tree whose leaves are all in $N^2(u^*)$ (except at most two leaves, which are the two neighbors of $u^*$ in $C$). See Figure \ref{fig:starpath2}, right, for an illustration, where the vertices of $N^2(u^*)$ are depicted with squares and the components of $G_5-N(u^*)$ are depicted with bold edges.
    
    \medskip
\begin{figure}[htb]
 \centering
 \includegraphics[scale=0.9]{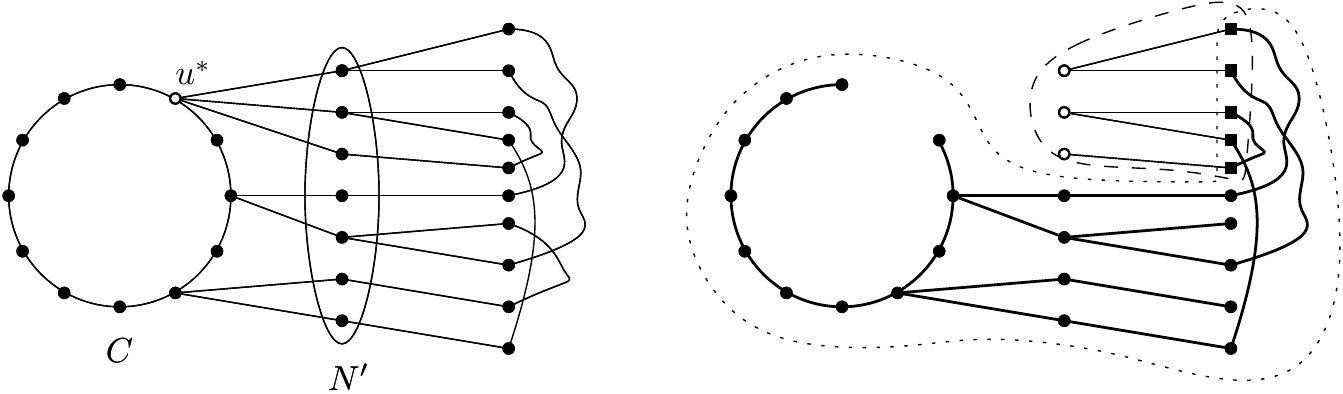}
 \caption{The graphs $G_3$ with the vertex $u^*$ (left) and the graph $G_5$ (right) in the proof of~\cref{cl:final}.}
 \label{fig:starpath2}
\end{figure}
    
    By considering the vertices of $N(u^*)$ and their neighbors in $S'$ as stars (whose centers, depicted in white in Figure \ref{fig:starpath2}, right, have average degree at least $(8ck\log k)^2$) we can apply Lemma \ref{lemma:startree_new}, and obtain a vertex of degree at least $\Omega(\tfrac1{k^4 \log k} |S'|)\ge \Omega(\tfrac1{k^6 \log^3 k} |S|)\ge\delta |S|$ in $G_5$ (and thus in $G$), which contradicts \cref{claim:Slinear}. \cqed
\end{proof}
Observe that if the vertices of $N(u^*)$ have average degree at most $(8ck\log k)^2$ in $S'$, then $u^*$ has degree at least $\tfrac1{16(8ck\log k)^5}|S|\ge\delta |S|$. If not, by \cref{cl:final}, $G$ also contains a vertex of degree at least $\delta |S|$. Both cases contradict \cref{claim:Slinear}, and this concludes the proof of Theorem~\ref{thm:richvertex}.
\hfill $\Box$


\subsection*{Acknowledgements}
 
This work was done during the Graph Theory Workshop held at
Bellairs Research Institute in March 2022. We thank the organizers and the other
workshop participants for creating a productive working
atmosphere. We also thank the anonymous reviewers of the conference and journal versions of the paper for their comments and suggestions.

\bibliographystyle{alpha}
\bibliography{coolnew}

\end{document}